\definecolor{bleuf}{rgb}{0.1,0.1,0.6}
\newtheorem{theorem}{Theorem}[section]
\newtheorem{lem}[theorem]{Lemma}
\newtheorem{proposition}[theorem]{Proposition}
\newtheorem*{bibli}{Theorem}
\theoremstyle{definition}
\newtheorem{example}[theorem]{Example}
\newtheorem{remark}[theorem]{Remark}
\numberwithin{equation}{section}
\numberwithin{theorem}{section}
\DeclareMathOperator{\NN}{\mathbb{N}}
\DeclareMathOperator{\ZZ}{\mathbb{Z}}
\DeclareMathOperator{\RR}{\mathbb{R}}
\DeclareMathOperator{\Pd}{\mathbb{P}}
\DeclareMathOperator{\Ed}{\mathbb{E}}
\DeclareMathOperator{\un}{1\hspace{-.29em}I}
\renewcommand{\L}{\mathcal{L}}
\DeclareMathOperator{\diag}{diag}
\DeclareMathOperator{\card}{card}
\DeclareMathOperator{\leb}{Leb}
\newcommand\Psub[2]{\mathcal{P}_{#2}(#1)}
\newcommand\enum[2]{\{#1,\ldots,#2\}}
\newcommand\enu[1]{\{1,\ldots,#1\}}
\newcommand\ld{\ell}
\newcommand\La{\mathcal{DL}_{\alpha}}
\newcommand\Ca{\mathcal{C}_{\alpha}}
\newcommand\DL{\mathcal{DL}}
\newcommand\DLp{\mathcal{D}\dot{\mathcal{L}}}
\newcommand\lp{\dot{\ell}}
\newcommand\mup{\dot{\mu}}
\DeclareMathOperator{\Per}{Per}
\DeclareMathOperator{\LL}{L}
\newcommand\inter{\;\text{intersects}\;}
\newcommand\ninter{\;\text{does not intersect}\;}
\renewcommand{\epsilon}{\varepsilon}
\begin{document}
\title[Markovian loop clusters]{Markovian loop clusters on graphs}
\author{Yves Le Jan}
\date{April, 2013}
\address{Universit\'e Paris-Sud, Laboratoire de Math\'ematiques, UMR 8628, Orsay
{F-91405}; CNRS, Orsay, {F-91405}; IUF.}
\email{yves.lejan@math.u-psud.fr}
\author{Sophie Lemaire}
\address{Universit\'e Paris-Sud, Laboratoire de Math\'ematiques, UMR 8628, Orsay
{F-91405}; CNRS, Orsay, {F-91405}.}
\email{sophie.lemaire@math.u-psud.fr}
\keywords{Poisson point process of loops, random walk on graphs, coalescent process, percolation, connectivity, renewal process.}
\subjclass[2000]{Primary 60C05. Secondary 60J10, 60G55, 60K35, 05C40}

\begin{abstract}
We study the loop clusters induced by Poissonian ensembles of Markov loops on a finite or countable graph (Markov loops can be viewed as excursions of Markov chains with a random starting point, up to re-rooting). Poissonian ensembles are seen as a Poisson point process of loops indexed by `time'. The evolution in time of the loop clusters defines a coalescent process on the vertices of the graph.  After a description of some general properties of the coalescent process, we address several aspects of the loop clusters  defined by a simple random walk killed at a constant rate on three different graphs:  the integer number line $\mathbb{Z}$, the integer lattice $\mathbb{Z}^d$ with $d\geq 2$ and the complete graph. These examples show the relations  between Poissonian ensembles of Markov loops and other models: renewal process, percolation and random graphs.
\end{abstract}
\maketitle
\section*{Introduction}
The notion of Poissonian ensembles of Markov loops (loop soups) was introduced
by Lawler and Werner in \cite{lw}  in the context of two
dimensional  Brownian motion (it already appeared informally 
in \cite{sym}): the loops of a Brownian loop soup on a domain $D\subset \mathbb{C}$  are the points of a Poisson point process with intensity $\alpha \mu$ where $\alpha$ is a positive real and $\mu$ is the Brownian loop  measure on $D$. Loop clusters induced by a Brownian loop soup were used to give a construction of the conformal loop ensembles (CLE) in \cite{WernerCRAS03} and \cite{shw}. They are defined as follows: \emph{two loops $\ell$ and $\ell'$ are said to be in the same cluster if one can find a finite chain of loops $\ell_0,\ldots,\ell_n$ such that $\ell_0=\ell$, $\ell_n=\ell'$ and for all $i\in \enu{n}$, $\ell_{i-1}\cap \ell_i\neq \emptyset$}.  
Poissonian ensembles of Markov loops can also be defined on  graphs. They were studied in details in  \cite{stfl} and \cite{lawlerlimic}. In particular  Poissonian ensembles of loops on the integer lattice $\ZZ^2$ induced by  simple (nearest neighbor) random walks give a discrete version of Brownian loop soup (see \cite{LawlerFerreras}). 

The aim of this paper is to study  loop clusters  induced by Markov loop ensembles on graphs.  The facts
presented here, which are built on the results presented in \cite{stfl}, are more elementary than in the Brownian loop soup theory, but they point out  that Poissonian ensembles of Markov loops and related partitions are of more
general interest. The examples we develop show relations with several
theories: coalescence, percolation and random graphs.

In Section 1, we recall the different objects needed to define the Markovian loop clusters on a graph and the coalescent process induced by the partition of vertices they define. In Section 2, we state general properties of the clusters and establish some formulae useful to study the semigroup of the coalescent process. In the last three sections we address several aspects of the loop clusters induced by a simple random walk killed at a constant rate $\kappa$ on different graphs: 
\begin{itemize}
\item On  $\ZZ$, the loop cluster model reduces to a renewal process. We establish a convergence result by rescaling space by $\sqrt{\epsilon}$,   killing rate by $\epsilon$ and let $\epsilon$ converge to $0$. 
\item The loop cluster model can be seen as a percolation model with two parameters $\alpha$ and $\kappa$. Bernoulli percolation appears as a limit if $\alpha$ and $\kappa$ tend to $+\infty$ so that $\frac{\alpha}{\kappa^2}$ converges to a positive real.  We show that a non-trivial percolation threshold  occurs for the loop cluster model on  the  lattice $\ZZ^{d}$ with $d\geq 2$. 
\item As a last example, we consider the complete graph. We give a simple construction of the coalescent process on the complete graph and we deduce a  construction of a coalescent process on the interval $[0,  1]$. Letting the size of the graph increase to $+\infty$, we determine the asymptotic distribution of the coalescence time: it appears to be essentially the same as  in the Erd\"os-R\'enyi random graph model (see \cite{ErdosRenyi59}) though the lack of independence makes the proof significantly  more difficult. 
\end{itemize}
\section{Setting}
We consider a finite or countable simple graph $\mathcal{G}=(X,\mathcal{E})$ endowed with positive
conductances $C_{e},\ e\in \mathcal{E}$ and a positive measure $\kappa=\{\kappa_{x},\ x\in X\}$ (called the \emph{killing measure}).  
 We  denote by $C_{x,y}$ the
conductance of the edge between vertices $x$ and $y$ and set $\lambda_{x}=\sum_{y\in X, \{x,y\}\in\mathcal{E}}C_{x,y}+\kappa_{x}$   for every $x\in X$. The conductances and the killing measure $\kappa$  induce a sub-stochastic matrix $P$: $P_{x,y}=\frac{C_{x,y}}{\lambda_x}\un_{\{\{x,y\}\in\mathcal{E}\}}$, $\forall x,y\in X$.  
We assume that $P$ is irreducible. 
\subsection{Loop measure}
A discrete based loop $\ell$ of length $n\in\NN^*$ on $\mathcal{G}$ is defined as an element of $X^n$; it can be extended to an infinite periodic sequence.  
Two based loops of length $n$  are said equivalent if they only differ by a shift of their coefficients ({\it i.e.} the based loops of length $n$, $(x_1, \ldots, x_n)$ is equivalent to the based loop of length $n$ $(x_i,\ldots,x_{n},x_1,\ldots,x_{i-1})$ for every $i\in\{2,\ldots,n\}$). A discrete loop is an equivalent class of based loops for this equivalent relation (an example is drawn in Figure \ref{figloop}).   Let $\DL(X)$ (resp. $\DLp(X)$) denote  the set of discrete  loops (resp. discrete based loops) of length at least 2 on $X$.  We associate to each based loop $\ell=(x_1,\ldots,x_n)$ of length $n\geq 2$ the weight $\mup(\ell)=\frac{1}{n}P_{x_1,x_2}P_{x_2,x_3}\ldots P_{x_n,x_1}$. This defines a measure $\mup$ on $\mathcal{DL}(X)$ which is invariant by the shift and therefore  induces a measure $\mu$ on $\DL(X)$.

\begin{figure}[hbt]
\fbox{%
\begin{minipage}{0.4\textwidth}
\centerline{\includegraphics[scale=0.45]{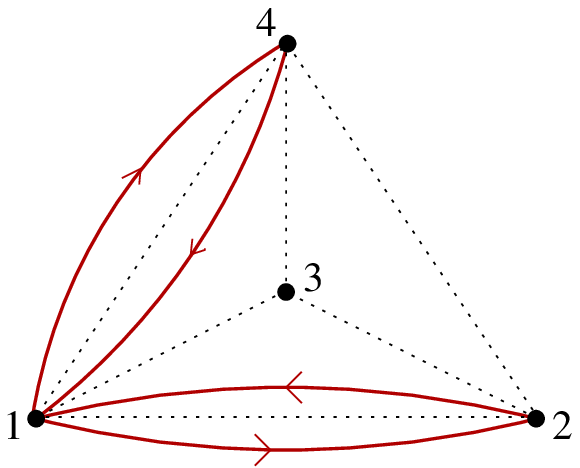}}
\end{minipage}
 \begin{minipage}{0.5\textwidth}
{\small  Representation of a loop $\ell$ of length 4 on the complete graph $K_4$ corresponding to the equivalence class of the based loop ${\lp=(4,1,2,1)}$; the measure of this loop is
\[\mu(\ell)=P_{4,1}P_{1,2}P_{2,1}P_{1,4}.\] }
\end{minipage}%
}
\caption{\label{figloop}}%
\end{figure}

\begin{remark}[\emph{Doob's $h$-transform}]
If $h$ is a  positive function on $X$ such that $(P-I)h\leq 0$,  a new set of conductances $C^{\{h\}}$ and a new killing measure $\kappa^{\{h\}}$ can be defined as follows: 
$C^{\{h\}}_{x,y}=h(x)h(y)C_{x,y}$ for every $\{x,y\}\in \mathcal{E}$ and $\kappa^{\{h\}}_x=h(x)[(I-P)h](x)\lambda_x$ for every $x\in X$. This modification corresponds to the Doob's $h$-transform: the associated transition matrix $P^{\{h\}}$ verifies $P^{\{h\}}_{x,y}=\frac{h(y)}{h(x)}P_{x,y}$, $\forall x,y\in X$. It is a self-adjoint operator on $\LL^2(h^2\lambda)$ since ${P^{\{h\}}=T_h^{-1}PT_{h}}$ with $T_h:\begin{array}{lcl}\LL^2(h^2\lambda)&\rightarrow& \LL^2(\lambda)\\
f&\mapsto& hf       \end{array}.$
The loop measure $\mu$ is invariant 
under Doob's $h$-transform. \\
The loop measure can be defined without assuming that $P$ is a substochastic matrix. A  weaker condition would be to assume that $P$ is a positive and contractive matrix. Taking such matrices does not extend the set of loop measures.  Indeed,  Perron-Frobenius theorem states that if $Q$ is a positive and irreducible matrix and if its  spectral radius $\rho(Q)$ is smaller or equal to 1, then there exists a positive function $h$ on $X$ such that $Ph=\rho(Q)h$.  The $h$-transform of $Q$ is then a substochastic matrix.   
\end{remark}
\subsection{Poisson loop sets}
Let $\mathcal{DP}$ be a Poisson point process with intensity $\leb\otimes
\mu$ defined on $\RR_{+}\otimes\DL(X)$. For $\alpha\geq 0$, let $\DL_{\alpha}$ denote the projection of the set ${\mathcal{DP}\cap([0,  \alpha]\times\DL(X))}$
on $\DL(X)$;  $(\DL_{\alpha})_{\alpha\geq 0}$ is an increasing family of loop
sets which has stationary independent increments.
It coincides with the family  of non-trivial discrete loop sets induced by the Poisson Point process of continuous-time loops defined in  \cite{stfl}. 
\subsection{Coalescent process}
An edge $e\in\mathcal{E}$ is said to be \emph{open at time $\alpha$} if $e$ is traversed by at least one loop of $\mathcal{DL}_{\alpha}$. The set of open edges  defines a subgraph $\mathcal{G}_{\alpha}$ with vertices $X$. The connected components of $\mathcal{G}_{\alpha}$  define a partition of $X$ denoted by $\mathcal{C}_{\alpha}$. The
elements of the partition $\mathcal{C}_{\alpha}$ are the loop clusters defined
by $\mathcal{DL}_{\alpha}$ (an example is drawn in Figure \ref{figcluster}). This paper is devoted to the study of $\mathcal{C}_{\alpha}$. \\
\begin{figure}[ht]
\fbox{%
 \begin{minipage}{.25\textwidth}
\begin{center} \includegraphics[scale=0.38]{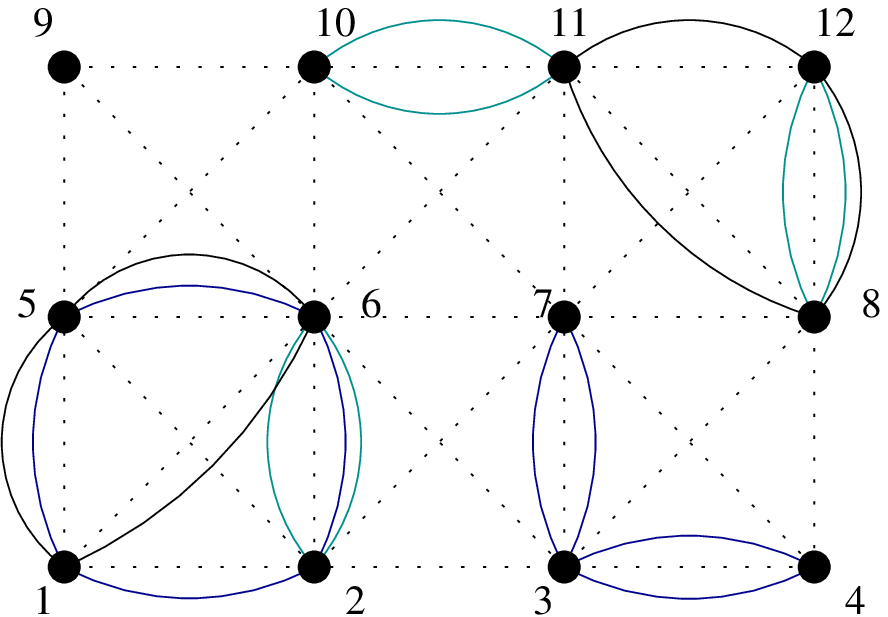} \\
 {\small $\mathcal{DL}_{\alpha}$} 
\end{center}
\end{minipage}$\rightarrow$
 \begin{minipage}{.25\textwidth}
 \begin{center}  \includegraphics[scale=0.38]{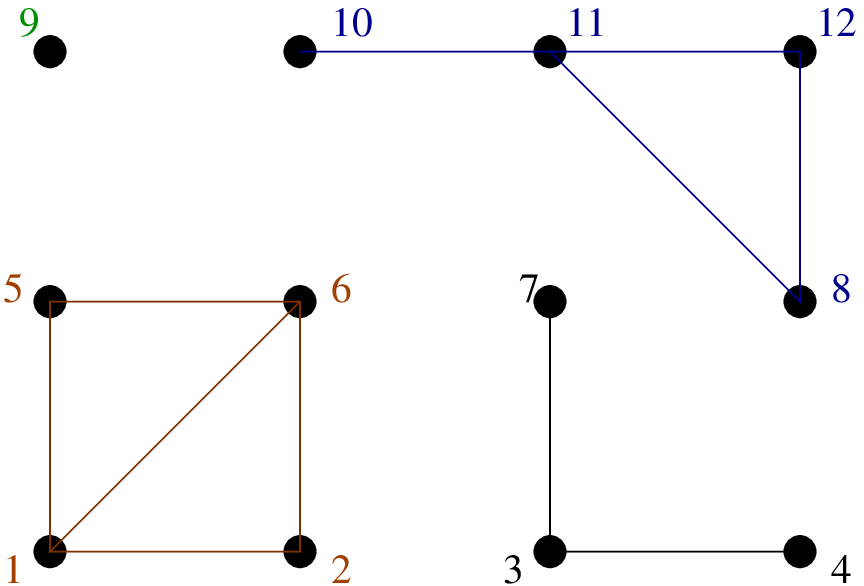} \\
 {\small $\mathcal{G}_{\alpha}$} 
\end{center}
\end{minipage}
$\rightarrow$\quad
\begin{minipage}{.32\textwidth}
 $\mathcal{C}_{\alpha}$ is  a partition with 4 blocks: $\{1,2,5,6\}, \{3,4,7\},$ \\ $\{8,10,11,12\},\{9\}$.
\end{minipage}}
\caption{\label{figcluster}Example of a loop set $\mathcal{DL}_{\alpha}$ at time $\alpha$ on a finite graph $\mathcal{G}$, the subgraph $\mathcal{G}_{\alpha}$ defined by the open edges and the partition $\mathcal{C}_{\alpha}$ induced by its loop clusters (the dotted-lines on the left figure represent the  edges of $\mathcal{G}$; $\mathcal{DL}_{\alpha}$ consists  of three loops of length 2, two loops of length 3 and two loops of length~4.) }
\end{figure}
\begin{remark}
 If $A$ is a subset of $X$, let us define the $\DL_{\alpha}$-neighborhood of $A$: $$\tau_{\alpha}(A)=A\cup\{x\in X,\; \exists \ell\in \DL_{\alpha} \text{ visiting } A \text{ and } x\}.$$

Given any $(x,y)\in X^2$, set $x\underset{\alpha}{\sim} y \text{ if and only if } \exists k\in \NN^*\; \text{ such that } y\in\tau^{k}_{\alpha}(\{x\})$ (in the example drawn figure \ref{figcluster}, $\tau_{\alpha}(\{10\})=\{10,11\}$ and for every $k\geq 2$, $\tau^{k}_{\alpha}(\{10\})=\tau_{\alpha}(\{10,11\})=\{8,10,11,12\}$ for instance). 
This defines  an equivalence relation and the associated partition is $\mathcal{C}_{\alpha}$. 
\end{remark}
\section{General properties of discrete loop clusters}
\subsection{The distribution of the set of primitive discrete loops}
Positive integer powers of a based discrete loops is defined by concatenation: if ${\ell=(x_1,\ldots,x_n)}$ is a based discrete loop of length $n$ then $[\ell]^1=\ell$, $[\ell]^{2}$ is the discrete based loop $(x_1,\ldots,x_n,x_1,\ldots,x_n)$ and so on. As the $m$-th power of equivalent based loops are also equivalent, powers of discrete loops are well-defined. 
A discrete loop $\xi$ is said to be \emph{primitive} if  there is no integer $m\geq 2$ and no discrete based loop $\ell$ such that $\xi$ is the equivalent class of $[\ell]^{m}$; 
Any  discrete loop $\xi\in\mathcal{DL}(X)$ can be represented 
as a power of a unique primitive discrete loop 
 denoted by $\pi\xi$. Let $\mathcal{PL}(X)$ denote the set of 
 primitive discrete loops $X$ of length at least 2 and let $\mathcal{PL}_{\alpha}$ denote the set of primitive discrete
loops defined by $\mathcal{DL}_{\alpha}$. Clearly $\mathcal{C}_{\alpha}$
depends only on $\mathcal{PL}_{\alpha}$. 
\begin{proposition}\label{primitivedistrib}
 The  probability distribution of $\mathcal{P}\L_{\alpha}$ is a product measure $\nu$ on ${\{0,1\}^{\mathcal{PL}(X)}}$ 
defined by \[\nu(\omega,\ \omega_{\eta}=1)=1-(1-\mu(\eta))^{\alpha}.\]
\end{proposition}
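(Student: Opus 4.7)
The plan is to exploit the fact that every discrete loop $\xi\in\DL(X)$ has a unique primitive root $\pi\xi$, so $\mathcal{PL}(X)$ partitions $\DL(X)$ into disjoint families $\{[\eta]^k, k\geq 1\}$, one family per $\eta\in\mathcal{PL}(X)$. Because $\mathcal{DP}$ is a Poisson point process with intensity $\leb\otimes\mu$, its restrictions to disjoint measurable subsets of $\RR_+\times\DL(X)$ are independent. Applying this to the disjoint sets $[0,\alpha]\times\{[\eta]^k,\,k\geq 1\}$ indexed by $\eta\in\mathcal{PL}(X)$ shows that the events $\{\eta\in\mathcal{PL}_\alpha\}$ are mutually independent, yielding the claimed product structure. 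It therefore remains to compute the marginal probability for a fixed $\eta$.

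For a fixed primitive loop $\eta$, the event $\{\eta\in\mathcal{PL}_\alpha\}$ is the event that at least one loop of the form $[\eta]^k$ belongs to $\DL_\alpha$. The number of such loops in $\DL_\alpha$ is Poisson distributed with parameter $\alpha\sum_{k\geq 1}\mu([\eta]^k)$, so
\[
\Pd(\eta\in\mathcal{PL}_\alpha)=1-\exp\!\bigl(-\alpha\sum_{k\geq 1}\mu([\eta]^k)\bigr).
\]
The task reduces to evaluating $\sum_{k\geq 1}\mu([\eta]^k)$ in terms of $\mu(\eta)$.

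The key computational step is to count the based representatives of $[\eta]^k$. Writing $\eta$ as the equivalence class of a based loop $\lp=(x_1,\ldots,x_n)$ of length $n$, the primitivity of $\eta$ means that the cyclic shift acts freely on $\lp$, so the equivalence class of $\lp$ consists of exactly $n$ based loops, each of $\mup$-weight $\frac{1}{n}P_{x_1,x_2}\cdots P_{x_n,x_1}$, giving
\[
\mu(\eta)=P_{x_1,x_2}\cdots P_{x_n,x_1}.
\]
The based loop $[\lp]^k$ has length $kn$ but, again by primitivity of $\eta$, its cyclic orbit has size exactly $n$ (shifts by multiples of $n$ are trivial); each representative has $\mup$-weight $\frac{1}{kn}(P_{x_1,x_2}\cdots P_{x_n,x_1})^k$, so that
\[
\mu([\eta]^k)=\frac{1}{k}\,\mu(\eta)^k.
\]
This identification of the stabilizer of $[\lp]^k$ under the cyclic shift is the main (if modest) obstacle, and is exactly where the definition of ``primitive'' is used.

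Summing the geometric-like series then gives $\sum_{k\geq 1}\mu([\eta]^k)=-\log(1-\mu(\eta))$ (the series converges because the irreducibility of the sub-stochastic $P$ forces $\mu(\eta)<1$ for any primitive loop; if equality held, the formula $1-(1-\mu(\eta))^\alpha=1$ still matches trivially). Substituting back yields
\[
\Pd(\eta\in\mathcal{PL}_\alpha)=1-\exp\!\bigl(\alpha\log(1-\mu(\eta))\bigr)=1-(1-\mu(\eta))^\alpha,
\]
which, combined with the independence established above, proves the proposition.
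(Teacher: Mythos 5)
Your proof is correct and follows essentially the same route as the paper: partition $\DL(X)$ into the disjoint families $\{[\eta]^k,\,k\geq 1\}$ to get independence from the Poisson structure, then compute the marginal via $\mu([\eta]^k)=\frac{1}{k}\mu(\eta)^k$ and sum the logarithmic series. The only difference is that you spell out the orbit-counting verification of $\mu([\eta]^k)=\frac{1}{k}\mu(\eta)^k$, which the paper states without proof; this is a welcome but minor elaboration.
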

\begin{proof}
If $\eta_{1}, \ldots, \eta_{r}$ are  distinct primitive discrete loops, the  sets 
\[L_i=\{\ell\in \mathcal{DL}(X),~\pi\ld=\eta_i\},\; i=1,\ldots, r\] are disjoint, hence the $r$ events 
$E_i=\{\exists \ell\in \mathcal{DL}_{\alpha}\;\text{such that}\;\pi \ld=\eta_i\}$ for ${i\in\{1, \ldots, r\}}$ 
are independent. Therefore, the law of $\mathcal{PL}_{\alpha}$ is a product measure on $\{0,1\}^{\mathcal{PL}(X)}$ 
and for every $\eta\in \mathcal{PL}(X)$, \[\Pd(\eta\in \mathcal{PL}_\alpha)=\Pd(\exists \ell\in
\mathcal{DL}_{\alpha},\;\pi \ld=\eta)=1-\exp(-\alpha\sum_{m=1}^{+\infty}\mu(\ell, \ld=[\eta]^m)). \]
As $\mu([\eta]^m)=\frac{1}{m}\mu(\eta)^m$ for every $m\in\NN^*$, we deduce that\[\Pd(\eta\in \mathcal{PL}_\alpha)=1-\exp(\alpha\log(1-\mu(\eta))).\] 
\end{proof}

It follows from Proposition \ref{primitivedistrib} that the Harris inequality (and also the B-K inequality) holds on
increasing events (see e.g. \cite{WernerBook}). In particular let us say that \emph{a
subset $A$ of $X$ is connected at time $\alpha$} if it is contained in a cluster of
$\mathcal{C}_{\alpha}$. Then by the Harris inequality, 
\begin{multline*}
\Pd(A\;\text{and}\;B\;\text{are both connected at time}\;\alpha)\\\geq
\Pd(A\;\text{is connected at time}\;\alpha)\Pd(B\;\text{is connected at time}\;\alpha). 
\end{multline*}
In a similar way, we say that \emph{a subgraph is open at time $\alpha$} if all its edges are
traversed by jumps of loops in $\mathcal{DL}_{\alpha}$. Then the same inequality holds.\\
These inequalities can be extended to any number of increasing events. In
particular, 
\begin{itemize}
 \item For every partition $\pi=(B_i)_{i\in I}$ of $X$ into non-empty blocks,  the probability that  $\mathcal{C}_{\alpha}$ is a coarser\footnote{Given two partitions $\sigma$ and $\pi$ of $X$, $\sigma$ is said to be \emph{coarser} than $\pi$ (denoted by $\sigma \succeq \pi$) or $\pi$ is said to be \emph{finer} than $\sigma$ (denoted by  $\pi \preceq \sigma$) if every block of $\pi$ is a subset of a block of $\sigma$. } partition than $\pi$ satisfies:
\[\mathbb{P}(\mathcal{C}_{\alpha}\succeq\pi)\geq\prod_{i\in I}\mathbb{P}(B_{i}\;\text{is connected at time}\;\alpha).\]
\item If $\mathcal{F}\subset \mathcal{E}$, then $\Pd(\mathcal{F} \text{ is open at time }\alpha)\geq \underset{e\in\mathcal{F}}{\prod}\Pd(e\; \text{is open at time}\;\alpha)$. 
\end{itemize}

\subsection{The transition rate of the coalescent process}
 The evolution in $\alpha$ of $\mathcal{C}_{\alpha}$ defines a partition-valued Markov chain. 
Let $\pi$ be a partition of $X$ into non-empty blocks $\{B_i,\ i\in I\}$. From state $\pi$, the only possible transitions are to a partition $\pi^{\oplus J}$ obtained by merging  blocks indexed by some subset $J$ of $I$ to form one block $B_J=\cup_{j\in J}B_j$ and leaving all other blocks unchanged.  The transition rate from  $\pi$ to   $\pi^{\oplus J}$ is
\begin{align}
\label{eqtransrate}
\tau_{\pi,\pi^{\oplus J}}=&\mu(\ell,\; \forall j\in J,\; \ell \inter B_j\;\text{and}\; \forall u\not\in J,\; \ell \ninter B_u)\\
=&\sum_{k\geq 2}\frac{1}{k}\sum_{(x_1,\ldots,x_k)\in W_k(B_j,\;j\in J)}P_{x_1,x_2}\ldots P_{x_{k},x_{1}}\notag
\end{align}
where $W_k(B_j,\; j\in J)$ is the set of $k$-tuples of elements of $B_J$ having at least one element that belongs to each block $B_j,\; j\in J$: \[W_k(B_j,\;j\in J)=\{(x_1,\ldots,x_k)\in(\cup_{j\in J}B_j)^k,\ \forall j\in J,\; x_u\in B_j \;\text{for some}\;u\}.\]
If the graph $\mathcal{G}$ is finite, these transition rates can be expressed with  determinants of Green's functions of subgraphs.  
In order to describe the formula, let us introduce some notations; Let $G$ denote the Green's function of $\mathcal{G}$: $G=(\lambda I-C)^{-1}$ where $\lambda=(\lambda_x)_{x\in X}$, $I$ is the  identity matrix and $C$ is the conductance matrix.  For every subset $F$ of vertices, set $F^c=X\setminus F$ and let $G^{(F)}$ denote the Green's function of the subgraph $(F,\mathcal{E}_{|F\times F})$ of $\mathcal{G}$: $ G^{(F)}$ is the inverse of the matrix  $(\lambda I-C)_{|{F\times F}}$. 
\begin{proposition}
Let us assume that  $\mathcal{G}=(X,\mathcal{E})$ is a finite graph. \\
For every partition $\pi=\{B_i,\; i\in I\}$ of $X$ and every subset $J$ of $I$ with at least two elements: 
\begin{equation}
\label{eqtransratecrible}
\tau_{\pi,\pi^{\oplus J}}=\sum_{I\subsetneqq J}(-1)^{|I|}\log(\det(G^{(\cup_{u\in J\setminus I}B_u)})).
\end{equation}
\end{proposition}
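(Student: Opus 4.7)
The plan is to start from the loop-integral expression (\ref{eqtransrate}) for $\tau_{\pi,\pi^{\oplus J}}$, rewrite the intersection condition via inclusion-exclusion, and identify each resulting loop measure as a log-determinant.

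First, the event that $\ell$ meets every $B_j$ with $j\in J$ and no $B_u$ with $u\notin J$ is the same as $\{\ell\subset F_J\}\cap\bigcap_{j\in J}\{\ell\inter B_j\}$, where $F_J:=\cup_{j\in J}B_j$. Setting $A_j=\{\ell\subset F_J:\ell\ninter B_j\}$ and observing that $\bigcap_{j\in I}A_j=\{\ell\subset F_{J\setminus I}\}$, Poincar\'e's inclusion-exclusion formula yields
$$\tau_{\pi,\pi^{\oplus J}}=\sum_{I\subsetneq J}(-1)^{|I|}\,\mu(\ell\subset F_{J\setminus I}).$$

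Second, for any finite $F\subset X$ I would compute $\mu(\ell\subset F)$ by summing $\mup$ over all based loops contained in $F$:
$$\mu(\ell\subset F)=\sum_{n\geq 2}\frac{1}{n}\,\mathrm{tr}\bigl((P^{(F)})^n\bigr)=-\log\det(I-P^{(F)}),$$
the $n=1$ term being zero because $\mathcal{G}$ is simple. Since $P^{(F)}=\diag(\lambda_{|F})^{-1}C_{|F\times F}$, one has $I-P^{(F)}=\diag(\lambda_{|F})^{-1}(\lambda I-C)_{|F\times F}$, hence $G^{(F)}=(I-P^{(F)})^{-1}\diag(\lambda_{|F})^{-1}$ and
$$\log\det G^{(F)}=\mu(\ell\subset F)-\sum_{x\in F}\log\lambda_x.$$

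Substituting the last identity into the inclusion-exclusion sum, the claimed formula reduces to the cancellation
$$\sum_{I\subsetneq J}(-1)^{|I|}\sum_{x\in F_{J\setminus I}}\log\lambda_x=0.$$
Swapping the order of summation and writing $u(x)\in J$ for the unique index with $x\in B_{u(x)}$, the coefficient of each $\log\lambda_x$ is $\sum_{I\subseteq J\setminus\{u(x)\}}(-1)^{|I|}$ (the constraint $I\subsetneq J$ is automatic once $u(x)\notin I$), and this vanishes because $|J|\geq 2$ forces $J\setminus\{u(x)\}$ to be non-empty.

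The only substantive step is the identity $\mu(\ell\subset F)=-\log\det(I-P^{(F)})$, which is the classical expansion of the log-determinant via traces of powers; the rest is bookkeeping, with the final cancellation relying on exactly the hypothesis $|J|\geq 2$ stated in the proposition.
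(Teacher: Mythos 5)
Your proposal is correct and follows essentially the same route as the paper: inclusion--exclusion over proper subsets of $J$ to reduce to measures of loops confined to unions of blocks, identification of $\mu(\ell\subset F)$ with $\log\bigl(\det(G^{(F)})\prod_{x\in F}\lambda_x\bigr)$, and the alternating-sum cancellation (the paper's identity \eqref{sumalt}, valid precisely because $|J|\geq 2$) to remove the $\lambda$ factors. The only difference is that you derive the log-determinant identity from the trace expansion $-\log\det(I-P^{(F)})$ rather than quoting it, which makes the argument a bit more self-contained but does not change the proof.
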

\begin{proof}
By the inclusion-exclusion principle, 
 \begin{align*}\tau_{\pi,\pi^{\oplus J}}= & \mu\Big(\ell,\;  \ell \ninter \underset{j\not\in J}{\cup}B_j\Big)\\&\qquad-\mu\Big(\ell,\; \exists u\in J,\; \ell\ninter \underset{j\in J^c\cup\{u\}}{\cup}B_j\Big)\\=&\sum_{K\subsetneqq  J}(-1)^{|K|}\mu(\ell,\;  \ell \ninter \underset{u\in J^c\cup K}{\cup}B_u).
 \end{align*}
To conclude, we  use that:
\begin{itemize}
\item For a subset  $F$ of vertices,\[\mu(\ell,\; \ell \ninter F)=\mu(\DL(F^c))=\log\Big(\det(G^{(F^c)})\prod_{x\in F^c}\lambda_x\Big).\]
\item For every family of reals $(a_u)_{u\in J}$ indexed by a finite subset $J$ with at least two elements, 
 \begin{equation}\label{sumalt}
\sum_{I\subsetneqq J}(-1)^{|I|}\sum_{u\in J\setminus I}a_u=0.
 \end{equation}
\end{itemize}
\end{proof}
\begin{example}
\label{extransKn}
Let us consider the complete graph $K_n$ with unit conductances and a uniform killing measure with intensity $\kappa$. The  transition matrix $P$ verifies: $P_{x,y}=\frac{1}{n-1+\kappa}\un_{\{x\neq y\}}$ for every  vertices $x$ and $y$. Thus for every subset $F$ of vertices,
${\det(G^{(F)})=((n+\kappa)^{|F|-1}(n+\kappa-|F|))^{-1}}$  (the computation of this determinant is detailed in  Lemma \ref{detmatrix}).  
 Using \eqref{eqtransratecrible} and \eqref{sumalt},  we obtain:  
 \begin{equation}\tau_{\pi,\pi^{\oplus J}}=\sum_{I\subsetneqq J}(-1)^{|I|+1}\log\Big(1-\frac{1}{n+\kappa}\sum_{u\in J\setminus I}|B_u|\Big).
 \end{equation}
\end{example}
\subsection{The semigroup of the coalescent process on a finite graph}
In this section, we assume that the graph $\mathcal{G}$ is finite. For a partition $\pi$ of $X$, let  $\Pd_{\pi}(\cdot)$ denote the conditional probability $\Pd(\cdot \mid \mathcal{C}_{0}=\pi)$.  
The probability that   $\mathcal{C}_{\alpha}$ is finer than a given partition of $X$ has a simple expression: 
\begin{lem}\label{semigrouppartition}
Let us assume that the graph $\mathcal{G}=(X,\mathcal{E})$ is finite. Let $\pi$ be a partition of $X$ into non-empty blocks $\{B_i,\ i\in I\}$. For every partition $\pi_0$ of $X$,
 \begin{equation}\label{semigroupdetG}
\Pd_{\pi_{0}}(\mathcal{C}_{\alpha}\preceq \pi)=\left(\frac{\prod_{i\in I}\det(G^{(B_i)})}{\det(G)}\right)^{\alpha}\un_{\{\pi_0 \preceq\pi\}}. 
\end{equation}
\end{lem}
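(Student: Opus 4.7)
The plan is to reduce the probability to a single exponential of a loop-measure mass, then evaluate that mass using the determinant formula already invoked in the preceding proof.

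First I would observe that the indicator $\un_{\{\pi_0 \preceq \pi\}}$ comes from the monotonicity of the coalescent: since open edges only accumulate as $\alpha$ grows, the partition $\mathcal{C}_\alpha$ can only become coarser, so $\mathcal{C}_\alpha \succeq \mathcal{C}_0 = \pi_0$ almost surely. Hence on $\{\pi_0 \not\preceq \pi\}$ the event $\{\mathcal{C}_\alpha \preceq \pi\}$ is impossible. From now on I assume $\pi_0 \preceq \pi$ and condition on $\mathcal{C}_0=\pi_0$; by the independence of the increments of $(\DL_\alpha)$, the conditioning only affects loops already contributing to $\pi_0$, all of which are contained in single blocks of $\pi_0 \preceq \pi$.

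The key observation is the equivalence
\[ \{\mathcal{C}_\alpha \preceq \pi\} = \{\text{every loop } \ell \in \DL_\alpha \text{ is contained in a single block } B_i\}. \]
Indeed, a loop visiting two distinct blocks $B_i$ and $B_j$ must traverse an edge between them, which places an open edge across blocks of $\pi$ and thus merges vertices from $B_i$ and $B_j$ into a common cluster; conversely, if every loop sits inside a single block then every open edge lies inside some $B_i$, so every connected component of $\mathcal{G}_\alpha$ is included in a block. Since $\DL_\alpha$ is a Poisson point process with intensity $\alpha\mu$, I obtain
\[ \Pd_{\pi_0}(\mathcal{C}_\alpha \preceq \pi) = \exp\bigl(-\alpha\, \mu(\ell,\; \ell \text{ intersects at least two blocks of } \pi)\bigr). \]

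For the exponent, since each loop either stays in some single $B_i$ or intersects two or more blocks, and these events partition $\DL(X)$,
\[ \mu(\ell,\; \ell \text{ intersects at least two blocks of } \pi) = \mu(\DL(X)) - \sum_{i\in I} \mu(\DL(B_i)). \]
Applying the identity $\mu(\DL(F)) = \log\bigl(\det(G^{(F)}) \prod_{x\in F} \lambda_x\bigr)$ already used in the previous proof to each of $F=X$ and $F=B_i$, and noting that $\prod_{x\in X}\lambda_x = \prod_{i\in I}\prod_{x\in B_i}\lambda_x$ because $\pi$ is a partition of $X$, the $\lambda$-factors cancel and this exponent equals $\log\bigl(\det(G)/\prod_{i\in I} \det(G^{(B_i)})\bigr)$. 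Exponentiating yields \eqref{semigroupdetG}. The only step requiring care is the equivalence claim at the heart of the argument; the arithmetic on the exponent is straightforward once the logarithmic formula for $\mu(\DL(F))$ is in hand.
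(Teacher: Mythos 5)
Your proof is correct and follows essentially the same route as the paper: identify $\{\mathcal{C}_\alpha\preceq\pi\}$ with the event that every loop of $\DL_\alpha$ stays in a single block, use the Poisson structure to get $\exp\bigl(-\alpha(\mu(\DL(X))-\sum_{i}\mu(\DL(B_i)))\bigr)$, and conclude with the identity $\mu(\DL(F))=\log\bigl(\det(G^{(F)})\prod_{x\in F}\lambda_x\bigr)$, the $\lambda$-factors cancelling. Your extra remarks on the indicator $\un_{\{\pi_0\preceq\pi\}}$ and on the role of the conditioning are consistent with the paper's (more terse) argument.
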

\begin{proof} Let us assume that $\pi$ is coarser than $\pi_0$. 
The event `\emph{$\mathcal{C}_{\alpha}$ is finer than $\pi$}' means that every  loop of $\La$ is included in a block of $\pi$.  Therefore,  \[\Pd_{\pi_{0}}(\mathcal{C}_{\alpha}\preceq \pi)=\exp\Big(-\alpha\Big(\mu(\DL(X))-\sum_{i\in I}\mu(\DL(B_i))\Big)\Big) \]
since the set of loops in each block $B_i$ of $\pi$ at time $\alpha$  defines independent Poisson point processes. 
To conclude, we use that \[\mu(\DL(X))=\log\Big(\det(G)\prod_{x\in X}\lambda_x\Big)\] and \[\mu(\DL(B_i))=\log\Big(\det(G^{(B_i)})\prod_{x\in B_i}\lambda_x\Big).\]
\end{proof}

An explicit formula for $\Pd_{\pi_0}(\mathcal{C}_{\alpha}=\pi)$ can be derived from Lemma \ref{semigrouppartition}.  Let us first introduce some notations. For  a partition $\pi$,  let $|\pi|$ denote the number of non-empty blocks of $\pi$. For a subset $A$,  let $\pi_{|A}$ denote the restriction of $\pi$ to $A$: $\pi_{|A}$ is a partition of $A$, 
the blocks of which are the intersection of the blocks of $\pi$ with $A$. 
\begin{lem}
Let us consider a finite graph $\mathcal{G}=(X,\mathcal{E})$. 
Let  $\pi_0$ and $\pi$ be two partitions of $X$. If $\pi$ has $k$ non-empty blocks denoted by $B_1\ldots,B_k$ then
\begin{equation}
\label{sgr_alpha}
\Pd_{\pi_0}(\mathcal{C}_{\alpha}=\pi)=\sum_{\tilde{\pi}\preceq \pi}(-1)^{|\tilde{\pi}|-k}\prod_{i=1}^{k}(|\tilde{\pi}_{|B_i}|-1)!\Pd_{\pi_0}(\mathcal{C}_{\alpha}\preceq \tilde{\pi}).
\end{equation}
\end{lem}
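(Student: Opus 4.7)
The plan is to recognize \eqref{sgr_alpha} as the Möbius inversion, inside the partition lattice $\Pi(X)$, of the tautological identity
\[
\Pd_{\pi_{0}}(\mathcal{C}_{\alpha}\preceq\sigma)=\sum_{\tau\preceq\sigma}\Pd_{\pi_{0}}(\mathcal{C}_{\alpha}=\tau).
\]
Standard Möbius inversion in this finite lattice then gives
\[
\Pd_{\pi_{0}}(\mathcal{C}_{\alpha}=\pi)=\sum_{\tilde{\pi}\preceq\pi}M(\tilde{\pi},\pi)\,\Pd_{\pi_{0}}(\mathcal{C}_{\alpha}\preceq\tilde{\pi}),
\]
where $M$ denotes the Möbius function of $\Pi(X)$, so the whole task reduces to identifying $M(\tilde\pi,\pi)$ with the combinatorial coefficient $(-1)^{|\tilde\pi|-k}\prod_{i=1}^{k}(|\tilde\pi_{|B_i}|-1)!$ appearing on the right-hand side of \eqref{sgr_alpha}.

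To compute $M(\tilde\pi,\pi)$, I would invoke the classical product decomposition of intervals in $\Pi(X)$: a partition $\sigma$ with $\tilde\pi\preceq\sigma\preceq\pi$ is uniquely determined by its restrictions $\sigma_{|B_i}$ to the blocks of $\pi$, and each $\sigma_{|B_i}$ ranges independently over the interval $[\tilde\pi_{|B_i},\mathbf{1}_{B_i}]$ in $\Pi(B_i)$, where $\mathbf{1}_{B_i}$ is the one-block partition of $B_i$. Hence $[\tilde\pi,\pi]\cong\prod_{i=1}^{k}[\tilde\pi_{|B_i},\mathbf{1}_{B_i}]$, and multiplicativity of Möbius functions under products of posets gives
\[
M(\tilde\pi,\pi)=\prod_{i=1}^{k}M_{\Pi(B_i)}(\tilde\pi_{|B_i},\mathbf{1}_{B_i}).
\]
Each factor $[\tilde\pi_{|B_i},\mathbf{1}_{B_i}]$ is in turn isomorphic to the full partition lattice of an abstract set of cardinality $|\tilde\pi_{|B_i}|$ (its elements being the blocks of $\tilde\pi_{|B_i}$), so the only external input needed is the classical evaluation $M_{\Pi_n}(\hat 0,\hat 1)=(-1)^{n-1}(n-1)!$ for the Möbius function of $\Pi_n$ between its bottom and top elements, easily derived by induction on $n$ from $\sum_{\sigma\preceq\hat 1}M(\hat 0,\sigma)=0$. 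Combining these ingredients and using $|\tilde\pi|=\sum_{i}|\tilde\pi_{|B_i}|$ produces the announced coefficient.

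The only non-formal step is the evaluation of $M_{\Pi_n}(\hat 0,\hat 1)$; everything else is structural bookkeeping, and I would expect no genuine obstacle. The formula is also automatically compatible with the case $\pi_{0}\not\preceq\pi$: both sides vanish, since $\mathcal{C}_{\alpha}$ is always coarser than $\pi_{0}$, forcing $\Pd_{\pi_{0}}(\mathcal{C}_{\alpha}\preceq\tilde\pi)=0$ unless $\pi_{0}\preceq\tilde\pi$.
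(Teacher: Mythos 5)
Your proof is correct, and at bottom it performs the same inversion as the paper, but packaged in lattice language rather than by hand. The paper first proves the one-block case as a pointwise identity of indicators, \eqref{eqindpartition}, by counting the partitions with $\ell$ blocks coarser than $\mathcal{C}_{\alpha}$ (Stirling numbers of the second kind) and quoting the alternating identity $\sum_{\ell=1}^{j}(-1)^{\ell-1}(\ell-1)!\,|\mathfrak{P}_{\ell}(\enu{j})|=\un_{\{j=1\}}$ from Pitman; it then passes to general $\pi$ by factorizing indicators over the blocks $B_1,\ldots,B_k$. Your two inputs are exactly the lattice-theoretic counterparts of these steps: the quoted Stirling identity is the evaluation $M_{\Pi_n}(\hat 0,\hat 1)=(-1)^{n-1}(n-1)!$ read through the isomorphism $[\tau,\hat 1]\cong \Pi_{|\tau|}$, and the paper's blockwise factorization is the concrete form of your interval decomposition $[\tilde\pi,\pi]\cong\prod_{i}[\tilde\pi_{|B_i},\mathbf{1}_{B_i}]$ together with multiplicativity of the M\"obius function. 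What your route buys is that the reduction to the one-block case is handled once and for all by lattice structure, with no manipulation of indicator products; the paper's corresponding step (e.g.\ $\un_{\{\mathcal{C}_{\alpha}=(B_1,\ldots,B_k)\}}=\prod_{i}\un_{\{\mathcal{C}_{\alpha|B_i}=B_i\}}$, whose right-hand side taken literally is the indicator of $\mathcal{C}_{\alpha}\succeq\pi$) must be read with some care, even though the final pointwise identity it produces is true. What the paper's route buys is self-containedness: it never invokes M\"obius inversion or the M\"obius function of the partition lattice, only the cited Stirling identity. Your ancillary points are also right: the only external input is $M_{\Pi_n}(\hat 0,\hat 1)$, obtainable by induction from the defining recursion, and when $\pi_0\not\preceq\pi$ both sides of \eqref{sgr_alpha} vanish because $\mathcal{C}_{\alpha}\succeq\pi_0$ forces $\Pd_{\pi_0}(\mathcal{C}_{\alpha}\preceq\tilde\pi)=0$ unless $\pi_0\preceq\tilde\pi$.
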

\begin{proof}
Let us first assume that  $\pi=\{X\}$. To obtain equation \eqref{sgr_alpha},  it is  sufficient to prove the following identity:
\begin{equation}\label{eqindpartition}
\un_{\{\mathcal{C}_{\alpha}=\{X\}\}}=\sum_{\ell=1}^{|X|}(-1)^{\ell-1}(\ell-1)!\sum_{\pi_{\ell}\in \mathfrak{P}_{\ell}(X)}\un_{\{\mathcal{C}_{\alpha}\preceq \pi_{\ell}\}}.
\end{equation}
where $\mathfrak{P}_{\ell}(X)$ denotes the set of partitions of $X$ with $\ell$ non-empty blocks. 
Let us assume that $\mathcal{C}_{\alpha}$ is a partition with $j$ non-empty blocks. For $\ell\leq j$,  we can construct a partition coarser than $\mathcal{C}_{\alpha}$ with $\ell$ non-empty blocks by choosing how to merge some blocks of $\mathcal{C}_{\alpha}$, that is by choosing a partition of $\enu{j}$ with $\ell$ blocks. Therefore $$\sum_{\pi_{\ell}\in \mathfrak{P}_{\ell}(X)}\un_{\{\mathcal{C}_{\alpha}\preceq \pi_{\ell}\}}= |\mathfrak{P}_{\ell}(\enu{j})|\un_{\{\ell\leq j\}}$$ 
and the right-hand side of \eqref{eqindpartition} is equal to 
${\displaystyle \sum_{\ell=1}^{j}(-1)^{\ell-1}(\ell-1)!|\mathfrak{P}_{\ell}(\enu{j})|}$. By an identity on the Stirling numbers of the second kind (see for example \cite{PitmanLN} equation (1.30) page 22), this sum is equal to 1 if $j=1$ and $0$ if $j\geq 2$.  \\
To prove \eqref{sgr_alpha} for a partition $\pi$ with $k$ non-empty blocks $(B_1,\dots,B_k)$, we consider $\mathcal{C}_{\alpha}$ as a sequence of  partitions on $B_1,\ldots, B_k$ respectively and apply \eqref{eqindpartition}: 
 \[\un_{\{\mathcal{C}_{\alpha}=(B_1,\ldots,B_k)\}}=\prod_{i=1}^{k}\un_{\{\mathcal{C}_{\alpha|B_i}=B_i\}}=\sum_{\ell_1=1}^{|B_1|}\cdots\sum_{\ell_k=1}^{|B_k|}\prod_{i=1}^{k}V_{i,\ell_i}\]
where $V_{i,\ell}=(-1)^{\ell-1}(\ell-1)!\sum_{\tilde{\pi}\in \mathfrak{P}_{\ell}(B_i)}\un_{\{\mathcal{C}_{\alpha|B_i}\preceq \tilde{\pi}\}}$.
Therefore, \begin{alignat*}{2}
\un_{\{\mathcal{C}_{\alpha}=(B_1,\ldots,B_k)\}}=&\sum_{\ell_1=1}^{|B_1|}\!\!\cdots\!\!\sum_{\ell_k=1}^{|B_k|}(-1)^{\ell_1+\ldots+\ell_k-k}\prod_{i=1}^{k}(\ell_i-1)!\hspace{-10px}\sum_{\tilde{\pi}\in \mathfrak{P}_{\ell_1}(B_1)\times \!\cdots \!\times  \mathfrak{P}_{\ell_k}(B_k)}\hspace{-30px}\un_{\{\mathcal{C}_{\alpha}\preceq \tilde{\pi}\}}\\
=&\sum_{\tilde{\pi}\preceq \pi}(-1)^{|\tilde{\pi}|-k}\prod_{i=1}^{k}(|\tilde{\pi}_{|B_i}|-1)!\un_{\{\mathcal{C}_{\alpha}\preceq \tilde{\pi}\}}.
\end{alignat*}
\end{proof}
\begin{example}
Let us consider the complete graph $K_n$ endowed with unit conductances and a uniform killing measure of intensity $\kappa$. If  $\pi$ is a partition of the set of vertices $X$ with $k$ blocks $B_1,\ldots,B_k$ then $$\Pd_{\pi_{0}}(\mathcal{C}_{\alpha}\preceq \pi)=(\frac{\kappa}{\kappa+n})^{\alpha}\prod_{i\in I}(1-\frac{|B_i|}{n+\kappa})^{-\alpha}\un_{\{\pi_0 \preceq\pi\}} $$
and
$$
\Pd_{\pi_0}(\mathcal{C}_{\alpha}=\pi)=\Big(\frac{\kappa}{\kappa+n}\Big)^{\alpha}\!\!\!\sum_{\underset{ \text{ s.t.}\  \pi_0\preceq\tilde{\pi}\preceq \pi}{\tilde{\pi}=(\tilde{B}_j)_{j\in J}}}\!\!\!(-1)^{|J|-k}\prod_{i=1}^{k}\Big(\card(\{j\in J,\ \tilde{B}_j\subset B_i\})-1\Big)!\prod_{j\in J}\Big(1-\frac{|\tilde{B}_j|}{n+\kappa}\Big)^{-\alpha}.
$$
Let us note that 
 $(1-\frac{j}{n+\kappa})^{-\alpha}$ is the $j$-th moment $m_{j}$ of the random variable 
$Y=\exp(\frac{Z}{n+\kappa})$ where $Z$ denotes a Gamma$(\alpha,1)$-distributed
random variable. Thus  $\displaystyle{\Pd_{\pi_{0}}(\mathcal{C}_{\alpha}\preceq\pi%
)=\frac{m_{n}}{\prod_{i=1}^{k} m_{\left|  B_{i}\right|  }}\un_{\{\pi_0 \preceq\pi\}}}.$  
Let $c_n$ denote the $n$-th cumulant of $Y$.  Formula \eqref{sgr_alpha} and the expression of cumulants  in terms of moments (see formula (1.30) in \cite{PitmanLN} for example) yield 
$\Pd_{\pi_{0}}(\mathcal{C}_{\alpha}=\{X\})=\frac{c_{n}}{m_{n}}$. 
\end{example}
\subsection{Loop clusters included in a subset of $X$}
For a subset  $D$ of $X$, let $\mathcal{DL}%
_{\alpha}^{(D)}$ denote the loops of $\mathcal{DL}_{\alpha}$ contained in $D$ and $\mathcal{C}_{\alpha}^{(D)}$  the partition of $D$ induced by $\mathcal{DL}%
_{\alpha}^{(D)}$. In general  $\mathcal{C}_{\alpha}^{(D)}$ is finer than the restriction of
$\mathcal{C}_{\alpha}$ to $D$ but coincides with it if $D$ is a union of
elements of $\mathcal{C}_{\alpha}$. \\
For every partition $\pi$ of $X$ with $k$ non-empty blocks $B_1,\ldots,B_k$, 
the loop sets $\La^{(B_1)}$, \ldots, $\La^{(B_k)}$, ${\La\setminus(\cup_{i=1}^{k}\La^{(B_i)})}$ are independent.  This yields the following equality:
\[
\Pd(\mathcal{C}_{\alpha}=\pi)=\Pd(\mathcal{C}_{\alpha
}\preceq\pi)\prod_{i=1}^{k}\Pd(\mathcal{C}_{\alpha}^{(B_{i})}=\{B_{i}\}).
\]
Let us note also that  if $U\subset D\subset X$ then
\[
\Pd(\mathcal{C}_{\alpha}\preceq\{U, U^c\})=\Pd(\mathcal{C}%
_{\alpha}^{(D)}\preceq\{U,D\setminus U\})\Pd(\nexists \ell\in\DL_{\alpha
}\;\text{visiting}\;U\;\text{and}\; D^c)
\]
(see \cite{Lawlerarx11} page 2 for a related result in Schramm-Loewner Evolution (SLE) context). 
The second term on the right-hand side equals 
\[e^{-\alpha\big(\mu(\DL(X))-\mu(\DL(U^c))-\mu(\DL(D))+\mu(\DL(D\setminus U))\big)};\]
If $X$ is finite, it  can be written  as
$$\Big(\frac{\det(G^{(U^c)})\det(G^{(D)}%
)}{\det(G)\det(G^{(D\setminus U)})}\Big)^{\alpha}\text{ or, with Jacobi's identity, as }\Big(\frac{\det(G_{|U\times U})\det
(G_{|D^c\times D^c})}{\det(G_{|(U\cup D^c)\times(U\cup D^c)})}\Big)^{-\alpha}.$$
\subsection{Computation of the semigroup using exit distributions}
The pro\-ba\-bility that $\Ca$ is finer than a partition $\pi$ can also be expressed using  exit distributions. The formula obtained is easier to use than \eqref{semigroupdetG} for graphs such as tori,  trees \ldots\ Let us first introduce some notations. 
For a subset $D$ of $X$, let $\partial D$ denote the inner boundary of $D$: \[\partial D=\{x\in D, C_{x,y}>0\;\text{for some}\;y\in  D^c\}\] and let $H^{(D)}$ denote the exit distribution (or Poisson kernel) from $D$: for $x\in X$ and $y\in D^c$, 
$H^{(D)}_{x,y}$ is the probability that a Markov chain with transition matrix $P$ starting from $x$ exits from $D$ at $y$. 
\begin{lem}
\label{semigroupexit}
Let $\pi=(B_i)_{i=1,\ldots,k}$ be a partition of $X$. Let $B=\cup_{j=1}^{k}\partial B_j$ denote the union of the boundary points of the blocks of $\pi$. Let  $\mathcal{H}^{(\pi)}$ denote the  matrix indexed by $ B$  defined by: 
\[\mathcal{H}^{(\pi)}_{x,y}=\left\{\begin{array}{ll}\un_{\{x=y\}} &\text{if}\; x,y\in \partial B_i\\
-H^{(B_i)}_{x,y}   &\text{if}\; x\in \partial B_i, y\in \partial B_j \;\text{and}\; i\neq j.\\  
 \end{array}\right.\]
If $B$ is finite then the probability for $\mathcal{C}_{\alpha}$ to be  finer than $\pi$ is 
 \[\Pd(\mathcal{C}_{\alpha}\preceq \pi)=\det(\mathcal{H}^{(\pi)})^{\alpha}.\] 
\end{lem}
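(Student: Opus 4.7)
The plan is to reduce the claim to computing the $\mu$-measure of the set $\mathcal{L}$ of loops not contained in any single block of $\pi$, and then to identify this measure with a trace--determinant on the finite set $B$.

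First I would argue, exactly as in the proof of Lemma~\ref{semigrouppartition}, that the event $\{\mathcal{C}_\alpha\preceq\pi\}$ is equivalent to every loop of $\La$ being contained in some $B_i$. Since the loops in $\mathcal{L}$ form an independent component of the Poisson point process $\mathcal{DP}$, the number of such loops in $\La$ is Poisson with parameter $\alpha\mu(\mathcal{L})$, giving
\[\Pd(\mathcal{C}_\alpha\preceq\pi) = \exp(-\alpha\mu(\mathcal{L})).\]
Unlike Lemma~\ref{semigrouppartition}, this step does not require $X$ to be finite; only $\mu(\mathcal{L})<\infty$ is needed, which will follow from the next step.

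Next, I would express $\mu(\mathcal{L})$ as the loop measure of a reduced sub-Markov chain on $B$. Every loop $\ell\in\mathcal{L}$ carries a well-defined cyclic \emph{block-entry trace} $(y_1,\ldots,y_m)$: the sequence of vertices where $\ell$ first enters a new block; each $y_j$ lies in $\partial B_{b_j}$ with $b_j\neq b_{j+1}$. Between $y_j$ and $y_{j+1}$ the loop performs an excursion inside $B_{b_j}$ ending with a step that exits at $y_{j+1}$, and summing the $P$-weights of all such excursions reproduces exactly $H^{(B_{b_j})}_{y_j,y_{j+1}}$. Setting $\tilde H := I-\mathcal{H}^{(\pi)}$ (so that $\tilde H_{x,y}=H^{(B_i)}_{x,y}$ for $x\in\partial B_i,\,y\in\partial B_j,\,i\neq j$, and zero otherwise), this correspondence yields
\[\mu(\mathcal{L}) = \sum_{m\geq 2}\frac{1}{m}\operatorname{tr}(\tilde H^m).\]
Since $B$ is finite and $\tilde H$ is substochastic with spectral radius strictly less than one, the series converges and the standard trace--determinant identity gives $\sum_{m\geq 1}\frac{1}{m}\operatorname{tr}(\tilde H^m)=-\log\det(I-\tilde H)=-\log\det(\mathcal{H}^{(\pi)})$; the $m=1$ term vanishes because $\tilde H$ has zero diagonal. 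Substituting into the first displayed equation yields $\Pd(\mathcal{C}_\alpha\preceq\pi)=\det(\mathcal{H}^{(\pi)})^{\alpha}$.

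The most delicate point will be the counting identity underlying the second step, which must reconcile the $1/n$ normalization in $\mu$ with the $1/m$ normalization in $\operatorname{tr}(\tilde H^m)$. For an unbased loop $\ell\in\mathcal{L}$ of length $n$ and period $p$ with $m$ block changes, periodicity of the block-change pattern forces $mp/n$ to be an integer; one has $\mu(\ell)=\frac{p}{n}\prod P(\ell)$; and the number of based representatives of $\ell$ whose base point is a block-entry equals $\frac{mp}{n}$. Multiplication by $\frac{1}{m}$ then recovers precisely the factor $\frac{p}{n}$, matching $\mu(\ell)$ term by term and confirming the displayed identity.
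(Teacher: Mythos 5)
Your argument is correct, but it takes a genuinely different route from the paper's. The paper first treats finite $X$: it starts from Lemma \ref{semigrouppartition}, writes the ratio of Green-function determinants as $\det(K)^{\alpha}$ with $K=\diag(G^{(B_i)},i)\,G^{-1}$, identifies the entries of $K$ through $H^{(B_i)}_{x,y}=\sum_{z\in B_i}G^{(B_i)}_{x,z}C_{z,y}$, and then passes from $X$ to the finite boundary set $B$ by invoking the loop soup induced by the trace of the chain on $B$ (from \cite{stfl}); the countable case is then handled by exhaustion with finite subgraphs. You instead compute $\alpha\mu(\mathcal{L})$ directly, decomposing each loop that meets several blocks at its successive block-entry points, resumming the excursions inside a block into the Poisson kernel, and reconciling the $1/n$ and $1/m$ normalizations by counting, for each unrooted loop, its rooted representatives based at a block-entry point. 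This gives $\mu(\mathcal{L})=\sum_{m\geq 2}\frac{1}{m}\operatorname{tr}\bigl((I-\mathcal{H}^{(\pi)})^m\bigr)=-\log\det(\mathcal{H}^{(\pi)})$ and treats finite and countable $X$ uniformly, needing only that $B$ be finite; it bypasses both Lemma \ref{semigrouppartition} and the induced-chain machinery. The counting step you flag as delicate is indeed sound: the block-entry indicator is periodic with the loop's minimal period $p$, so exactly $mp/n$ of the $p$ distinct rooted representatives are rooted at a block-entry, and $\frac{mp}{n}\cdot\frac{1}{m}=\frac{p}{n}$ reproduces $\mu(\ell)$ term by term.

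One assertion deserves justification: that $\tilde H=I-\mathcal{H}^{(\pi)}$ has spectral radius strictly less than $1$, which you use to sum the trace series into $-\log\det(I-\tilde H)$. Substochasticity is immediate, but strictness is not automatic; it follows from the finiteness of the Green function (assumed throughout the paper) together with the finiteness of $B$: $(\tilde H^{m}\mathbf{1})_x$ is the probability that the chain started at $x$ performs at least $m$ block changes, and infinitely many block changes would force infinitely many visits to the finite set $B$, which transience excludes; hence $\tilde H^{m}\mathbf{1}\to 0$ and, $B$ being finite, $\rho(\tilde H)<1$. Without this (or a separate treatment of the degenerate case $\det(\mathcal{H}^{(\pi)})=0$, where both sides vanish), that step is incomplete, but it is a one-line fix rather than a structural gap.
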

\begin{proof}\
\begin{itemize}
 \item 
First, let us assume that $X$ is finite. Let $K$ denote the product of the block diagonal matrix $\diag(G^{(B_i)}, i\in\{1,\ldots,k\})$ by the matrix $G^{-1}$. 
We can rewrite the expression of $\Pd(\Ca\preceq\pi)$ given by Lemma \ref{semigrouppartition} as  $\Pd(\Ca\preceq\pi)=\det(K)^{\alpha}$. 
The restriction of $K$ to  $B_{i}\times B_{i}$ is the identity.  The  exit distribution from a subset $D$ verifies:  $H^{(D)}_{x,y}=\sum_{z\in D}G^{(D)}_{x,z}C_{z,y}$ for every $x\in D$ and $y\in D^c$. Therefore,  
\begin{equation}\label{detK}
 \Pd(\Ca\preceq\pi)=\det(K)^{\alpha}
\end{equation}
where 
\[
K_{x,y}=\left\{\begin{array}{ll}\un_{\{x=y\}} &\text{ if } x,y\in  B_i\\
-H^{(B_i)}_{x,y}   &\text{ if } x\in  B_i,\; y\in  B_j \text{ and } i\neq j.\\  
 \end{array}\right.
\]
Let $(\xi_n)_n$ denote a Markov chain with transition matrix $P$.  The trace  of $(\xi_n)_n$ on  $ B$ defines a Markov chain denoted by $(\tilde{\xi}_{n})$ on $ B$ and thus a Poisson point  process $\mathcal{D}\tilde{\mathcal{P}}$ on $\RR_{+}\times \mathcal{DL}( B)$ (see \cite{stfl} chap. 7): the discrete loops set at time $\alpha$ is $$\mathcal{D}\tilde{\mathcal{L}}_{\alpha}:= \{\ld_{|B},\; \ell \in \La \setminus \La^{(B^c)}\}. $$ 
Let $\tilde{\mathcal{C}}_{\alpha}$ be the partition of $ B$ induced by $\mathcal{D}\tilde{\mathcal{L}}_{\alpha}$. The non-empty subsets $\partial B_i$, ${i\in\{1,\ldots,k\}}$  define a partition of $B$ denoted by   $\partial \pi$. 
As $\{\Ca\preceq \pi\}$ is the event 
`\emph{$\forall i,j\in\{1,\ldots,k\}$ such that $i\neq j$, $\forall x\in \partial B_i$, $y\in \partial B_j$,\ $\{x,y\}$ is not crossed by a loop of $\DL_{\alpha}$ at time $\alpha$}', it only depends on the restriction of the loops on  $ B$, hence 
$\Pd(\Ca\preceq\pi)=\Pd(\tilde{\mathcal{C}}_\alpha\preceq\partial\pi)$. \\

For a subset $F$ of $B$, let $\tilde{H}^{(F)}$ denote the exit distribution from $F$ for $(\tilde{\xi}_{n})$. 
It follows from formula  \eqref{detK} applied to  $\tilde{\mathcal{C}}_{\alpha}$ that  
\[
\Pd(\Ca\preceq\pi)=\det(\tilde{K})^{\alpha}\] where \[\tilde{K}_{x,y}=\left\{\begin{array}{ll}\un_{\{x=y\}} &\text{if}\; x,y\in  \partial B_i\\
-\tilde{H}^{(\partial B_i)}_{x,y}   &\text{if}\; x\in  \partial B_i,\; y\in  \partial B_j \;\text{and}\; i\neq j.\\  
 \end{array}\right.
\]
To conclude, it remains to note that $\tilde{H}^{(\partial B_i)}_{x,y}=H^{(B_i)}_{x,y}$ for every $x\in \partial B_i$ and $y\in B\setminus \partial B_i$. 
\item 
Let us now assume that $X$ is a countable set. Let $(X_k)_k$ be an increasing sequence of finite subsets of $X$ 
such that $X=\cup_{k=1}^{+\infty}X_k$. 
As $B$ is assumed to be finite, there exists an integer $k_0$ such that $B$ is included in $X_{k_0}$. For $k\geq k_0$,  a loop in $\mathcal{DL}_{\alpha}^{(X_k)}$  that passes through two different blocks of $\pi$, passes through  two different blocks of $\pi_{|X_k}$ the restriction of $\pi$ to $X_k$. Therefore, the probability that $\mathcal{C}_{\alpha}$ is finer than $\pi$ is:
\[
1-\sup_{k\geq k_0}\Pd(\exists \ell\in \mathcal{DL}^{(X_k)}_{\alpha}\; \text{passing through two different blocks of}\;\pi)\\=\inf_{k\geq k_0}\Pd(\mathcal{C}_{\alpha}^{(X_k)}\preceq \pi_{|X_k}).
 \]
 By the first part of the proof, 
$\Pd(\mathcal{C}_{\alpha}^{(X_k)}\preceq\pi_{|X_k})=\det(\mathcal{H}^{(\pi_{|X_k})})^{\alpha}$. It remains to note that  the matrix $\mathcal{H}^{(\pi_{|X_k})}$ coincides with $\mathcal{H}^{(\pi)}$ for every $k\geq k_0$ to deduce that 
${\Pd(\mathcal{C}_{\alpha}\preceq \pi)=\det(\mathcal{H}^{(\pi)})^{\alpha}}$. 
\end{itemize}
\end{proof}
\subsection{Closed edges in a finite graph}
An edge is said to be \emph{closed} at time $\alpha$ if it is not crossed by  a loop of $\La$.  More generally, a family of edges $E$ is said to be closed at time $\alpha$ if every edge of $E$ is closed at time $\alpha$. \\
Let us assume that $\mathcal{G}$ is a finite graph. To compute the probability for a family of edges $E$ to be closed, we modify the conductances and the killing measure so that the conductance of every edge in $E$ is $0$ and the measure $\lambda$ is unchanged: $\tilde{C}_{x,y}=C_{x,y}\un_{\{\{x,y\}\not \in E\}}$ $\forall \{x,y\}\in \mathcal{E}$ and $\tilde{\kappa}_{x}=\kappa_x+\sum_{y,\ \{x,y\}\in E}C_{x,y}$ $\forall x\in X$.  
Let  $G_{E}$ denote  the Green function associated with $\{\tilde{C}_{e},e\in\mathcal{E},\ \tilde{\kappa}_x, x\in X\}$.
\begin{lem} \label{closedset}
Let us assume that  $\mathcal{G}=(X,\mathcal{E})$ is a finite graph. 
The probability for a family of edges $E$ to be closed at time $\alpha$ is
$\Big(\frac{\det(G_{E})}{\det(G)}\Big)^{\alpha}$. 
\end{lem}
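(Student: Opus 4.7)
The plan is to compare the original loop measure $\mu$ on $\mathcal{DL}(X)$ with the one induced by the modified weights $(\tilde{C},\tilde{\kappa})$, and then use the Poisson-process thinning principle together with the formula $\mu(\mathcal{DL}(X))=\log(\det(G)\prod_x\lambda_x)$ that was already exploited in the proof of Lemma~\ref{semigrouppartition}.

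First I would check that the modification is tailored so that only $P$ changes, not $\lambda$. Indeed, $\tilde{\lambda}_x=\tilde{\kappa}_x+\sum_{y:\{x,y\}\in\mathcal{E}}\tilde{C}_{x,y}=\kappa_x+\sum_{y:\{x,y\}\in E}C_{x,y}+\sum_{y:\{x,y\}\notin E}C_{x,y}=\lambda_x$. Consequently the modified transition matrix satisfies $\tilde{P}_{x,y}=P_{x,y}$ if $\{x,y\}\notin E$ and $\tilde{P}_{x,y}=0$ if $\{x,y\}\in E$. The loop measure $\tilde{\mu}$ attached to $\tilde{P}$ then assigns to a based loop $(x_1,\ldots,x_n)$ the weight $\frac{1}{n}\tilde{P}_{x_1,x_2}\cdots\tilde{P}_{x_n,x_1}$, which is $\mup(\ell)$ when $\ell$ avoids every edge of $E$ and $0$ otherwise. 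In other words $\tilde{\mu}$ is exactly the restriction of $\mu$ to the set $\mathcal{A}_E:=\{\ell\in\mathcal{DL}(X):\ell\text{ uses no edge of }E\}$, and the complementary set $\mathcal{DL}(X)\setminus\mathcal{A}_E$ is the set of loops that cross at least one edge of $E$.

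Next I would use the Poissonian definition of $\mathcal{DL}_\alpha$. The event that $E$ is closed at time $\alpha$ means no loop of $\mathcal{DL}_\alpha$ lies in $\mathcal{DL}(X)\setminus\mathcal{A}_E$. Since $\mathcal{DL}_\alpha$ is a Poisson point process with intensity $\alpha\mu$, this probability equals
\[
\exp\bigl(-\alpha\,\mu(\mathcal{DL}(X)\setminus\mathcal{A}_E)\bigr)=\exp\bigl(-\alpha\bigl(\mu(\mathcal{DL}(X))-\tilde{\mu}(\mathcal{DL}(X))\bigr)\bigr).
\]

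Finally, I would invoke the identity $\mu(\mathcal{DL}(X))=\log\bigl(\det(G)\prod_{x\in X}\lambda_x\bigr)$ (already used in Lemma~\ref{semigrouppartition}; it follows from $\mu(\mathcal{DL}(X))=\sum_{n\geq1}\frac{1}{n}\mathrm{tr}(P^n)=-\log\det(I-P)$ and $I-P=\mathrm{diag}(\lambda)^{-1}(\lambda I-C)$). Applying the same identity to $\tilde{P}$, and using that $\tilde{\lambda}=\lambda$, yields $\tilde{\mu}(\mathcal{DL}(X))=\log\bigl(\det(G_E)\prod_{x\in X}\lambda_x\bigr)$. Subtracting, the $\prod_x\lambda_x$ factors cancel and we get $\mu(\mathcal{DL}(X))-\tilde{\mu}(\mathcal{DL}(X))=\log(\det(G)/\det(G_E))$, so the probability becomes $(\det(G_E)/\det(G))^\alpha$, as required.

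There is essentially no hard step; the only point requiring care is the verification that the prescribed modification preserves $\lambda$, which is what makes the $\prod_x\lambda_x$ factors match and allows the determinantal ratio to appear cleanly.
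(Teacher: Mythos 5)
Your proof is correct and follows essentially the same route as the paper: both arguments rest on the observation that the modified weights keep $\lambda$ unchanged and zero out $P$ on the edges of $E$, so the loops avoiding $E$ carry exactly the loop measure of the modified network, and the Poisson void probability then reduces to a ratio of the two $\log\det$ expressions (the paper phrases the middle step through the generating function $\mu(s^{N_E})=-\log\det(I-P_E(s))$ evaluated at $s=0$, which is the same computation as your direct restriction argument).
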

\begin{proof}
For a loop $\ell$ and an edge $e$, let  $N_{e}(\ell)$ denote the number of jumps of $\ell$ across $e$. Set $N_{E}(\ell)=\sum_{e\in E}N_e(\ell)$. 
\[\Pd(E \;\text{is closed  at time}\; \alpha)=\exp\Big(-\alpha\mu(\DL(X))+
 \alpha\mu(\ell,  N_E(\ell)=0)\Big).\]
Recall that $\mu(\DL(X))=-\log(\det(I-P))$. 
For $s\in [0, 1]$, let $P_{E}(s)$ denote  a perturbation of $P$  defined by \[[P_E(s)]_{x,y}=\left\{\begin{array}{ll}sP_{x,y} & \text{if}\; \{x,y\}\in E\\
P_{x,y}  & \text{if}\; \{x,y\}\not\in E.                                      
\end{array}\right.
 \]
Similarly, $\mu(s^{N_E})=-\log(\det(I-P_E(s)))$. In particular, 
\[{\mu(\ell,~N_E(\ell)=0)=-\log(\det(I-P_E(0)))}.\] 
Lemma \ref{closedset} follows since $P_E(0)$ is the transition matrix associated with  ${\{\tilde{C}_{e},e\in\mathcal{E},\ \tilde{\kappa}_x, x\in X\}}$.
\end{proof}
We can deduce from Lemmas \ref{closedset} and \ref{semigrouppartition} that if $E$ is a set of edges of a finite graph with extremities in
different blocks of a partition $\pi=\{B_i,\; i\in I\}$ then 
\[
\Pd(\mathcal{C}_{\alpha}\preceq\pi\mid E\;\text{is closed at time}\;\alpha)=\Big(\frac{\prod_{i\in I}\det(G^{(B_{i})})}%
{\det(G_{E})}\Big)^{\alpha}.%
\]
\section{Renewal processes}
On the graph  $\ZZ$, the clusters $\mathcal{C}_{\alpha}$ are intervals between
closed edges at time $\alpha$ (namely edges which are not crossed by any loop of
$\mathcal{DL}_{\alpha}$). The Poisson loop sets  induced by a simple random walk killed at constant rate $\kappa$  have the following properties:
\begin{proposition}
Let us consider the graph $\ZZ$ endowed with unit conductances and a uniform killing measure with intensity $\kappa$. Set 
\[\rho^{(\kappa)}=\log\Big(1+\frac{\kappa}{2}+\sqrt{\kappa+\frac{\kappa^{2}}{4}}\Big).\] 
\label{proprenewal}
\begin{itemize}
\item[(i)] The midpoints of the closed edges at time $\alpha$ form a renewal
process. Moreover, 
\begin{itemize} 
\item  the probability that $\{n,n+1\}$ is closed at time $\alpha$ is equal to $(1-e^{-2\rho^{(\kappa)}})^{\alpha}$ for every $n\in \ZZ$;
\item given that  $\{0,1\}$ is closed at time $\alpha$,  the probability that $\{n,n+1\}$ is also closed  is equal to 
$\Big(\frac{1-e^{-2\rho^{(\kappa)}}}{1-e^{-2\rho^{(\kappa)}(n+1)}}\Big)^{\alpha}$ for every $n\in \NN$. 
\end{itemize} 
\item[(ii)] Assume that $\alpha\in]0, 1[$.  Let $\nu^{(\kappa)}$ denote the law of this renewal process i.e. the
law of the distance between two consecutive closed edges at time $\alpha$. For $\epsilon>0$ let $(Y_{\epsilon,i})_{i\in\NN^*}$ denote a sequence of independent random variables with distribution $\nu^{(\kappa\epsilon)}$. \\
For every $t>0$, as $\epsilon$ converges to $0$, ${\displaystyle \sqrt{\epsilon}\sum_{i=1}^{[\epsilon^{-\frac{1-\alpha}{2}}t]}Y_{\epsilon,i}}$ 
converges in law to the value at $t$ of a subordinator with potential density 
$\Big(\frac{2\sqrt{\kappa}}{1-e^{-2u\sqrt{\kappa}}}\Big)^{\alpha}$.
\end{itemize}
\end{proposition}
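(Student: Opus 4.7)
For part (i), the renewal property follows from the independence structure of the Poisson loop ensemble. The loops of $\La$ contained in disjoint subsets of $\ZZ$ form independent Poisson point processes, and a loop crosses the edge $\{k,k+1\}$ if and only if it uses this (unique) connecting edge. Conditionally on $\{k,k+1\}$ being closed, the sub-ensembles $\La\cap\DL((-\infty,k])$ and $\La\cap\DL([k+1,\infty))$ are independent. Combined with translation invariance, this implies the inter-closed-edge distances are i.i.d.

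To compute the closure probabilities I would apply Lemma \ref{semigroupexit}. The event that $\{n,n+1\}$ is closed coincides with $\{\Ca\preceq\{(-\infty,n],[n+1,\infty)\}\}$, so Lemma \ref{semigroupexit} reduces the problem to computing $\det(\mathcal{H}^{(\pi)})$ for a $2\times 2$ matrix whose off-diagonal entry is $q=H^{((-\infty,n])}_{n,n+1}$. A first-step analysis on $\ZZ$ with killing rate $\kappa$ shows $q^2-(2+\kappa)q+1=0$, so $q=e^{-\rho^{(\kappa)}}$ is the root in $(0,1)$, giving $\det(\mathcal{H}^{(\pi)})=1-q^2$. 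For the joint closure of $\{0,1\}$ and $\{n,n+1\}$, the relevant three-block partition has boundary $B=\{0,1,n,n+1\}$ and the corresponding $4\times 4$ matrix involves the exit probabilities $a=H^{([1,n])}_{1,0}$ and $b=H^{([1,n])}_{1,n+1}$ from the finite interval $[1,n]$. Both solve the linear recurrence $u_{k+1}-(2+\kappa)u_k+u_{k-1}=0$ whose roots are $q,q^{-1}$; imposing the boundary conditions yields
\[
a=\frac{q(1-q^{2n})}{1-q^{2(n+1)}},\qquad b=\frac{q-q^{-1}}{q^{n+1}-q^{-(n+1)}}.
\]
Expanding the $4\times 4$ determinant reduces (after elementary row operations) to $1-2aq+(a^2-b^2)q^2$, which simplifies via the identity $(a-b)(a+b)=q^2/(1+q^2+\cdots+q^{2n})$ (a Chebyshev-type identity verifiable by induction on $n$) to $\det(\mathcal{H}^{(\pi)})=(1-q^2)^2/(1-q^{2(n+1)})$. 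Dividing by the marginal probability $(1-q^2)^\alpha$ gives the stated conditional formula.

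For part (ii), I would use the Laplace-transform method. Let $\phi_\kappa(z)=\Ed[z^{T_\kappa}]$ be the generating function of $\nu^{(\kappa)}$, and set $R_k^{(\kappa)}=\bigl((1-q^2)/(1-q^{2(k+1)})\bigr)^\alpha$ from part (i). The renewal identity gives $\sum_{k\geq 0}R_k^{(\kappa)}z^k=1/(1-\phi_\kappa(z))$. By the continuity theorem for Laplace transforms of non-negative random variables, it suffices to prove that for every $\lambda>0$,
\[
n_\epsilon\bigl(1-\phi_{\kappa\epsilon}(e^{-\lambda\sqrt{\epsilon}})\bigr)\longrightarrow t\,\Psi(\lambda),\qquad n_\epsilon=[\epsilon^{-(1-\alpha)/2}t],
\]
where $\Psi$ is the Laplace exponent of the target subordinator. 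From $\rho^{(\kappa\epsilon)}=\sqrt{\kappa\epsilon}(1+O(\sqrt{\epsilon}))$ one obtains $R_k^{(\kappa\epsilon)}\sim\epsilon^{\alpha/2}v(k\sqrt{\epsilon})$ uniformly for $k\sqrt{\epsilon}$ in compact subsets of $(0,\infty)$, where $v(x)=\bigl(2\sqrt{\kappa}/(1-e^{-2x\sqrt{\kappa}})\bigr)^\alpha$ is the proposed potential density. Recognizing
\[
\sum_{k\geq 0}R_k^{(\kappa\epsilon)}e^{-\lambda\sqrt{\epsilon}k}
\]
as a Riemann sum of step $\sqrt{\epsilon}$ (after factoring $\epsilon^{\alpha/2}$), it is asymptotic to $\epsilon^{(\alpha-1)/2}\hat v(\lambda)$ with $\hat v(\lambda)=\int_0^\infty v(x)e^{-\lambda x}dx$. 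Consequently $1-\phi_{\kappa\epsilon}(e^{-\lambda\sqrt{\epsilon}})\sim\epsilon^{(1-\alpha)/2}/\hat v(\lambda)$ and $n_\epsilon(1-\phi_{\kappa\epsilon})\to t/\hat v(\lambda)$. Identification is immediate from the general fact $\hat v(\lambda)=1/\Psi(\lambda)$ for the potential density of a subordinator, so $\Psi(\lambda)=1/\hat v(\lambda)$ and the limit is as claimed.

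The main obstacle is the uniform control of the Riemann-sum approximation near $x=0$, where $v(x)\sim x^{-\alpha}$: integrability of $v$ there requires precisely $\alpha<1$, which is the hypothesis. Moreover, for small $k$ the exact value $R_k^{(\kappa\epsilon)}$ is of order $1$ rather than $\epsilon^{\alpha/2}v(k\sqrt{\epsilon})$, so one must check separately that this boundary layer contributes only $O(1)$ to the sum, which is negligible compared with the dominant $\epsilon^{(\alpha-1)/2}\hat v(\lambda)$ since $(\alpha-1)/2<0$. The tail for large $k$ is handled by the geometric factor $e^{-\lambda\sqrt{\epsilon}k}$ together with the trivial bound $R_k^{(\kappa\epsilon)}\leq 1$.
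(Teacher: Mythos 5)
Your proposal follows the paper's strategy in all essentials: the renewal property comes from the independence of the loop ensembles on the two half-lines separated by a closed edge, the closure probabilities come from Lemma \ref{semigroupexit} with $q=e^{-\rho^{(\kappa)}}$ identified as the root in $(0,1)$ of $q^{2}-(2+\kappa)q+1=0$ (equivalently, of the harmonic equation used in the paper), and part (ii) is the same renewal-identity/Laplace-transform limit (the paper writes $\widehat{\nu^{(\kappa)}}=\widehat{q^{(\kappa)}}/(1+\widehat{q^{(\kappa)}})$, you write $\sum_{k\ge 0}R_{k}z^{k}=(1-\phi_{\kappa}(z))^{-1}$; these are the same statement). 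The one genuine variation is the conditional probability in (i): you compute the joint probability that $\{0,1\}$ and $\{n,n+1\}$ are closed via the three-block partition $\{(-\infty,0],[1,n],[n+1,\infty)\}$ and a $4\times 4$ determinant, then divide by the marginal, whereas the paper conditions first and applies Lemma \ref{semigroupexit} to the loop ensemble of the walk on $\NN$ killed at $0$, which reduces the computation to a product of two explicit exit probabilities. Both routes are valid; yours costs a slightly heavier determinant computation, the paper's exploits once more the independence structure already used for the renewal property.

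Two points need repair. First, your quoted intermediate identity is wrong: with $a=\frac{q(1-q^{2n})}{1-q^{2(n+1)}}$ and $b=\frac{q^{n}(1-q^{2})}{1-q^{2(n+1)}}$ one gets $a^{2}-b^{2}=\frac{q^{2}-q^{2n}}{1-q^{2(n+1)}}$, not $\frac{q^{2}}{1+q^{2}+\cdots+q^{2n}}=\frac{q^{2}(1-q^{2})}{1-q^{2(n+1)}}$ (these agree only for $n=2$); plugging the correct value into $1-2aq+(a^{2}-b^{2})q^{2}$ does yield your stated determinant $(1-q^{2})^{2}/(1-q^{2(n+1)})$, so the conclusion stands but the justification as written does not. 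Second, in (ii) the tail estimate is too loose as stated: with a fixed spatial cutoff $A$, the trivial bound $R_{k}\le 1$ gives $\sum_{k\ge A/\sqrt{\epsilon}}e^{-\lambda\sqrt{\epsilon}k}\asymp \epsilon^{-1/2}e^{-\lambda A}$, which is \emph{not} negligible compared with the main term $\epsilon^{(\alpha-1)/2}\hat v(\lambda)$. You need instead $R^{(\kappa\epsilon)}_{k}\le C\epsilon^{\alpha/2}$ uniformly for $k\sqrt{\epsilon}$ bounded away from $0$ (immediate from the formula, since $1-q^{2(k+1)}$ is bounded below there while $1-q^{2}\asymp\sqrt{\epsilon}$), or a cutoff growing with $\epsilon$; this, together with uniform control near $0$ (where you only assert convergence on compacts of $(0,\infty)$), is precisely what the paper's dominated-convergence argument with the sandwich $\kappa_{1}<\kappa<\kappa_{2}$ supplies, and it is where the hypothesis $\alpha<1$ enters through integrability of $u^{-\alpha}$ at $0$, as you correctly note.
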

\begin{remark}
 This subordinator is associated with the Poisson covering (cf.~\cite{BertoinLN}, chapter~7) defined by the infinite measure on $\RR^{+}$ with density 
\[u\mapsto-\alpha\frac{d^2}{du^2}\log(1-e^{-2u\sqrt{\kappa}})=\frac{4\kappa\alpha e^{-2u\sqrt{\kappa}}}{(1-e^{-2u\sqrt{\kappa}})^2}.
\] These covering intervals can be viewed as images of the ``loop soup'' of intensity $\alpha$ associated with the Brownain motion killed at constant rate $\kappa$. 
\end{remark}
\begin{proof}[Proof of Proposition \ref{proprenewal}]\
\begin{itemize}
 \item[(i)] An edge $\{x-1,x\}$ is closed at time $\alpha$ if and only if   ${\mathcal{DL}_{\alpha}=\mathcal{DL}_{\alpha}^{(x+\NN)}\cup\mathcal{DL}_{\alpha}^{(x-1-\NN)}}$. The next closed edge is the first edge which is not crossed by any loop of
$\mathcal{DL}_{\alpha}^{(x+\NN)}$, and previous closed edges are defined
by $\mathcal{DL}_{\alpha}^{(x-1-\NN)}$ which is independent of
$\mathcal{DL}_{\alpha}^{(x+\NN)}$. Stationarity is obvious
as $\mathcal{DL}_{\alpha}^{(x+\NN)}-x$ is distributed like $\mathcal{DL}%
_{\alpha}^{(\NN)}$. 
For $n\in \NN$, let $r^{(\kappa)}_n$ denote the probability that $\{n,n+1\}$ is closed at time $\alpha$. By Lemma \ref{semigroupexit}, ${r^{(\kappa)}_n=(1-H^{(n+1+\NN)}_{n+1,n}H^{(n-\NN)}_{n,n+1})^{\alpha}}$ where  $H^{(D)}_{x,y}$ denote the probability that a simple random walk killed at constant rate $\kappa$ and starting at $x$ exits $D$ at point $y$. The functions $x\mapsto H^{(D)}_{x,y}$ can be computed from the solutions of  equation
\[(2+\kappa)u(x)-u(x+1)-u(x-1)=0,\]
 which  are linear combinations of
$\exp(\rho^{(\kappa)} x)$ and $\exp(-\rho^{(\kappa)} x)$. \\
We obtain $H^{(n+1+\NN)}_{n+k,n}=H^{(n-\NN)}_{n+1-k,n+1}=\exp(-\rho^{(\kappa)} k)$ for every $k\in \NN$. \\
For $n\in\NN^*$, let $q^{(\kappa)}(n)$ denote the probability that $\{n,n+1\}$ is closed at time $\alpha$ given that $\{0,1\}$ is closed at time $\alpha$. 
To compute $q^{(\kappa)}(n)$, we consider   a simple random walk  on $\NN$ killed at rate $\kappa$ and at point $0$; we denote it  $(\zeta_{k})_k$. As $(\La^{(\NN)})_{\alpha}$ is the Poisson loop sets associated with  $(\zeta_{k})_k$,  by Lemma \ref{semigroupexit}  we obtain: 
${q^{(\kappa)}(n)=(1-h_{n}^{(\enum{0}{n})}h_{n+1}^{(n+1+\NN)})^{\alpha}}$ where  $h^{(D)}$ is the exit distribution from a subset $D$ of $\NN$ for $(\zeta_{k})_k$:  \[h_{k}^{(\enum{0}{n})}=\frac{\sinh(\rho^{(\kappa)} k)}{\sinh(\rho^{(\kappa)} (n+1))}\quad\text{for}\quad 1\leq k\leq n+1\] and $h_{k}^{(n+1+\NN)}=e^{\rho^{(\kappa)}(n-k)}$ for  $k\geq n$.
Therefore 
 \[q^{(\kappa)}(n)=\Big(1-\frac{\sinh(\rho^{(\kappa)} n)}{\sinh(\rho^{(\kappa)}
(n+1))}e^{-\rho^{(\kappa)}}\Big)^{\alpha}=\Big(\frac{1-e^{-2\rho^{(\kappa)}}}{1-e^{-2\rho^{(\kappa)}(n+1)}}\Big)^{\alpha}.\] 
\item[(ii)] Let $I_{\kappa}$ denote the Laplace transform of the function $u\mapsto \Big(\frac{2\sqrt{\kappa}}{1-e^{-2\sqrt{\kappa}u}}\Big)^{\alpha}$:  
\[I_{\kappa}(s)= \int_{0}^{\infty}\Big(\frac{2\sqrt{\kappa}}{1-e^{-2\sqrt{\kappa
}u}}\Big)^{\alpha}e^{-su}du\quad  \text{for}\quad s>0.\] 
Let $\widehat{\nu^{(\kappa)}}$ denote the Laplace transform of $\nu^{(\kappa)}$. 
We shall prove that for every $s>0$,  $-\varepsilon^{\frac{\alpha-1}{2}}\log\big(\widehat{\nu^{(\kappa
\varepsilon)}}(s\sqrt{\varepsilon)}\big)$ converges
towards $1/I_{\kappa}(s)$  as $\varepsilon$ tends to $0$ which yields (ii).\\ 
As $q^{(\kappa)}(n)=\sum_{k=1}^{\infty}(\nu^{(\kappa)})^{\ast k}(n)$ for every $n\in\NN^*$, we have  the Laplace
transforms identity: $\widehat{\nu^{(\kappa)}}=\frac{\widehat{q^{(\kappa)}}}{1+\widehat{q^{(\kappa)}}}$. 
Therefore, it is sufficient to show that $\epsilon^{(1-\alpha)/2}\widehat{q^{(\kappa\epsilon)}}(s\sqrt{\epsilon})$ converges to $I_{\kappa}(s)$ as $\epsilon$ tends to $0$. 
Let $\kappa_1$ and $\kappa_2$ be two positive reals such that $0<\kappa_1<\kappa<\kappa_2$. For $\epsilon$ small enough, $\sqrt{\epsilon\kappa_1}\leq \rho^{(\kappa\epsilon)}\leq \sqrt{\epsilon\kappa_2}$ and $2\sqrt{\epsilon\kappa_1}\leq 1-e^{-2\rho^{(\kappa\epsilon)}}\leq 2\sqrt{\epsilon\kappa_2}$. 
For $a>0$ let  $f_{a}$  be the function defined by $f_a(x)=(1-e^{-2ax})^{-\alpha}$ for $x>0$. Therefore, for $\epsilon$ small enough, 
 \[(2\sqrt{\epsilon\kappa_1})^\alpha f_{\kappa_{2}}(\sqrt{\epsilon}(n+1))\leq q^{(\kappa\epsilon)}(n)\leq (2\sqrt{\epsilon\kappa_2})^\alpha f_{\kappa_{1}}(\sqrt{\epsilon}(n+1))\] for every $n\in \NN$. 
 By Lebesgue's dominated convergence theorem  applied to the function $g_{a,s,\epsilon}$
defined by \[g_{a,s,\epsilon}(x)=f_{a}(\sqrt{\epsilon}\lceil \frac{x}{\sqrt{\epsilon}}\rceil )e^{-s\sqrt{\epsilon}\lfloor \frac{x}{\sqrt{\epsilon}}\rfloor}\ \forall x>0,\]  we obtain:
\[\sqrt{\epsilon}\sum_{n=0}^{+\infty}f_{a}(\sqrt{\epsilon}(n+1))e^{-s\sqrt{\epsilon}n}\underset{\epsilon \rightarrow 0}{\rightarrow} \int_{0}^{+\infty}f_{a}(x)e^{-sx}dx\]
(for $\epsilon<\frac{1}{2a}$, $g_{a,s,\epsilon}$ is dominated by $x\mapsto(\frac{2}{ax})^{\alpha}\un_{]0, \frac{1}{2a}]}(x)+\frac{e^{-sx}}{(1-e^{-1/2})^{\alpha}}\un_{]\frac{1}{2a},+\infty[}(x)$). 
This implies that for every $s>0$
\[\underline{\lim}\; \epsilon^{(1-\alpha)/2}\widehat{q^{(\kappa\varepsilon)}}(s\sqrt{\varepsilon})\geq \int_{0}^{+\infty}\Big(\frac{2\sqrt{\kappa_1}}{1-e^{-2\sqrt{\kappa_2}u}}\Big)^{\alpha}e^{-su}du 
\]
and
\[\overline{\lim}\; \epsilon^{(1-\alpha)/2}\widehat{q^{(\kappa\varepsilon)}}(s\sqrt{\varepsilon})
\leq \int_{0}^{+\infty}\Big(\frac{2\sqrt{\kappa_2}}{1-e^{-2\sqrt{\kappa_1}u}}\Big)^{\alpha}e^{-su}du.\]
These inequalities hold for every $0<\kappa_1<\kappa<\kappa_2$. Therefore for every $s>0$, 
$\epsilon^{(1-\alpha)/2}\widehat{q^{(\kappa\epsilon)}}(s\sqrt{\epsilon})$ 
converges to $I_{\kappa}(s)$ which ends the proof of (ii). 
\end{itemize}
\end{proof}
\begin{remark} In the case of the simple random walk on $\NN$ killed at $0$, a
similar argument (detailed in \cite{LeJanCRAS12}) shows that:
\begin{itemize}
\item For $0<\alpha<1$, the midpoints of the closed edges at time $\alpha$ form a renewal process with holding times $(Y^{(\alpha)}_{n})_{n\geq 1}$; The generating function of  $Y^{(\alpha)}_{n}$  
 is $1-\frac{s}{\text{Li}_{\alpha
}(s)}$ where Li denotes the polylogarithm: $\forall\ |s|<1$, 
$\text{Li}_\alpha(s)=\sum_{k=1}^{+\infty}\frac{s^k}{k^{\alpha}}$. Set $S^{(\alpha)}_{n}=\sum_{i=1}^{n}Y^{(\alpha)}_{i}$ for $n\geq 1$. 
 As $\varepsilon$ tends to $0$,
${(\varepsilon S_{\lfloor\varepsilon^{\alpha-1}t\rfloor}^{(\alpha)},t\geq 0)}$ converges in
law towards a stable subordinator $(S_{t}^{(\alpha)},t\geq0)$ with index
$1-\alpha$. In the case of a finite interval $[0,  L]$, we obtain a renewal
process conditioned to jump at point $L$.
\item For $\alpha>1$, there are only a finite number of clusters. In
particular, $\Pd(S_{1}^{(\alpha)}=\infty)=\frac{1}{\zeta(\alpha)}$.
\end{itemize}
\end{remark}
\section{Bernoulli percolation and loop percolation}
\subsection{Bernoulli percolation}
Let $s=\{s_{e},\; e\in\mathcal{E}\}$ be a family of coefficients in $[0, 1]$. 
In the Bernoulli percolation model of parameter $s$ 
on the graph $\mathcal{G}=(X,\mathcal{E})$, every  edge $e$  is, independently of each
other, called `open' with probability $s_e$ and `closed' with probability $1-s_e$.  Vertices connected by open paths define a partition of $X$ denoted by $\mathcal{P}(s)$. 
We can compare $\mathcal{P}(s)$ to the partition induced by the set of primitive discrete loops of length $2$ for a finite graph $(X,\mathcal{E})$. 

\begin{lem}\label{bernpercolation}
Let us consider a finite graph $(X,\mathcal{E})$.
For $\alpha>0$ and $e=\{x,y\}\in \mathcal{E}$, set  $s_{\alpha,e}=1-(1-P_{x,y}P_{y,x})^{\alpha}$. \\ 
 The partition induced by the set of primitive discrete loops of length $2$ at time $\alpha$ has the same law as $\mathcal{P}(\{s_{\alpha,e},\; e\in \mathcal{E}\})$. 
\end{lem}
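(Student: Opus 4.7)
The plan is to reduce the statement to Proposition \ref{primitivedistrib} by identifying length-2 primitive discrete loops with edges of $\mathcal{E}$.

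First, I would describe the bijection between length-2 primitive discrete loops and the edges of $\mathcal{G}$ on which $P$ is non-degenerate. A based loop of length $2$ is a pair $(x,y)\in X^2$, and its equivalence class under cyclic shift is $\{(x,y),(y,x)\}$; since the graph is simple there are no self-loops (so $P_{x,x}=0$), hence every length-2 discrete loop with positive $\mu$-measure is of the form $\eta_e$ for a unique edge $e=\{x,y\}\in\mathcal{E}$. Such a loop is automatically primitive (it cannot be written as $[\ell]^m$ for $m\geq 2$, because that would require $\ell$ of length 1, forcing $x=y$). Computing from the definition, $\mu(\eta_e)=2\cdot \tfrac{1}{2}P_{x,y}P_{y,x}=P_{x,y}P_{y,x}$ (the factor $2$ coming from the size of the equivalence class and the $\tfrac{1}{2}$ from the definition of $\mup$).

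Next, I would invoke Proposition \ref{primitivedistrib}. That proposition says the law of $\mathcal{PL}_\alpha$ is a product measure on $\{0,1\}^{\mathcal{PL}(X)}$ with
\[\Pd(\eta\in\mathcal{PL}_\alpha)=1-(1-\mu(\eta))^{\alpha}.\]
Restricting to the sub-$\sigma$-algebra generated by the indicators $\mathbb{1}_{\{\eta_e\in\mathcal{PL}_\alpha\}}$ for $e\in\mathcal{E}$, we obtain an independent family of Bernoulli random variables, and the marginal probability for the edge $e=\{x,y\}$ is exactly
\[1-(1-P_{x,y}P_{y,x})^{\alpha}=s_{\alpha,e}.\]

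Finally, I would check that the partition induced by the length-2 primitive loops coincides, as a random partition of $X$, with the connected components of the random subgraph whose open edges are $\{e\in\mathcal{E}:\eta_e\in\mathcal{PL}_\alpha\}$. This is immediate from the definition of the cluster partition (Remark following Figure \ref{figcluster}): a length-2 loop $\eta_e$ visits precisely the two endpoints of $e$, so two vertices are linked by a chain of length-2 loops iff they are connected by a path of open edges in the above random subgraph. This random subgraph has, by the previous step, the law of Bernoulli percolation with parameters $\{s_{\alpha,e}\}_{e\in\mathcal{E}}$, yielding the claim.

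There is essentially no obstacle beyond bookkeeping; the only minor care needed is in the computation of $\mu(\eta_e)$ (making sure the orbit-size factor cancels the $1/n$ in the definition) and in verifying primitivity of the length-2 loops. Once those are settled the result follows directly from Proposition \ref{primitivedistrib}.
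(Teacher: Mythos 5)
Your proof is correct and follows essentially the same route as the paper: both rest on the product-measure structure of $\mathcal{PL}_{\alpha}$ from Proposition \ref{primitivedistrib} and on identifying the length-2 primitive loops with edges, the only cosmetic difference being that the paper recomputes $\mu(\ell,\ \pi\ld=\eta_{x,y})=-\log(1-P_{x,y}P_{y,x})$ directly while you plug $\mu(\eta_e)=P_{x,y}P_{y,x}$ into the formula $1-(1-\mu(\eta))^{\alpha}$. Your bookkeeping (orbit size $2$ cancelling the $1/2$, primitivity of length-2 loops, and the identification of the induced partition with the percolation clusters) is accurate.
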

\begin{proof}
Let $\mathcal{PA}_{\alpha}$ denote the set of primitive discrete loops of length $2$ at time $\alpha$. The law  $\mathcal{PA}_{\alpha}$ for every $\alpha$ is a product measure $\nu$ on $\{0,1\}^{\mathcal{E}}$: for every edge $\{x,y\}\in\mathcal{E}$, let $\eta_{x,y}$ denote the class of the based loop $(x,y)$. The probability that a loop in $\mathcal{DL}_{\alpha}$ has $\{x,y\}$ as support  is \[\nu(\omega,\ \omega_{\{x,y\}}=1)=1-\exp\big(-\alpha\mu(\ell,\; \pi\ld=\eta_{x,y})\big) \]
and \[\mu(\ell,\; \pi\ld=\eta_{x,y})=\sum_{k=1}^{+\infty}\frac{1}{k}(P_{x,y}P_{y,x})^k = - \log(1-P_{x,y}P_{y,x}).\]
\end{proof}
 Bernoulli percolation clusters  on a graph appear to be a limiting case of
partitions defined by loop clusters in which only two points loops contribute
asymptotically. 

\begin{proposition}[\cite{LeJanCRAS12}]
Let us consider a finite graph $\mathcal{G}=(X,\mathcal{E})$ endowed with unit conductances and a uniform killing measure with intensity $\kappa>0$.
Let $\mathcal{C}^{(\kappa)}_{\alpha}$ be the partition induced by the Poisson loop set on $\mathcal{G}$ at time $\alpha$.   Fix $u>0$.\\
If  $\kappa$ and $\alpha$ tend to $+\infty$ such that  $\frac{\alpha}{\kappa^{2}}$ converges to   $u$, then  $\mathcal{C}^{(\kappa)}_{\alpha}$ converges in law towards the Bernoulli percolation of parameter $1-e^{-u}$. 
\end{proposition}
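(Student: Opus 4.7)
My plan is to reduce to Lemma \ref{bernpercolation} by showing that, in the scaling $\alpha/\kappa^2 \to u$, asymptotically only primitive loops of length $2$ contribute to the partition, and that on those loops the edge-probabilities converge to $1-e^{-u}$.

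First, let $\mathcal{B}^{(\kappa)}_\alpha$ denote the partition of $X$ induced only by the primitive loops of length $2$ present in $\mathcal{DL}_\alpha$. By Lemma \ref{bernpercolation}, $\mathcal{B}^{(\kappa)}_\alpha$ has the law of Bernoulli percolation with parameters $s_{\alpha,e}=1-(1-P_{x,y}P_{y,x})^\alpha$ for $e=\{x,y\}\in\mathcal{E}$. Here $P_{x,y}P_{y,x}=\frac{1}{(\deg(x)+\kappa)(\deg(y)+\kappa)}$, so $\alpha P_{x,y}P_{y,x}\to u$ and hence $s_{\alpha,e}\to 1-e^{-u}$ for every edge $e$. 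Since distinct edges correspond to distinct primitive $2$-loops and the events $\{\eta\in\mathcal{PL}_\alpha\}$ are independent by Proposition \ref{primitivedistrib}, the joint law of the open-edge indicators of $\mathcal{B}^{(\kappa)}_\alpha$ converges to the product $\mathrm{Bernoulli}(1-e^{-u})^{\otimes\mathcal{E}}$; hence $\mathcal{B}^{(\kappa)}_\alpha$ converges in law to the Bernoulli percolation of parameter $1-e^{-u}$.

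Next I would show that $\mathcal{C}^{(\kappa)}_\alpha=\mathcal{B}^{(\kappa)}_\alpha$ with probability tending to $1$. Since $\mathcal{C}^{(\kappa)}_\alpha$ is built from all loops whereas $\mathcal{B}^{(\kappa)}_\alpha$ uses only primitive $2$-loops, the two differ only on the event $A^{(\kappa)}_\alpha=\{\exists\,\ell\in\mathcal{DL}_\alpha,\ \mathrm{length}(\ell)\geq 3\}$. Using the Poisson structure,
\[
\Pd(A^{(\kappa)}_\alpha)\leq \alpha\sum_{k\geq 3}\mu(\mathrm{length}=k)=\alpha\sum_{k\geq 3}\frac{1}{k}\mathrm{tr}(P^k).
\]
Because $P$ is irreducible with positive killing, its spectral radius satisfies $\rho(P)\leq \max_x\frac{\deg(x)}{\deg(x)+\kappa}\leq\frac{\Delta}{\Delta+\kappa}=O(\kappa^{-1})$, where $\Delta=\max_x\deg(x)<\infty$ (the graph is finite). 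Consequently $\mathrm{tr}(P^k)\leq |X|\,\rho(P)^k$, and
\[
\Pd(A^{(\kappa)}_\alpha)\leq \alpha\,|X|\sum_{k\geq 3}\frac{\rho(P)^k}{k}=O\!\left(\alpha\,\kappa^{-3}\right)=O(\kappa^{-1})\longrightarrow 0.
\]
On the complement of $A^{(\kappa)}_\alpha$, every loop in $\mathcal{DL}_\alpha$ has length $2$, so $\mathcal{C}^{(\kappa)}_\alpha=\mathcal{B}^{(\kappa)}_\alpha$; combining with the previous step yields convergence of $\mathcal{C}^{(\kappa)}_\alpha$ to Bernoulli percolation of parameter $1-e^{-u}$.

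The main obstacle, and the step most worth writing carefully, is the tail bound on loops of length $\geq 3$: one must be explicit that the spectral radius of $P$ shrinks like $1/\kappa$ (so that the geometric series in $k$ converges with room to spare and gains the extra factor $\kappa^{-1}$ that defeats $\alpha\sim u\kappa^2$). Once that estimate is in place the rest is essentially bookkeeping, using only the explicit product-form for the law of $\mathcal{PL}_\alpha$ from Proposition \ref{primitivedistrib} and the identification of length-$2$ contributions supplied by Lemma \ref{bernpercolation}.
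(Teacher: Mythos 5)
Your argument is correct, and it takes a genuinely different route from the paper. The paper works at the level of the determining probabilities $\Pd(\mathcal{C}^{(\kappa)}_{\alpha}\preceq\pi)$: it invokes Lemma \ref{semigroupexit} to write this as $\det(\mathcal{H}^{(\pi)})^{\alpha}$, observes that the off-diagonal entries of $\mathcal{H}^{(\pi)}$ are $\sim\kappa^{-1}$ on edges of $L(\pi)$ and $O(\kappa^{-2})$ otherwise, and uses a second-order expansion of $\log\det=\mathrm{Tr}\log$ to get $\alpha\log\det(\mathcal{H}^{(\pi)})\to -u|L(\pi)|$, matching the characterization $\Pd(\mathcal{P}(1-e^{-u})\preceq\pi)=e^{-u|L(\pi)|}$. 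You instead argue pathwise: by Proposition \ref{primitivedistrib} and Lemma \ref{bernpercolation} the partition generated by the primitive $2$-loops is exactly an (inhomogeneous) Bernoulli percolation with parameters $s_{\alpha,e}=1-(1-P_{x,y}P_{y,x})^{\alpha}\to 1-e^{-u}$, and a first-moment bound $\Pd(\exists\,\ell\in\mathcal{DL}_{\alpha},\ \mathrm{length}(\ell)\geq 3)\leq\alpha\sum_{k\geq 3}\frac{1}{k}\mathrm{tr}(P^{k})=O(\alpha\kappa^{-3})\to 0$ (using $\rho(P)\leq\Delta/(\Delta+\kappa)$, valid since the graph is finite and the row sums bound the spectral radius) shows that with probability tending to one every loop has length $2$, so $\mathcal{C}^{(\kappa)}_{\alpha}$ coincides with that Bernoulli partition off an asymptotically negligible event. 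All steps check out: the inclusion $\{\mathcal{C}^{(\kappa)}_{\alpha}\neq\mathcal{B}^{(\kappa)}_{\alpha}\}\subset A^{(\kappa)}_{\alpha}$ is the right one-sided comparison, the Poisson bound $1-e^{-\alpha m}\leq\alpha m$ is correct, and on a finite configuration space the convergence of the independent edge marginals gives convergence in law of the partition. What each approach buys: yours makes rigorous, in the strong form of a total-variation-type comparison, the paper's heuristic remark that ``only two points loops contribute asymptotically,'' and is more elementary; the paper's determinant computation avoids any loop-length truncation and fits into its general semigroup machinery (and would also apply verbatim under variants where the entrywise asymptotics of $\mathcal{H}^{(\pi)}$ are available but a clean length-$2$ reduction is less immediate). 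One cosmetic point: your $\mathcal{B}^{(\kappa)}_{\alpha}$ should be the partition induced by the set $\mathcal{PA}_{\alpha}$ of primitive $2$-loops $\eta$ with $\pi\ell=\eta$ for some $\ell\in\mathcal{DL}_{\alpha}$ (as in Lemma \ref{bernpercolation}), not only by the length-$2$ elements of $\mathcal{DL}_{\alpha}$ themselves; the two notions agree on the complement of $A^{(\kappa)}_{\alpha}$ and both parameters converge to $1-e^{-u}$, so either reading yields the result, but citing the lemma fixes the first reading.
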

\begin{proof}
For a partition  $\pi=\{B_i,\; i\in I\}$ of $X$,  let $L(\pi)$ denote the set of edges of $\mathcal{E}$ linking different blocks 
of $\pi$. 
The law of the Bernoulli percolation of parameter $1-e^{-u}$ denoted by $\mathcal{P}(1-e^{-u})$  is
characterized by the identities:%
\[
\Pd(\mathcal{P}(1-e^{-u})\preceq\pi)=e^{-u\left|  L(\pi)\right|} \text{ for every  partition }\pi \text{ of }X.   %
\]
To prove the convergence of $\Pd(\Ca^{(\kappa)}\preceq\pi)$,  we apply Lemma \ref{semigroupexit}: $\Pd(\Ca^{(\kappa)}\preceq\pi)=\det(\mathcal{H}^{(\pi)})^{\alpha}$  where $\mathcal{H}^{(\pi)}$ is defined as in Lemma \ref{semigroupexit}. 
We note then that $\mathcal{H}^{(\pi)}_{x,y}$ is equivalent to $\kappa^{-1}$ if
$\{x,y\}$ belongs to $L(\pi)$ and  of order less or equal to
$\kappa^{-2}$\ otherwise. Indeed, if $x\in \partial B_{i}$ and $y\in \partial B_{j}$ for $i\neq j$,
$H^{(B_{i})}_{x,y}=\sum_{z\in B_{i}}P_{x,z}H^{(B_{i})}_{z,y}+P_{x,y}$
and $P_{i,j}\leq\frac{1}{\kappa}$ for all $(i,j)\in X\times X$.\\
A second-order Taylor expansion shows that $\log(\det(\mathcal{H}^{(\pi)}))=Tr(\log(\mathcal{H}^{(\pi)}))$ is equivalent to $-\frac{1}{2}Tr(Q^{2})$, with
$Q_{x,y}=\kappa^{-1}\un_{\{\{x,y\}\in L(\pi)\}}$.
\end{proof}
\subsection{Loop percolation on $\ZZ^{d}$ with $d\geq 2$}
Let us consider the Poisson loop process induced by the simple random walk on $\ZZ^{d},\;d\geq 2$,  killed at a constant rate
$\kappa>0$: $P_{x,x+u}=\frac{1}{2d+\kappa}$ for every $x\in\ZZ^{d}$ and $u\in\{\pm 1\}^d$. 
 Let $\theta(\alpha,\kappa)$ denote the probability of percolation at time $\alpha$ \textit{i.e.} the probability 
of any fixed point to be connected to infinity by an open path at time $\alpha$. 
The following Proposition presents some properties of the function $(\alpha,\kappa)\mapsto \theta(\alpha,\kappa)$: 

\begin{proposition}\label{probapercol}
Let $p_{c}$ denote the critical probability for  bond percolation on $\ZZ^d$ ($d\geq 2$). 
\begin{itemize}
\item[(i)] $\theta(\alpha,\kappa)$ is an increasing function of $\alpha$ and a decreasing function of $\kappa$.
\item[(ii)] $\theta(\alpha,\kappa)>0$ for every $\alpha>0$ and $\kappa>0$ such that $(1-\frac{1}{(2d+\kappa)^2})^{\alpha}<1-p_{c}$. 
\item[(iii)] For any $\alpha>0$, $\theta(\alpha,\kappa)$ vanishes for $\kappa$ large enough.
\end{itemize}
\end{proposition}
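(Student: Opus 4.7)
The plan is to use monotone couplings of Poisson processes for (i), compare the loop cluster model with Bernoulli percolation for (ii), and dominate the cluster exploration by a subcritical Galton--Watson process for (iii).

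For (i), the monotonicity in $\alpha$ is built into the construction: $\DL_\alpha$ is the projection of $\mathcal{DP}\cap ([0,\alpha]\times\DL(X))$, so the family $(\DL_\alpha)_{\alpha\geq 0}$ is increasing, $\Ca$ is coarser as $\alpha$ grows and $\theta(\alpha,\kappa)$ is non-decreasing in $\alpha$. For the dependence in $\kappa$, observe that the per-based-loop weight ${\mup(\dot\ell)=\tfrac{1}{n}P_{x_1,x_2}\cdots P_{x_n,x_1}}$ with $P_{x,y}=(2d+\kappa)^{-1}$ is pointwise non-increasing in $\kappa$. A thinning of the underlying Poisson process $\mathcal{DP}$ then yields a monotone coupling such that ${\DL^{(\kappa_2)}_\alpha\subseteq \DL^{(\kappa_1)}_\alpha}$ for every $\kappa_1\leq\kappa_2$, so $\theta(\alpha,\kappa_2)\leq \theta(\alpha,\kappa_1)$.

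For (ii), by Proposition \ref{primitivedistrib} the indicators that distinct primitive length-$2$ loops belong to $\mathcal{PL}_\alpha$ are independent. The argument of Lemma \ref{bernpercolation} therefore extends to the infinite lattice, and the subgraph formed by the edges carrying a primitive length-$2$ loop is a Bernoulli bond percolation configuration on $\ZZ^d$ with parameter $s=1-(1-(2d+\kappa)^{-2})^\alpha$. The condition $(1-(2d+\kappa)^{-2})^\alpha<1-p_c$ is exactly $s>p_c$, so this Bernoulli percolation has an infinite cluster almost surely. Since every edge open in the $2$-loop subgraph is \emph{a fortiori} open in $\mathcal{G}_\alpha$, an infinite loop cluster exists with positive probability, which gives $\theta(\alpha,\kappa)>0$.

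For (iii), I will dominate the cluster of the origin by a subcritical Galton--Watson branching process. Explore the cluster by breadth-first search: process vertices one at a time; when processing $v$, reveal the loops of $\DL_\alpha$ that visit $v$ and have not yet been revealed, and append all their vertices to the cluster. Conditionally on the past, the Poisson restriction property implies that the set of newly revealed loops at $v$ is stochastically dominated by an independent Poisson point process on $\DL(X)$ with intensity $\alpha\,\mu(\,\cdot\,,\ell\ni v)$, so the number of ``children'' of $v$ is dominated by ${\sum_\ell (|\ell|-1)}$ over such an auxiliary process. Since every equivalence class of loops through $v$ admits at least one based representative starting at $v$, one has $\mu(\ell,\ell\ni v,|\ell|=n)\leq [P^n]_{v,v}$, and the $L^\infty$-norm bound $[P^n]_{v,v}\leq(2d/(2d+\kappa))^n$ yields
\[
\Ed[\text{children of }v]\leq \alpha\sum_{n\geq 2}(n-1)\Big(\frac{2d}{2d+\kappa}\Big)^n=\frac{4d^2\alpha}{\kappa^2}.
\]
This tends to $0$ as $\kappa\to+\infty$, so for $\kappa$ large enough the dominating Galton--Watson process has mean offspring strictly less than $1$; its total progeny is almost surely finite, hence the cluster of the origin is almost surely finite and $\theta(\alpha,\kappa)=0$.

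The main technical obstacle lies in part (iii): a single loop visits many vertices, so the naive offspring variables are correlated. One must carefully order the BFS exploration vertex by vertex and invoke the Poisson restriction theorem at each step to obtain \emph{stochastic} domination by an i.i.d.~Galton--Watson tree, rather than trying to establish genuine independence.
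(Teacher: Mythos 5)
Your parts (i) and (ii) follow the paper's route: monotonicity in $\alpha$ from the increasing family $\DL_\alpha$, monotonicity in $\kappa$ by the same length-dependent thinning (retain a length-$k$ loop of the $\kappa_1$-ensemble with probability $\big(\tfrac{2d+\kappa_1}{2d+\kappa_2}\big)^k$ for $\kappa_1\leq\kappa_2$), and the comparison with Bernoulli percolation through the length-two primitive loops of Lemma \ref{bernpercolation}, whose extension to $\ZZ^d$ is indeed immediate from Proposition \ref{primitivedistrib}. Part (iii), however, is a genuinely different and correct argument. The paper fixes a self-avoiding path of length $L$, bounds the probability that it is open by grouping the covering loops into a partition with blocks of size at least two, applies Campbell's formula and the moment identity for occupation numbers to recognize $\Per^0_\alpha(G_{x_i,x_j})$, and then bounds this permanent using $\sum_{y\neq 0}\lambda_y G_{0,y}=2d/\kappa$, yielding $\Pd(x\ \text{open})\leq(2d\max(\alpha,1)/\kappa)^L$ and concluding by path counting. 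You instead dominate the breadth-first exploration of the cluster of the origin by a Galton--Watson tree: conditionally on the exploration history, the unrevealed loops through the current vertex form a Poisson process whose intensity is dominated by $\alpha\mu(\cdot,\ell\ni v)$, and the mean number of new vertices is at most $\alpha\sum_{n\geq2}(n-1)[P^n]_{v,v}\leq 4d^2\alpha/\kappa^2$, so the cluster is a.s.\ finite once $\kappa>2d\sqrt{\alpha}$; the intermediate bounds $\mu(\ell\ni v,|\ell|=n)\leq [P^n]_{v,v}\leq(2d/(2d+\kappa))^n$ and the geometric series computation are correct. What each approach buys: the paper's permanent computation handles the correlations between the loops covering a given path in closed form and needs no exploration formalism, while your argument avoids $\alpha$-permanents entirely at the price of the conditional Poisson-restriction/stochastic-domination step you rightly flag as the delicate point (it does go through, since the next vertex to process is measurable with respect to the revealed loops). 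Your bound also gives a threshold of order $\sqrt{\alpha}$ for large $\alpha$, matching the order of the lower bound implied by (ii), whereas the path-counting bound gives $\kappa^c(\alpha)\leq 4d^2\max(\alpha,1)$.
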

\begin{proof}\ 
\begin{itemize}
 \item[(i)] $\theta(\alpha,\kappa)$ is an increasing function of $\alpha$ since $\alpha \mapsto \mathcal{DL}^{(\kappa)}_{\alpha}$ is increasing. \\
To show that $\theta(\alpha,\kappa)$ is  a decreasing function of $\kappa$, we use an independent thinning procedure. 
Let $\kappa_{1}>\kappa_{2}>0$. The corresponding measures on based loops satisfy: 
\[\mup^{(\kappa_j)}( \ld=(x_1,\ldots,x_k))=\frac{1}{k}\Big(\frac{1}{2d+\kappa_j}\Big)^{k}\un_{\{x_{i+1}-x_{i}\in\{\pm 1\}^d\; \forall i\in\enu{k}\}}\] 
for $j\in\{1,2\}$,  $k\geq 2$ and $x_1,\ldots,x_k\in \ZZ^d$ with the convention ${x_{k+1}=x_{1}}$. By erasing independently each based loop $\ld\in\DLp_{\alpha}^{(\kappa_{2})}$ of length $k\geq 2$ with probability $1-(\frac{2d+\kappa
_{2}}{2d+\kappa_{1}})^{k}$, we obtain a discrete loop set having the same distribution  as $\mathcal{DL}^{(\kappa_1)}_{\alpha}$. 
\item[(ii)] By Lemma \ref{bernpercolation}, the partition induced by the set of primitive discrete loops of length 2 at time $\alpha$ has the same law as the Bernoulli percolation with parameter ${1-(1-\frac{1}{(2d+\kappa)^2})^{\alpha}}$. It follows from  Bernoulli percolation on $\ZZ^d$ that if  $1-(1-\frac{1}{(2d+\kappa)^2})^{\alpha}>p_{c}$ then ${\theta(\alpha,\kappa)>0}$.  
\item[(iii)]
To prove that $\theta(\alpha,\kappa)$ vanishes for $\kappa$ large enough, we show that there exists a finite real $C_{\alpha}>0$ such that  any self-avoiding path  $x=(x_{1}, x_{2}, \ldots, x_{L})$ of  length $L\in2\NN^*$ is open at time $\alpha$ with probability less than
$(\frac{C_{\alpha}}{\kappa})^{L}$. We can then conclude with the usual path-counting argument: for every $L\in 2\NN^*$,  $\theta(\alpha,\kappa)$ is bounded above by the probability that there exists an open  self-avoiding path of length $L$ at time $\alpha$ starting from the origin,  hence \[\theta(\alpha,\kappa)\leq \limsup_{L\rightarrow +\infty}(2d)^L(\frac{C_\alpha}{\kappa})^{L}=0\quad\text{for}\quad \kappa>2dC_\alpha.\]
Let us first introduce some notations.
\begin{itemize} 
\item Let $\mathfrak{P}(2,\enu{L})$ consist  of partitions of $\enu{L}$ in which all blocks have at least two elements (the number of  blocks of such a partition  $\pi$  is denoted by $|\pi|$ and  blocks are denoted by $\pi_1$, $\pi_2$, ...). 
\item For a vertex $v$ and a loop $\ell\in \DL(\ZZ^d)$, let $N_{v}(\ell)$ denote the number of times $\ell$ passes through $v$: $N_{v}(\ell)=\sum_{i=1}^{k}\un_{\{u_i=v\}}$ if $\ell$ is the class of the based loop $(u_1, \ldots,u_k)$. 
\end{itemize}
Let us consider a self-avoiding path of length $L\in2\NN^*$ denoted by ${x=(x_{1}, x_{2}, \ldots, x_{L})}$   and let $\mathcal{E}_x$ denote the set of edges $\{x_{2i-1},x_{2i}\},\ 1\leq i\leq L/2$.   \\
 To be open, the edges of  $x$ have to be covered by
the edges of $N\leq L$ loops of $\mathcal{DL}_{\alpha}$. Among these loops,  those that cover at least one edge $e\in\mathcal{E}_x$ can be used to define a partition $\pi\in\mathfrak{P}(2,\{1,\ldots,L\})$ as follows: let $\ell_1$ be a loop in $\DL_{\alpha}$ covering edge $\{x_1,x_2\}$. The first block $\pi_1$ of $\pi$ consists of the indices of the endpoints of $e\in\mathcal{E}_x$ covered by $\ell_1$. If $\pi_1\neq \{1,\ldots,L\}$, let $j$ be the smallest integer $i$ such that $x_i$ is not in $\pi_1$ and let $\ell_2$ be a loop  covering  $\{x_{j},x_{j+1}\}$. The second block $\pi_2$ of $\pi$ is defined  as the set of indices of endpoints of  $e\in\mathcal{E}_x$ covered by $\ell_2$ that are not in $\pi_1$, and so on until all elements of $\enu{L}$ belong to a block written down (an example is presented in Figure \ref{path}). Therefore, 
\begin{align*}
\un_{\{x\;\text{open at time}\;\alpha\}}  &  
  \leq\sum_{\pi\in \mathfrak{P}(2,\enu{L})}\Big(\sum_{\substack{\ell_{1}%
, \ldots,\ell_{\left|  \pi\right|  }\in\mathcal{DL}_{\alpha}\\ \text{pairwise distinct}}}%
\prod_{j=1}^{\left|  \pi\right|  }\prod_{i\in\pi_{j}}\un_{\{N_{x_{i}}(\ell_{j})>0\}}\Big)\\
&\leq\sum_{\pi\in \mathfrak{P}(2,\enu{L})}\Big(\sum_{\substack{\ell_{1}%
, \ldots,\ell_{\left|  \pi\right|  }\in\mathcal{DL}_{\alpha}\\ \text{pairwise distinct}}}%
\prod_{j=1}^{\left|  \pi\right|  }\prod_{i\in\pi_{j}}N_{x_{i}}(\ell_{j})\Big).
\end{align*}
\begin{figure}[htb]
\fbox{%
\begin{minipage}{0.4\textwidth}
 \includegraphics[scale=0.5]{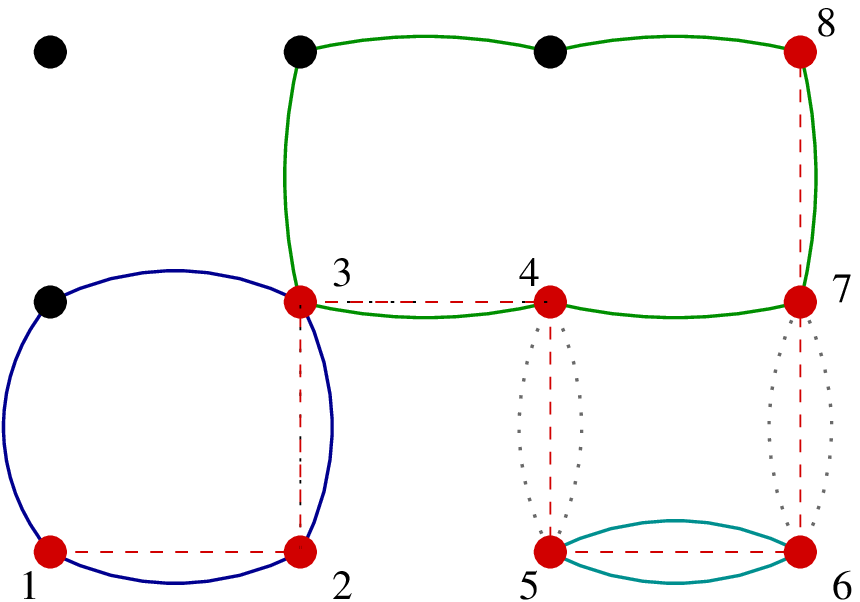}
\end{minipage}
\begin{minipage}{0.55\textwidth}
 Example of an open path $x=(1,2,3,4,5,6,7,8)$ with five loops covering its edges.  Three of them (drawn in solid lines) cover the set of edges $\mathcal{E}_x$; we associate to these three loops, the partition $\pi\in\mathfrak{P}(2,\{1,\ldots,8\})$ with three blocks $\pi_1=\{1,2\}$, $\pi_{2}=\{3,4,7,8\}$ and $\pi_{3}=\{5,6\}$.  
\end{minipage}}
\caption{\label{path}}
\end{figure}

By Campbell's
formula (equation (4.2) page 36 in \cite{stfl}), for every $k\in\NN^*$ and for every positive functions $F_1,
\ldots, F_k$
\[\Ed\Big(\sum_{\substack{\ell_1,\ldots,\ell_k\in\La,\\ \text{pairwise  distinct}}}\prod_{i=1}^{k}F_i(\ell_i)\Big)=\prod_{i=1}^{k}\Big(\int F_i(\ell)\alpha d\mu(\ell)\Big).\] Therefore,  %
\[\Pd(x\;\text{is open at time}\;\alpha)   \leq\sum_{\pi\in \mathfrak{P}(2,\enu{L})}\prod_{j=1}^{\left|  \pi\right|  }\int\Big(\prod_{i\in\pi_{j}}%
N_{x_{i}}(\ell)\Big)\alpha\mu(d\ell).\]
For a finite set $A$, let $\mathfrak{S}_A$ denote the set of permutations of elements of $A$. 
By Proposition 5 page 20 in \cite{stfl}, for every $k\geq 2$ and for every vertices $y_1,\ldots,y_k$,  \[\int \prod_{i=1}^{k}N_{y_i}(\ell)d\mu(\ell)=\frac{1}{k}\Big(\prod_{i=1}^{k}\lambda_{y_i}\Big)\!\!\sum_{\sigma\in \mathfrak{S}_{\enu{k}}}\!\!\!G_{y_{\sigma(1)},y_{\sigma(2)}}G_{y_{\sigma(2)},y_{\sigma(3)}}\ldots G_{y_{\sigma(k)},y_{\sigma(1)}}\]
Therefore,
$\displaystyle{\Pd(x\;\text{is open at time}\;\alpha)   \leq\Big(\prod_{i=1}^{L}\lambda_{x_i}\Big)S_{\alpha}(x)}$ where \begin{equation}
\label{eqSalpha}
S_{\alpha}(x)=\sum_{\pi\in \mathfrak{P}(2,\enu{L})}\alpha^{|\pi|}\prod_{j=1}^{\left|\pi\right|}\Big(\frac{1}{|\pi_j|}\sum_{\sigma\in \mathfrak{S}_{\pi_j}}G_{x_{\sigma(1)},x_{\sigma(2)}}G_{x_{\sigma(2)},x_{\sigma(3)}}\ldots G_{x_{\sigma(|\pi_j|)},x_{\sigma(1)}}\Big).
 \end{equation}
The blocks of a partition $\pi\in\mathfrak{P}(2,\enu{L})$ in \eqref{eqSalpha} can be seen as the orbits of a permutation without fixed point. Since a circular order on $k$ integers corresponds to $k$ different permutations of these integers, $S_{\alpha}(x)$ can be rewritten as follows: 
\begin{equation}
\label{per0}
S_{\alpha}(x)=\sum_{\sigma
\in\mathfrak{S}_{\enu{L}}^{0}}\alpha^{m(\sigma)}G_{x_{1},x_{\sigma(1)}}\ldots G_{x_{L},x_{\sigma(L)}}.
\end{equation}
where
\begin{itemize}
\item $m(\sigma)$ denotes the number of cycles in a permutation $\sigma$,
\item  $\mathfrak{S}_{\enu{L}}^{0}$ denotes the set of permutations of $\enu{L}$ without
fixed point (such a permutation corresponds to a configuration without isolated points).
\end{itemize}(See Lemma \ref{permanent} in the appendix for  details.)
The right-hand side of equality \eqref{per0}  is nothing but  $\Per_{\alpha}^{0}(G_{x_i,x_j},1\leq i,j\leq L)$ defined in \cite{stfl} page 41. 
To conclude we use that  \[\displaystyle{\Per^{0}_{1}(G_{x_i,x_j}, 1\leq i,j\leq L)\leq \prod_{i=1}^{L}\Big(\sum_{1\leq j\leq L,\ j\neq i}G_{x_i,x_j}\Big)\leq \Big(\sum_{y\in \ZZ^d\setminus\{0\}}G_{0,y}\Big)^L}\]  since the vertices $x_i$ are pairwise distinct. 
As $(P1)_u=\frac{2d}{2d+\kappa}$ for every $u\in\ZZ^d$,  \[\sum_{y\in \ZZ^d\setminus\{0\}}\lambda_yG_{0,y}=\sum_{k=1}^{+\infty}(P^k1)_0=\frac{2d}{\kappa}.\] Thus $\Pd(x\;\text{is open at time}\;\alpha)\leq (\frac{2d}{\kappa}\max(\alpha,1))^L$ which ends the proof. 
\end{itemize}
\end{proof}
\begin{remark}\
\begin{itemize}
 \item[(i)]
It follows from Proposition \ref{probapercol} that for every $\alpha>0$, there is a  finite value of $\kappa$, which we denote by  $\kappa^{c}(\alpha)$, above which $\theta(\alpha,\kappa)$
vanishes; Morever $\kappa^{c}$ is an increasing function that converges to $+\infty$  as $\alpha\rightarrow\infty$.
\item[(ii)] As the simple random walk on $\ZZ^2$ is recurrent, for $d=2$ the probability 
that a fixed edge is open at time $\alpha>0$ converges to $1$ as $\kappa$ tends to $0$. 
Indeed,  let us first note that  $\Pd(N_{x}^{(\alpha)}=0)=\Big(\frac{1}{\lambda_xG_{x,x}}\Big)^{\alpha}$ for every $x\in \ZZ^2$
(this equality has been proven for a finite graph\footnote{For a finite graph  $\mathcal{G}=(X,\mathcal{E})$, $\Pd(N_{x}^{(\alpha)}=0)$ is equal to  \[\exp\Big(-\alpha\Big(\mu(\DL(X))-\mu(\DL(X\setminus\{x\}))\Big)\Big)=\Big(\frac{\det(G^{(X\setminus\{x\})})}{\det(G)\lambda_x}\Big)^{\alpha}=(\lambda_xG_{x,x})^{-\alpha}.\]}, so we apply it to the restriction of the random walk to $\ZZ^2\cap[-M, M]^2$ and let $M$ tend to $+\infty$). Therefore $\Pd(N_{x}^{(\alpha)}>0)$  converges to $1$ as $\kappa$ tends to~$0$.\\
 Fix $x\in \ZZ^2$ and $u\in\{\pm 1\}^2$.  By symmetry, \[\Pd(N_{x}^{(\alpha)}>0)=\sum_{v\in\{\pm 1\}^{2}}\Pd(N_{x,x+v}^{(\alpha)}>0)=4\Pd(N^{(\alpha)}_{x,x+u}>0).\] 
 Since $\alpha\mapsto \mathcal{DL}_{\alpha}$ has independent stationary increments,  \[\Pd(N_{x,x+u}^{(\alpha)}=0)=\mathbb{P}(N_{x,x+u}^{(\alpha/n)}=0)^{n}\quad\text{for every}\quad n>0.\]
Therefore, 
\begin{equation}
\label{probaedgezd}
\Pd(N^{(\alpha)}_{x,x+u}>0)=1-\Pd(N^{(\alpha/n)}_{x,x+u}=0)^{n}\geq 1-(1-\frac{1}{4}\Pd(N_{x}^{(\alpha/n)}>0))^{n}.
\end{equation}
For a fixed $\varepsilon>0$, let us choose $n$ such that $(1-\frac{1}{8})^n\leq \varepsilon$. 
There exists $\kappa_{\varepsilon}$ such that for $\kappa\leq \kappa_{\epsilon}$,   $\mathbb{P}(N_{x}^{(\alpha/n)}=0)\leq \frac{1}{2}$. It follows from \eqref{probaedgezd} that 
$\mathbb{P}(N_{x,x+u}^{(\alpha)}>0)\geq 1-\varepsilon$.
\end{itemize}
\end{remark}

\section{Complete graph}
This section is devoted to the study of Poisson loop sets on the complete graph $K_n$ 
endowed with unit conductances and a 
uniform killing measure. The set of vertices is identified with $\enu{n}$ and the set of partitions of $\enu{n}$ is denoted by $\mathfrak{P}(\enu{n})$.   
The intensity of the killing measure is denoted by $\kappa_n=n\epsilon_n$ with $\epsilon_n>0$, hence  
the coefficients of the transition matrix $P$ are: $P_{x,y}=\frac{\un_{\{x\neq y\}}}{\lambda^{(n)}}$ 
  with $\lambda^{(n)}=n-1+n\epsilon_n$ for every $x,y\in\enu{n}$. As $n$ will vary, the loop set and the partition defined 
by the loop clusters at time $\alpha$ will be denoted by $\mathcal{DL}_{\alpha}^{(n)}$ and $\mathcal{C}_{\alpha}^{(n)}$ respectively.    
In the first part, $n$ is fixed; We present another construction of the coalesecent process  
$(\mathcal{C}^{(n)}_{\alpha})_{\alpha\geq 0}$ and use this construction to define a similar coalescent process on the interval $[0, 1]$. 
In the second part, we let $n$ tend to $+\infty$ and  
describe the distribution of the first time when $\mathcal{C}^{(n)}_{\alpha}$ has no block 
of size one (cover time) and the first time when $(\mathcal{C}^{(n)}_{\alpha})_{\alpha}$ has only one block (coalescent time). 
\subsection{Another construction of the coalescent process}
\begin{proposition}\label{transitionKn}
From state $\pi=\{B_i,\; i\in I\}\in \mathfrak{P}(\enu{n})$, the only possible transitions of $(\mathcal{C}^{(n)}_{\alpha})_ {\alpha\geq 0}$ are to partitions $\pi^{\oplus J}$ where $J=\{j_1,\ldots,j_{L}\}$ is a subset of  $I$ with $L\geq 2$ elements. Its transition rate from $\pi$ to  $\pi^{\oplus J}$ is equal to: 
\begin{align}
\tau^{(n)}_{\pi,\pi^{\oplus J}}&=\sum_{k\geq L}\frac{1}{kn^k(\epsilon_n+1)^k}\sum_{(i_1,\ldots,i_k)\in W_k(J)} \prod_{u=1}^{k}|B_{i_u}|\label{eqtransitionKn1}\\
&=\sum_{k\geq L}\frac{1}{kn^k(\epsilon_n+1)^k}\sum_{\substack{(k_1,\ldots,k_L)\in(\NN^*)^{L},\\k_1+\ldots +k_{L}=k}}\binom{k}{k_1,\ldots,k_{L}}\prod_{u=1}^{L}|B_{j_u}|^{k_{u}}\label{eqtransitionKn2}
\end{align}
where $W_k(J)$ is the set of $k$-tuples of  $J$ in which each element of $J$ appears. 
\end{proposition}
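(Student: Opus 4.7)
The plan is to start from the first line of equation \eqref{eqtransrate} and combine an inclusion-exclusion argument with the rank-one structure of the conductance matrix on $K_n$. Writing
$$\tau^{(n)}_{\pi,\pi^{\oplus J}} = \mu\big(\ell \inter B_j\;\forall j\in J,\ \ell\ninter B_u\;\forall u\not\in J\big),$$
inclusion-exclusion on the ``hits every $B_j$'' event yields
$$\tau^{(n)}_{\pi,\pi^{\oplus J}} = \sum_{K\subseteq J}(-1)^{|K|}\mu(\DL(F_K)),\qquad F_K := \bigcup_{j\in J\setminus K}B_j.$$
For a subset $F$ of size $m$, the paper gives $\mu(\DL(F)) = \log\bigl(\det(G^{(F)})\prod_{x\in F}\lambda_x\bigr)$. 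On the complete graph the restriction of the conductance matrix to $F$ equals $J_m-I_m$ (with $J_m$ the $m\times m$ all-ones matrix), so $\lambda^{(n)}I_m - (J_m - I_m) = M I_m - J_m$, where $M := n(\epsilon_n+1) = \lambda^{(n)}+1$. The eigenvalues of $M I_m - J_m$ are $M-m$ (simple) and $M$ (with multiplicity $m-1$), giving $\det(G^{(F)}) = [M^{m-1}(M-m)]^{-1}$ and therefore
$$\mu(\DL(F)) = m\log(M-1) - (m-1)\log M - \log(M-m).$$

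Substituting with $m_K := |F_K| = \sum_{j\in J\setminus K}|B_j|$, the hypothesis $L\geq 2$ together with identity \eqref{sumalt} (applied with $a_j=|B_j|$) gives $\sum_{K\subseteq J}(-1)^{|K|}m_K=0$, while $\sum_{K\subseteq J}(-1)^{|K|}=0$ for $|J|\geq 1$. The $\log(M-1)$ and $\log M$ contributions therefore cancel, leaving
$$\tau^{(n)}_{\pi,\pi^{\oplus J}} = -\sum_{K\subseteq J}(-1)^{|K|}\log(M-m_K).$$
Splitting $\log(M-m_K)=\log M+\log(1-m_K/M)$ eliminates the remaining $\log M$ constant by the same vanishing of $\sum_K(-1)^{|K|}$, and expanding $-\log(1-x)=\sum_{k\geq 1}x^k/k$ produces
$$\tau^{(n)}_{\pi,\pi^{\oplus J}} = \sum_{k\geq 1}\frac{1}{kM^k}\sum_{K\subseteq J}(-1)^{|K|}m_K^k.$$

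To conclude, I reindex via $S=J\setminus K$, expand $\bigl(\sum_{j\in S}|B_j|\bigr)^k = \sum_{(i_1,\ldots,i_k)\in S^k}\prod_u|B_{i_u}|$, and swap summation order; the alternating sum over $S$ forces the tuple $(i_1,\ldots,i_k)$ to cover every element of $J$, yielding $\sum_{K\subseteq J}(-1)^{|K|}m_K^k = \sum_{(i_1,\ldots,i_k)\in W_k(J)}\prod_u|B_{i_u}|$, which vanishes for $k<L$ since $|W_k(J)|=0$ there. Since $M^k = n^k(\epsilon_n+1)^k$, this proves \eqref{eqtransitionKn1}. Formula \eqref{eqtransitionKn2} then follows by partitioning $W_k(J)$ according to the multiplicity profile $(k_1,\ldots,k_L)\in(\NN^*)^L$ of each ordered tuple: each profile accounts for $\binom{k}{k_1,\ldots,k_L}$ sequences of weight $\prod_u|B_{j_u}|^{k_u}$. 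The main technical subtlety is the two-stage cancellation of the logarithmic constants $\log(M-1)$ and $\log M$, which crucially relies on $L\geq 2$ through identity \eqref{sumalt}; otherwise the explicit series representation could not be obtained.
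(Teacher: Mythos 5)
Your proposal is correct and follows essentially the same route as the paper: it reduces to the alternating-logarithm formula of Example \ref{extransKn} (which you re-derive from \eqref{eqtransratecrible}, computing $\det(G^{(F)})$ by eigenvalues instead of invoking Lemma \ref{detmatrix}), expands $-\log(1-x)$, and establishes the key identity $\sum_{K\subseteq J}(-1)^{|K|}m_K^k=\sum_{(i_1,\ldots,i_k)\in W_k(J)}\prod_u|B_{i_u}|$, which is exactly the paper's \eqref{cribleaux}. The only difference is cosmetic: you prove that identity by expanding the power and swapping sums, while the paper phrases the same inclusion-exclusion probabilistically via $k$ uniform points in $\cup_{j\in J}B_j$.
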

\begin{proof}[Proof of Proposition \ref{transitionKn}]
Let us recall the expression of $\tau^{(n)}_{\pi,\pi^{\oplus J}}$ obtained in Example \ref{extransKn}: \[\tau^{(n)}_{\pi,\pi^{\oplus J}}=\sum_{I\subsetneqq J}(-1)^{|I|+1}\log\Big(1-\frac{1}{\lambda^{(n)}+1}\sum_{u\in J\setminus I}|B_u|\Big).\]
By expanding the logarithm,  we obtain:  
 \[\tau^{(n)}_{\pi,\pi^{\oplus J}}
=\sum_{k\geq 1}\frac{1}{k(\lambda^{(n)}+1)^k}\sum_{I\subsetneqq J}(-1)^{|I|}\Big(\sum_{u\in J\setminus I}|B_u|\Big)^k.
\]
To establish   
\begin{equation}
\label{cribleaux}
\sum_{I, I\subsetneqq J}(-1)^{|I|}\Big(\sum_{u\in J\setminus I}|B_u|\Big)^k=\sum_{(i_1,\ldots,i_k)\in W_k(J)} \prod_{u=1}^{k}|B_{i_u}|
\end{equation} which ends the proof of \eqref{eqtransitionKn1}, 
we make use of  the inclusion-exclusion principle. 
Let us consider the following random experiment: \emph{`we choose uniformly at random $k$ points in 
the set ${B:=\cup_{j\in J}B_j}$'} and for $j\in J$, let $E_j$ denote the event \emph{`at least one of the $k$ points falls into $B_j$'}. 
By the inclusion-exclusion principle,  the coefficient $\sum_{I\subsetneqq J}(-1)^{|I|}\Big(\sum_{u\in J\setminus I}|B_u|\Big)^k$ divided by $|B|^k$ is equal to  $\Pd(\cap_{j\in J} E_j)$. 
The probability $\Pd(\cap_{j\in J} E_j)$ can also be decomposed by introducing  
the events $A_{i,j}$   \emph{`the $i$-th point falls into  $B_j$'} for every $i\in\enu{k}$ and $j\in J$: $\Pd(\cap_{j\in J} E_{j}) =\sum_{(i_1,\ldots,i_k)\in W_k(J)}\Pd(\cap_{u=1}^{k}A_{u,i_u})$. This corresponds to the right-hand side of \eqref{cribleaux} divided by $|B|^k$. Equation \eqref{eqtransitionKn2} is obtained by rearranging the terms of the product
$\prod_{u=1}^{k}|B_{i_u}|$.  
\end{proof}

Formula \eqref{eqtransitionKn1} of the transition rate has a simple interpretation:  
if we choose $R$ points uniformly at random in $\enu{n}$ where $R$ has the logarithmic series 
distribution\footnote{The logarithmic series distribution with parameter $0<a<1$ is defined as  the measure \\
 ${\nu_n=\frac{1}{-\log(1-a)}\sum_{k=1}^{+\infty}\frac{a^k}{k}\delta_{k}}$.} with parameter $\frac{1}{\epsilon_n+1}$, the probability that at least one point falls into each block $\{B_i,\, i\in J\}$ and none outside of $\cup_{j\in J}B_j$ is equal to $(-\log(1-\frac{1}{\epsilon_n+1}))^{-1}\tau^{(n)}_{\pi,\pi^{\oplus J}}$. 
From this remark, we derive a simpler construction of $(\mathcal{C}^{(n)}_{\alpha})_{\alpha\geq 0}$:
\begin{proposition}
\label{construction}
 Let us define a $\mathfrak{P}(\enu{n})$-valued sequence $(Y_k)_k$ iteratively as follows:
\begin{itemize}
 \item $Y_0$ is a random variable with values in  $\mathfrak{P}(\enu{n})$;
\item Let $k\in \NN$. Given $Y_k$,  
\begin{itemize}
\item  we choose an integer $R\geq 2$ independent of $Y_0,\ldots,Y_n$, following the probability distribution  \[\nu=\frac{1}{\beta_{\epsilon_n}}\sum_{k\geq 2}\frac{1}{k(\epsilon_n+1)^k}\delta_{k}\quad \text{where}\quad \beta_{\epsilon_n}=-\log(1-\frac{1}{\epsilon_n+1})-\frac{1}{\epsilon_n+1};\]   
\item we choose $R$ points $U_1,\ldots,U_R$ uniformly at random on $\enu{n}$ and independently of $Y_0,\ldots,Y_k, R$; 
\item  $Y_{k+1}$ is obtained from $Y_{k}$ by merging  blocks of $Y_k$ that contain at least one of the $R$ points $U_1,\ldots, U_R$. 
\end{itemize}
\end{itemize}
Let $(Z_t)_{t\geq 0}$  be a Poisson process with intensity $\beta_{\epsilon_n}$ and independent of $(Y_k)_k$.\\
The process $(\Pi^{(n)}_t)_{t\geq 0}$ defined by $\Pi^{(n)}_t=Y_{Z_t}$ for every $t\geq 0$ 
is a continuous Markov chain:
\begin{itemize}
\item  The only possible transitions from state $\pi=(B_i)_{i\in I}$ are to partitions $\pi^{\oplus J}$ 
where $J$ is a subset of $I$ with at least two elements and its transition rate from 
$\pi$ to $\pi^{\oplus J}$  is  $\tau^{(n)}_{\pi,\pi^{\oplus J}}$ defined in Proposition \ref{transitionKn}. 
\item If $\pi$ is a partition of $\enu{n}$ with $k$ non-empty blocks $(B_1,\ldots,B_k)$ then \[\Pd_{\pi_{0}}(\Pi^{(n)}_{t}\preceq \pi)=(\frac{\epsilon_n}{\epsilon_n+1})^{t}\prod_{i=1}^{k}(1-\frac{|B_i|}{n(1+\epsilon_n)})^{-t}\un_{\{\pi_0 \preceq\pi\}}. \]
\end{itemize}
\end{proposition}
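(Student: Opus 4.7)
The plan is to exploit that time-changing a discrete-time Markov chain by an independent Poisson clock of intensity $\beta_{\epsilon_n}$ produces a continuous-time Markov chain whose transition rate from $\pi$ to $\pi'\neq\pi$ equals $\beta_{\epsilon_n}\,\Pd(Y_1=\pi'\mid Y_0=\pi)$. It therefore suffices to verify that these rates coincide with those obtained in Proposition \ref{transitionKn}; by uniqueness of the generator on the finite state space $\mathfrak{P}(\enu{n})$, the process $(\Pi^{(n)}_t)_{t\geq 0}$ and the loop-cluster coalescent $(\mathcal{C}^{(n)}_\alpha)_{\alpha\geq 0}$ will then have the same law, and the closed-form semigroup identity will be read off from Example \ref{extransKn}.

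To compute $\Pd(Y_1=\pi'\mid Y_0=\pi)$ for $\pi=(B_i)_{i\in I}$, I would observe that by construction $Y_1=\pi^{\oplus J}$ where $J\subseteq I$ is the random set of block-indices hit by the uniform sample $U_1,\ldots,U_R$; only coarsenings of $\pi$ arise, and $|J|=1$ corresponds to a null jump (the chain stays at $\pi$). For a fixed $J\subseteq I$ with $|J|=L\geq 2$, I would condition on $R=k$ (which forces $k\geq L$) and use that the $U_j$ are i.i.d.\ uniform on $\enu{n}$ to write
\[
\Pd(Y_1=\pi^{\oplus J},\ R=k\mid Y_0=\pi)=\frac{\nu(k)}{n^k}\sum_{(i_1,\ldots,i_k)\in W_k(J)}\prod_{u=1}^{k}|B_{i_u}|,
\]
since the admissible placements of the $k$ points are exactly those whose sequence of block-indices belongs to $W_k(J)$: each $B_j$, $j\in J$, is hit and no point lies outside $\cup_{j\in J}B_j$. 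Summing over $k\geq L$ and multiplying by $\beta_{\epsilon_n}$ cancels the normalising constant of $\nu$ and recovers exactly the formula \eqref{eqtransitionKn1} for $\tau^{(n)}_{\pi,\pi^{\oplus J}}$.

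With the transition rates identified, I would invoke Example \ref{extransKn} applied with $\kappa=n\epsilon_n$: the substitutions $\kappa/(n+\kappa)=\epsilon_n/(1+\epsilon_n)$ and $|B_i|/(n+\kappa)=|B_i|/(n(1+\epsilon_n))$ turn the general formula there into the stated expression for $\Pd_{\pi_0}(\Pi^{(n)}_t\preceq\pi)$. The only real obstacle is the combinatorial matching in the middle step: one must recognise the sum over ordered $k$-tuples of uniform samples that hit each block of $J$ (and no other block) as the weighted sum over $W_k(J)$ appearing in Proposition \ref{transitionKn}. The remaining steps are routine bookkeeping for a Poisson-subordinated Markov chain.
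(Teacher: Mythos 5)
Your proposal is correct and follows essentially the same route as the paper: the identification of $\beta_{\epsilon_n}\Pd(Y_1=\pi^{\oplus J}\mid Y_0=\pi)$ with $\tau^{(n)}_{\pi,\pi^{\oplus J}}$ is exactly the uniform-points interpretation of \eqref{eqtransitionKn1} that the paper gives in the remark preceding Proposition \ref{construction}, and the semigroup formula is the complete-graph example following Lemma \ref{semigrouppartition} with $\kappa=n\epsilon_n$, transferred to $\Pi^{(n)}$ through the coincidence of generators on the finite state space. The paper's detailed proof of the analogous proposition on $[0,1]$ follows the same skeleton (jump-chain probabilities plus Poisson subordination), so no genuinely different ideas are involved.
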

\subsection{A similar coalescent process on the interval $[0, 1]$}
The algorithm described in Proposition \ref{construction} can be adapted to define a  coalescent process $(\Pi_{t})_{t\geq 0}$ on the interval $[0, 1]$ such that for every partition $\pi$ of $[0,1]$ and every $t\geq 0$, $\Pd(\Pi^{(n)}_{t}\preceq \pi^{(n)})$ converges to $\Pd(\Pi_t\preceq \pi)$  if $\epsilon_n$ converges to $\epsilon>0$ and the partition $\pi^{(n)}$ of $\enu{n}$  converges to $\pi$ as $n$ tends to $+\infty$. 
\begin{proposition} 
For $k\in \NN^*$, let $\mathfrak{P}_k([0, 1])$ denote the set of partitions of $[0, 1]$ with $k$  blocks having a positive Lebesgue measure and let $\mathfrak{P}([0, 1])$ denote $\underset{k\in \NN^*}{\bigcup}\mathfrak{P}_k([0, 1])$. For a partition  $\pi=(b_i)_{i\in I}\in \mathfrak{P}([0, 1])$ and a subset  $J$ of $I$, let  $\pi^{\oplus J}$ denote the partition  obtained from $\pi$ by merging  blocks $b_i$ for $i\in J$.  Let $\epsilon$ be a positive real. 
 Let us define a $\mathfrak{P}([0, 1])$-valued sequence $(Y_n)_n$ iteratively as follows:
\begin{itemize}
 \item $Y_0$ is a random variable with values in  $\mathfrak{P}([0, 1])$;
\item Let $n\in\NN$. Given $Y_n$,  
\begin{itemize}
\item  we choose an integer $R\geq 2$ independent of $Y_0,\ldots,Y_n$ following the probability distribution \[\nu=\frac{1}{\beta_{\epsilon}}\sum_{k\geq 2}\frac{1}{k(\epsilon+1)^k}\delta_k\quad\text{where}\quad \beta_{\epsilon}=-\log(1-\frac{1}{\epsilon+1})-\frac{1}{\epsilon+1};\]   
\item we choose $R$ points $U_1,\ldots,U_R$ uniformly at random in the interval $[0, 1]$ and independently of $Y_0, \ldots, Y_n, R$;
\item $Y_{n+1}$ is obtained from $Y_{n}$ by merging  blocks of $Y_n$ that contain at least one of the $R$ points $U_1,\ldots, U_R$. 
\end{itemize}
\end{itemize}
Let $(Z_t)_{t\geq 0}$  be a Poisson process with intensity $\beta_{\epsilon}$ and independent of $(Y_n)_n$.\\
The process $(\Pi_t)_{t\geq 0}$ defined by $\Pi_t=Y_{Z_t}$ for every $t\geq 0$ is a continuous Markov chain:
\begin{itemize}
\item  its positive transition rates are from  a partition $\pi=(b_i)_{i\in I}$ to a partition $\pi^{\oplus J}$ with $J\subset I$ having at least two elements;  
the value of  such a transition rate  is   \[\tau_{\pi,\pi^{\oplus J}}:=\sum_{k\geq |J|}\frac{1}{k(\epsilon+1)^k}\sum_{\substack{(k_1,\ldots,k_{|J|})\in (\NN*)^{|J|},\\ k_1+\ldots+k_{|J|}=k}}\binom{k}{k_1,\ldots,k_{|J|}} \prod_{u=1}^{|J|}\leb(b_{u})^{k_u};
\]
\item if $\pi$ is a partition of $[0, 1]$ with $k$ non-empty blocks $(b_1,\ldots,b_k)$ then 
\[\Pd_{\pi_{0}}(\Pi_{t}\preceq \pi)=(\frac{\epsilon}{\epsilon+1})^{t}\prod_{i=1}^{k}(1-\frac{\leb(b_i)}{1+\epsilon})^{-t}\un_{\{\pi_0 \preceq\pi\}}. \]
\end{itemize}
\end{proposition}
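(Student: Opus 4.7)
The plan is to follow the blueprint of Proposition~\ref{construction}. By construction $(\Pi_t)_{t\geq 0}$ is a continuous-time Markov chain on $\mathfrak{P}([0,1])$: its jumps occur at the epochs of the independent Poisson process $(Z_t)_{t\geq 0}$ of rate $\beta_\epsilon$, and at each epoch the new partition depends on the current one only through an independent merge rule. The Markov property is therefore immediate, and the problem reduces to two calculations: the transition rate from $\pi$ to $\pi^{\oplus J}$, and the probability that $\Pi_t$ is finer than a fixed partition $\pi$.

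For the transition rate, starting from $\pi=(b_i)_{i\in I}$, a single Poisson event produces the merge $\pi\to\pi^{\oplus J}$ if and only if the $R$ sampled points fall in $\bigcup_{j\in J}b_j$ with at least one point in each $b_j$, $j\in J$. Conditional on $R=k$, the multinomial theorem restricted to tuples with positive coordinates yields
\[
\sum_{\substack{(k_1,\ldots,k_{|J|})\in(\NN^*)^{|J|}\\ k_1+\cdots+k_{|J|}=k}}\binom{k}{k_1,\ldots,k_{|J|}}\prod_{u=1}^{|J|}\leb(b_{j_u})^{k_u}.
\]
Multiplying by $\nu(k)=\frac{1}{\beta_\epsilon k(\epsilon+1)^k}$, summing over $k\geq |J|$, and rescaling by $\beta_\epsilon$ produces the announced formula for $\tau_{\pi,\pi^{\oplus J}}$.

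For the semigroup identity, the event $\{\Pi_t\preceq\pi\}$ coincides with $\{\pi_0\preceq\pi\}$ intersected with the event that every Poisson event up to time $t$ is \emph{internal}, meaning that its $R$ uniform points all fall in a common block of $\pi$. Given $R=k$, an event is internal with probability $\sum_i\leb(b_i)^k$; summing against $\nu$ and using the Taylor expansion of $-\log(1-x)$ gives $q=\frac{1}{\beta_\epsilon}\sum_i\bigl(-\log(1-\tfrac{\leb(b_i)}{\epsilon+1})-\tfrac{\leb(b_i)}{\epsilon+1}\bigr)$. By Poisson thinning, the external events form a Poisson process of rate $\beta_\epsilon(1-q)$, so $\Pd_{\pi_0}(\Pi_t\preceq\pi)=\un_{\{\pi_0\preceq\pi\}}e^{-\beta_\epsilon(1-q)t}$. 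Using $\sum_i\leb(b_i)=1$ together with the definition of $\beta_\epsilon$, the isolated $\tfrac{1}{\epsilon+1}$ and $\tfrac{\leb(b_i)}{\epsilon+1}$ terms cancel, leaving $\beta_\epsilon(1-q)=\log\tfrac{\epsilon+1}{\epsilon}+\sum_i\log(1-\tfrac{\leb(b_i)}{\epsilon+1})$, and exponentiating yields the stated expression. The only bit of care needed is this last cancellation of the order-one terms; apart from that the proof is a direct instance of Poisson thinning and I do not anticipate any serious obstacle.
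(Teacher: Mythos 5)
Your proposal is correct. The transition-rate computation is essentially the paper's: condition on $R=k$, characterize the merge to exactly $\pi^{\oplus J}$ by ``at least one point in each $b_j$, $j\in J$, and none outside $\cup_{j\in J}b_j$'', regroup the resulting sum over placements into the multinomial form, and multiply by $\beta_\epsilon\nu(\{k\})$ so that the $\beta_\epsilon$ cancels (the paper likewise identifies the generator of $(\Pi_t)$ as $\beta_\epsilon(Q-I)$ with $Q$ the transition matrix of $(Y_n)$). Where you genuinely diverge is the semigroup identity: the paper conditions on $Z_t=n$ and on the sizes $r_1,\ldots,r_n$ of the packs, expands the product of the internal-event probabilities, rearranges with the multinomial theorem and resums the exponential series, whereas you observe that, the process being purely coarsening, $\{\Pi_t\preceq\pi\}$ (on $\{\pi_0\preceq\pi\}$) is exactly the event that no ``external'' epoch occurs before $t$, and then invoke Poisson thinning: since the marks are i.i.d.\ and independent of $(Z_t)$, external epochs form a Poisson process of rate $\beta_\epsilon(1-q)$ with $q=\frac{1}{\beta_\epsilon}\sum_i\bigl(-\log(1-\frac{\leb(b_i)}{\epsilon+1})-\frac{\leb(b_i)}{\epsilon+1}\bigr)$, giving $e^{-\beta_\epsilon(1-q)t}$ directly; the cancellation via $\sum_i\leb(b_i)=1$ then yields $\beta_\epsilon(1-q)=\log\frac{\epsilon+1}{\epsilon}+\sum_i\log(1-\frac{\leb(b_i)}{\epsilon+1})$, which matches the stated formula. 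Your route is shorter and conceptually cleaner, at the cost of invoking the thinning theorem; the paper's expansion is more self-contained but amounts to reproving that special case of thinning by hand. Both rest on the same key observation, which you do state, that finer-than-$\pi$ is preserved precisely when every simultaneous pack of points lands in a single block of $\pi$.
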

\begin{proof}
By construction $(Y_n)_n$ is a $\mathfrak{P}([0, 1])$-valued Markov chain such that for every $n\in \NN$, $Y_{n+1}=Y_n$ or $Y_{n+1}$ is a coarser partition of $[0, 1]$ than $Y_n$ obtained by merging several blocks of $Y_n$ in one block. \\ 
 Let $\pi=(b_i)_{i\in I}$ be a partition of $[0, 1]$ and let $J$ be a subset of $I$ with at least two elements. 
Set $b_{J}=:\underset{i\in J}{\bigcup}b_i$. Given the event  $\{Y_{n}=\pi\}$, $Y_{n+1}$ is equal to $\pi^{\oplus J}$ 
if among the $R$ points $U_1,\ldots, U_R$ uniformly distributed in the interval $[0, 1]$ at least one point falls in every block $b_i$ for $i\in J$ and none falls in $[0, 1]\setminus b_{J}$. Therefore, $P(Y_{n+1}=\pi^{\oplus J}|Y_{n}=\pi)$ is equal to 
\begin{multline*}\sum_{k\geq |J|}^{+\infty}\nu(\{k\})\sum_{(i_1,\ldots,i_k)\in W_k(J)}\prod_{u=1}^{k}\leb(b_{i_u})\\
=\frac{1}{\beta_{\epsilon}}\sum_{k\geq |J|}^{+\infty}\frac{1}{k(\epsilon+1)^k}\sum_{\underset{\sum_{i}k_i=k}          {(k_1,\ldots,k_{|J|})\in (\NN^*)^{|J|},}}\binom{k}{k_1,\ldots,k_{|J|}}\prod_{i\in J}\leb(b_{i})^{k_i}. 
\end{multline*}
Given the event $\{Y_{n}=\pi\}$, $Y_{n+1}$ is equal to $\pi$ if  $U_1,\ldots, U_R$  fall in one block of the partition $\pi$.
Therefore, \begin{align*}P(Y_{n+1}=\pi |Y_{n}=\pi)=&\sum_{k= 2}^{+\infty}\nu(\{k\})\Big(\sum_{i \in I}\leb(b_{i})^k\Big)\\=&-\frac{1}{\beta_{\epsilon}}\Big(\frac{1}{\epsilon+1}+\sum_{i \in I}\log(1-\frac{\leb(b_{i})}{\epsilon+1})\Big). \end{align*}
The generator of $(\Pi_t)_{t\geq 0}$ is then $A=\beta_{\epsilon}(Q-I)$ where $Q$ is the transition matrix of $(Y_n)_n$. \\
Let us now  compute $\Pd_{\pi_0}(\Pi_t\preceq \pi)$ where $\pi$ is  a partition of $[0, 1]$ with $k$ non-empty blocks $(b_1,\ldots,b_k)$ coarser than or equal to $\pi_0$.  $\{\Pi_t\preceq \pi\}$ means that all points that are chosen simultaneously in the interval $[0, 1]$   before time $t$ belong to the same block of $\pi$. Therefore, by decomposing   $\{\Pi_t\preceq \pi\}$ according to the value of $Z_t$ and the number of points falling at the same time, we obtain that $\Pd_{\pi_0}(\Pi_t\preceq \pi)$ is equal to
\[e^{-\beta_{\epsilon}t}\Big\{1+\sum_{n=1}^{+\infty}\frac{(\beta_{\epsilon}t)^n}{n!}\!\!\!\!\!\sum_{r_1\geq 2,\ldots,r_n\geq 2} \prod_{i=1}^{n}\Big(\nu(\{r_i\})(\leb(b_{1})^{r_i}+\ldots + \leb(b_{k})^{r_i})\Big)\Big\}.\]
We expand the product \[\prod_{i=1}^{n}(\leb(b_{1})^{r_i}+\ldots + \leb(b_{k})^{r_i})=\sum_{(u_1,\ldots,u_n)\in\enu{k}^n}\leb(b_{u_1})^{r_1}\ldots \leb(b_{u_n})^{r_n}\] and use that 
\[\sum_{r\geq 2}\nu(\{r\})\leb(b_{u})^{r}=\frac{1}{\beta_{\epsilon}}\Big(-\log(1-\frac{\leb(b_u)}{\epsilon+1})-\frac{\leb(b_u)}{\epsilon+1}\Big)\] 
to obtain that $\Pd_{\pi_0}(\Pi_t\preceq \pi)$ is equal to
\[e^{-\beta_{\epsilon}t} \Big\{1+\sum_{n=1}^{+\infty}\frac{t^n}{n!}\sum_{(u_1,\ldots,u_n)\in \enu{k}^n}\prod_{i=1}^{n}\Big(-\log(1-\frac{\leb(b_{u_i})}{\epsilon+1})-\frac{\leb(b_{u_i})}{\epsilon+1}\Big)\Big\}.\]
The second sum is also equal to: \[\sum_{\underset{n_1+\ldots+ n_k=n}{(n_1,\ldots,n_k)\in \NN^k}}\binom{n}{n_1,\ldots,n_k}\prod_{i=1}^{k}\Big(-\log(1-\frac{\leb(b_{i})}{\epsilon+1})-\frac{\leb(b_{i})}{\epsilon+1}\Big)^{n_i}.\]
Therefore, $\Pd_{\pi_0}(\Pi_t\preceq \pi)$ is equal to 
\[e^{-\beta_{\epsilon}t} \Big\{1+\sum_{n=1}^{+\infty}\sum_{\underset{n_1+ \ldots + n_k=n}{(n_1,\ldots,n_k)\in \NN^k}}\prod_{i=1}^{k}\frac{t^{n_i}}{n_i!}\Big(-\log(1-\frac{\leb(b_{i})}{\epsilon+1})-\frac{\leb(b_{i})}{\epsilon+1}\Big)^{n_i}\Big\}\]
The expression inside the braces is equal to 
\[\exp\Big(t\sum_{i=1}^{k}\big(-\log(1-\frac{\leb(b_{i})}{\epsilon+1})-\frac{\leb(b_{i})}{\epsilon+1}\big)\Big).\]
This yields 
\[\Pd_{\pi_0}(\Pi_t\preceq \pi)=\exp\Big(t\log(1-\frac{1}{\epsilon+1})-t\sum_{i=1}^{k}\log(1-\frac{\leb(b_{i})}{\epsilon+1})\Big).\]
\end{proof} 
\subsection{Asymptotics for cover time}
A vertex $x$ is said to be \emph{isolated at time $\alpha$} if no loop $\ell\in \La^{(n)}$ passes through $x$. 
We call  \textit{`cover time'} the smallest $\alpha$ such that $K_n$ has no isolated vertex at time $\alpha$ 
 and we denote it $T_n$.\\
Let us assume that the intensity of the killing measure is proportional to the size of the graph: $\kappa_n=n\epsilon$ with $\epsilon>0$.   
If we use the algorithm  described in Proposition \ref{construction} to define $\Ca^{(n)}$, 
then we need to choose uniformly at random 
an average of $\alpha\beta_{\epsilon}\Ed(R)=\frac{\alpha}{\epsilon(\epsilon+1)}$ points in $\enu{n}$. 
If the points are drawn one by one  and not  by packs of random sizes, then 
the solution of the classical coupon 
collector's problem  provides that the values of $T_n$ is around ${\epsilon(1+\epsilon)n\log(n)}$ for large $n$.  This analogy holds: 
\begin{proposition}
\label{nbisolated}
Let us assume that the intensity of the killing measure on $K_n$ is $\kappa_n=n\epsilon$ with $\epsilon>0$. 
 For every $a\in\RR$, the number of isolated vertices at time $\alpha_n=\epsilon(1+\epsilon)n(\log(n)+a+o(1))$ converges in law to the Poisson distribution with parameter $e^{-a}$. 
In particular,  $\frac{T_n}{n\epsilon(1+\epsilon)}-\log(n)$ converges in law to the Gumbel distribution\footnote{The cumulative distribution function of the Gumbel distribution is $x\mapsto e^{-e^{-x}}$.}. 
\end{proposition}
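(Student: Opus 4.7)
The plan is to prove the Poisson limit for the number $N_n$ of isolated vertices at time $\alpha_n$ by the method of factorial moments, and then deduce the Gumbel limit for $T_n$ via the identity $\{T_n > \alpha\} = \{N_n \geq 1\}$. Writing $N_n = \sum_{x=1}^{n} \un_{\{x \text{ isolated at time } \alpha_n\}}$, the $k$-th factorial moment is
\[
\Ed[N_n(N_n-1)\cdots(N_n-k+1)] = \sum_{\substack{x_1,\ldots,x_k \in \enu{n}\\ \text{pairwise distinct}}} \Pd(x_1,\ldots,x_k \text{ all isolated at time } \alpha_n),
\]
and by vertex-transitivity of $K_n$ all $n(n-1)\cdots(n-k+1)$ summands are equal. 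It will suffice to show they are equivalent to $e^{-ka}/n^k$ as $n\to+\infty$, so that the $k$-th factorial moment tends to $e^{-ka}$, which is the $k$-th factorial moment of the Poisson distribution with parameter $e^{-a}$.

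First I would compute the probability that a prescribed set $F$ with $|F|=k$ consists entirely of isolated vertices. Exactly as in the proof of Lemma \ref{semigrouppartition} (and the footnote treating the case $k=1$), the independence properties of the Poisson point process of loops yield
\[
\Pd(F \text{ is a set of isolated vertices at time } \alpha) = \exp\bigl(-\alpha[\mu(\DL(X))-\mu(\DL(F^c))]\bigr) = \Big(\tfrac{\det(G^{(F^c)})}{\det(G)\prod_{x\in F}\lambda_x}\Big)^{\alpha}.
\]
Specialising to $K_n$ with $\kappa_n=n\epsilon$, I would use the determinant identity ${\det(G^{(A)})=((n+\kappa_n)^{|A|-1}(n+\kappa_n-|A|))^{-1}}$ invoked in Example \ref{extransKn}, together with $\lambda^{(n)} = n+\kappa_n-1$, to obtain after simplification
\[
\Pd(F \text{ isolated}) = \Big(\frac{\kappa_n}{\kappa_n+k}\Big)^{\alpha_n}\Big(\frac{n+\kappa_n}{n+\kappa_n-1}\Big)^{k\alpha_n}.
\]

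Then I would perform the asymptotic analysis. Taking logarithms and expanding,
\[
\log\Pd(F \text{ isolated}) = \alpha_n\Bigl[-\log\bigl(1+\tfrac{k}{\kappa_n}\bigr) + k\log\bigl(1+\tfrac{1}{n+\kappa_n-1}\bigr)\Bigr] = -\alpha_n\cdot\frac{k(n-1)}{\kappa_n(n+\kappa_n-1)}+O\!\bigl(\tfrac{\alpha_n}{n^2}\bigr).
\]
With $\kappa_n=n\epsilon$, the leading coefficient is $\frac{k}{n\epsilon(1+\epsilon)}(1+o(1))$. Plugging in $\alpha_n=\epsilon(1+\epsilon)n(\log n+a+o(1))$ yields $\log\Pd(F \text{ isolated}) = -k\log n - ka + o(1)$, so $\Pd(F \text{ isolated}) \sim e^{-ka}n^{-k}$. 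Since $n(n-1)\cdots(n-k+1)\sim n^k$, the $k$-th factorial moment tends to $e^{-ka}$, which by the standard criterion (moments characterising a Poisson distribution) establishes convergence in law of $N_n$ to the Poisson distribution with parameter $e^{-a}$.

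For the cover time, I would conclude by noting that $\{T_n \leq \alpha_n\} = \{N_n = 0\}$, so the Poisson limit yields $\Pd(T_n \leq \alpha_n) \to e^{-e^{-a}}$. Writing $\alpha_n = \epsilon(1+\epsilon)n(\log n + a)$ (with $o(1)$ absorbed), this is exactly $\Pd\bigl(\tfrac{T_n}{n\epsilon(1+\epsilon)} - \log n \leq a\bigr)\to e^{-e^{-a}}$, the Gumbel cumulative distribution function. The main subtlety, and essentially the only technical point, is to keep track of the $1/n$ error terms in the exponent so that the product $\alpha_n \cdot (\text{error})$ still vanishes; this is straightforward because $\alpha_n = O(n\log n)$ and the correction to the leading $1/(n\epsilon(1+\epsilon))$ term is $O(1/n^2)$. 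No uniformity across $k$ is required, since Poisson convergence follows from pointwise convergence of all factorial moments.
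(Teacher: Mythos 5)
Your proposal is correct and follows essentially the same route as the paper: factorial moments of the count of isolated vertices, the identity $\Pd(F\ \text{isolated})=\big(\det(G^{(F^c)})/(\det(G)\prod_{x\in F}\lambda_x)\big)^{\alpha_n}$ evaluated via the explicit determinant on $K_n$, an expansion showing the $k$-th factorial moment tends to $e^{-ka}$, and the method of moments, with the Gumbel statement for $T_n$ read off from $\{T_n\le\alpha_n\}=\{N_n=0\}$. The computations match the paper's (your closed form equals its $n(n-1)\cdots(n-k+1)\,(1-\tfrac{1}{n(1+\epsilon)})^{-k\alpha_n}(1+\tfrac{k}{n\epsilon})^{-\alpha_n}$), so no gaps to report.
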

\begin{proof}
 For each vertex $x$, let $I_{n,x}$ denote the indicator of the event `\emph{$x$ is isolated at time $\alpha_n$}'. The number of isolated vertices at time $\alpha_n$ is $S_n=\sum_{x=1}^{n}I_{n,x}$. For every $k\in \NN^*$, let $\Ed(S_n)_k:=\Ed(S_n(S_n-1)\ldots (S_n-k+1))$ denote its $k$-th factorial moment; $\Ed(S_n)_k$ is the sum of  $\Pd(I_{n,x_1}=\ldots=I_{n,x_k}=1)$ over all (ordered) $k$-tuples of distinct vertices  $(x_1,\ldots,x_k)$. \\
As $\Pd(I_{n,x_1}=\ldots=I_{n,x_k}=1)$ is the probability that no loop in $\mathcal{DL}_{\alpha_n}$ intersects the subset $F=\{x_1,\ldots,x_k\}$, 
\[
\Pd(I_{n,x_1}=\ldots=I_{n,x_k}=1)=\exp\Big(-\alpha_n\mu(\ell,\,  \ell\; \text{intersects}\; F)\Big)\] 
and 
\begin{multline*}\mu(\ell,\,  \ell\; \text{intersects}\; F)=\mu(\DL(X))-\mu(\DL(F^c))\\=\log\Big(\det(G)\prod_{x\in\enu{n}}\lambda_{x}\Big)-\log\Big(\det(G^{(F^c)})\prod_{x\in F^c}\lambda_{x}\Big).
\end{multline*}
 Therefore, ${\displaystyle \Ed(S_n)_k=k!\sum_{F\in \Psub{\enu{n}}{k}}\Big(\frac{\det(G^{(F^c)})}{\det(G)\prod_{x\in F}\lambda_x}\Big)^{\alpha_n}}$. \\
In our setting, for every $x\in\enu{n}$, $\lambda_x=\lambda^{(n)}=n(\epsilon+1)-1$  and  for every $D\subset \enu{n}$, ${\det(G^{(D)})^{-1}=(\lambda^{(n)}+1)^{|D|-1}(\lambda^{(n)}+1-|D|)}.$ Thus 
\begin{align*}\Ed(S_n)_k&=n(n-1)\ldots(n-k+1)\Big(1-\frac{1}{n(1+\epsilon)}\Big)^{-k\alpha_n}\Big(1+\frac{k}{n\epsilon}\Big)^{-\alpha_n}.\\  &=\prod_{i=1}^{k}(1-\frac{i}{n})\exp\Big(k(\log(n)-\frac{\alpha_n}{n\epsilon(1+\epsilon)})\\&\qquad\qquad\qquad\qquad\quad+k\frac{\alpha_n}{2n^2}(\frac{1}{(1+\epsilon)^2}+\frac{k}{\epsilon^{2}})+\alpha_n O(\frac{1}{n^3\epsilon^{3}})\Big).
\end{align*}
As $\alpha_n=n\epsilon(1+\epsilon)(\log(n)+a+o(1))$, we deduce that $\Ed(S_n)_k$ converges to $\exp(-ka)$ for every $k\in \NN^*$. 
The convergence to the Poisson distribution with parameter $e^{-a}$ follows from the theory of moments. 
\end{proof}

\begin{remark}\
\begin{itemize}
\item[(i)]The distribution of the number of isolated vertices at a time $\alpha$ can also be obtained by the inclusion-exclusion principle; the probability that there exists $r$ isolated vertices at time $\alpha$ is  \[\sum_{j=0}^{n-r}(-1)^{j}\frac{n!}{r!j!(n-r-j)!}\Big(1-\frac{1}{n(1+\epsilon)}\Big)^{-\alpha(r+j)}\Big(1+\frac{r+j}{n\epsilon}\Big)^{-\alpha}. \]
\item[(ii)]In the statement of Proposition \ref{nbisolated}, we assume that  the intensity of the killing measure $\kappa_n$ is proportional to the number of vertices $n$: $\kappa_n=n\epsilon$. 
In fact the same proof  shows that Proposition \ref{nbisolated} also holds if we only assume  that $\frac{\kappa_n}{\log(n)}$ converges to $+\infty$  and replace $\epsilon$ with $\epsilon_n=\frac{\kappa_n}{n}$ in the statement.  
 Let us note that if $\frac{\kappa_n}{\log(n)}$ converges to a constant $c>0$ then $\Ed(S_n)_k$ converges to $m_k=\exp(-ka+\frac{k^2}{2c})$ for every $k\in\NN^*$ and  $\frac{t^km_k}{k!}$ tends to $+\infty$ for any $t>0$. 
\end{itemize}
\end{remark}
\subsection{Asymptotics for coalescence time}
Let $\tau_n$ denote  the coalescence time, \textit{i.e.} the first time at which  $\mathcal{C}_{\alpha}^{(n)}$ has only one block. The following theorem shows that the cover time and the coalescent time have the same asymptotic distribution: 
\begin{theorem}
\label{coaltime}
 Let us assume that  $\kappa_n=n\epsilon$ for every $n\in\NN^*$ with $\epsilon>0$.  For every $n\in\NN^*$, set $\alpha_n=\epsilon(1+\epsilon)n(\log(n)+a+o(1))$ where $a$ is a fixed real. \\
For every $k\in \NN$, the probability that $\DL^{(n)}_{\alpha_n}$ consists only of a component of size greater or equal to 2 and $k$ isolated points converges to $\displaystyle{\exp(-e^{-a})\frac{e^{-ak}}{k!}}$ as $n$ tends to $+\infty$.\\
In particular, $\dfrac{\tau_n}{n\epsilon(1+\epsilon)}-\log(n)$ converges in law to the Gumbel distribution. 
\end{theorem}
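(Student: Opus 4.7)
The plan is to combine Proposition~\ref{nbisolated} (which gives $\Pd(S_n=k)\to e^{-e^{-a}}e^{-ak}/k!$ for $S_n$ the number of isolated vertices) with a first-moment estimate ruling out blocks of intermediate size.  Write $\mathcal{G}_n$ for the event that $\mathcal{C}^{(n)}_{\alpha_n}$ has at most one non-singleton block.  The announced event coincides with $\{S_n=k\}\cap\mathcal{G}_n$, so it suffices to show $\Pd(\mathcal{G}_n^{c})\to 0$; the Gumbel limit for $\tau_n$ then follows from the case $k=0$, since $\{\tau_n\leq\alpha_n\}=\{\mathcal{C}^{(n)}_{\alpha_n}=\{X\}\}=\{S_n=0\}\cap\mathcal{G}_n$.

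If $\mathcal{G}_n^{c}$ holds, the smaller of two non-singleton blocks has size at most $n/2$, so writing $N_\ell$ for the number of blocks of size $\ell$, Markov's inequality gives
\[
\Pd(\mathcal{G}_n^{c})\;\leq\;\sum_{\ell=2}^{\lfloor n/2\rfloor}\binom{n}{\ell}\,\Pd(F_\ell\text{ is a block of }\mathcal{C}^{(n)}_{\alpha_n}),
\]
with $F_\ell$ any fixed subset of $\ell$ vertices.  Since loops of $\mathcal{DL}^{(n)}_{\alpha_n}$ contained in $F_\ell$ are independent of loops meeting both $F_\ell$ and $F_\ell^{c}$, the event $\{F_\ell\text{ is a block}\}$ factors as $\{\mathcal{C}^{(F_\ell)}_{\alpha_n}=\{F_\ell\}\}\cap\{\text{no loop crosses}\}$.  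By Lemma~\ref{semigrouppartition} applied to $\{F_\ell,F_\ell^{c}\}$ and the Green function computations on $K_n$,
\[
\Pd(\text{no loop crosses})=\Big[\bigl(1-\tfrac{\ell}{n(1+\epsilon)}\bigr)\bigl(1+\tfrac{\ell}{n\epsilon}\bigr)\Big]^{-\alpha_n}=\Big(1+\tfrac{\ell(n-\ell)}{n^{2}\epsilon(1+\epsilon)}\Big)^{-\alpha_n},
\]
which is asymptotic to $n^{-\ell}e^{-\ell a}$ for $\ell$ fixed as $n\to\infty$.

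For the internal-connectivity factor, the key observation is that $\{\mathcal{C}^{(F_\ell)}_{\alpha_n}=\{F_\ell\}\}$ coincides with the event that the simple graph $\tilde G$ on $F_\ell$---whose edges are the pairs $\{x,y\}$ covered by at least one loop of $\mathcal{DL}^{(F_\ell)}_{\alpha_n}$---is connected.  Using a union bound over the $\ell^{\ell-2}$ spanning trees of $K_\ell$ (Cayley), the pointwise bound $\mathbf{1}_{\{x,y\}\in\tilde G}\leq N^{*}_{\{x,y\}}$ with $N^{*}_{e}$ the number of loops visiting both endpoints of $e$, and the multivariate Campbell formula for Poisson point processes,
\[
\Pd(\mathcal{C}^{(F_\ell)}_{\alpha_n}=\{F_\ell\})\;\leq\;\sum_{T}\sum_{\pi}\prod_{B\in\pi}\alpha_n\,\mu^{(F_\ell)}(\ell\ni V(B)),
\]
where $\pi$ ranges over partitions of the edge set of $T$ and $V(B)$ is the set of endpoints of the edges in $B$.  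Since $\mu^{(F_\ell)}(\ell\ni V)$ is of order $(|V|-1)!/(\lambda^{(n)})^{|V|}$ and $\lambda^{(n)}/\alpha_n=O(1/\log n)$, the finest partition dominates, giving $\Pd(\mathcal{C}^{(F_\ell)}_{\alpha_n}=\{F_\ell\})\leq C\ell^{\ell-2}(c\log n/n)^{\ell-1}$ for suitable constants $C,c$.

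Combining the two factors and using $\ell^{\ell-2}/\ell!\leq e^{\ell}/\ell^{5/2}$ (Stirling) yields $\Ed[N_\ell]\leq(C'e^{-a})^{\ell}\,\ell^{-5/2}\,(c\log n/n)^{\ell-1}$, whose sum over $\ell\geq 2$ is dominated by the $\ell=2$ term and is $O(\log n/n)\to 0$.  For the remaining range $\ell\in[n^{1/2},n/2]$ the quantity $(1+\ell(n-\ell)/(n^{2}\epsilon(1+\epsilon)))^{-\alpha_n}$ is already super-polynomially small since its base is bounded away from $1$, which absorbs $\binom{n}{\ell}\leq 2^{n}$ and gives a contribution of $o(1)$.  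Hence $\Pd(\mathcal{G}_n^{c})\to 0$.  The principal technical difficulty is the spanning-tree estimate: the events $\{e\in\tilde G\}$ are positively correlated by FKG, so a direct product-of-marginals bound goes the wrong way; the Campbell expansion is essential to verify that contributions from coarser edge-partitions---i.e.\ loops visiting several endpoints of tree-edges at once---are suppressed by additional factors of $\lambda^{(n)}/\alpha_n$.
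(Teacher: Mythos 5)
Your skeleton is the same as the paper's: split the event as $\{S_n=k\}\cap\mathcal{G}_n$, invoke Proposition \ref{nbisolated} for the isolated-vertex count, and kill $\Pd(\mathcal{G}_n^c)$ by a union bound over candidate blocks $F$ of size $2\le\ell\le n/2$, factoring $\Pd(F\text{ is a block})$ into a no-crossing term (your formula for it agrees with the paper's) times an internal-connectivity term. The two places where you diverge from the paper are exactly the two places where your argument has gaps. First, the range $\ell\in[\sqrt n,n/2]$: your claim that the no-crossing probability ``is already super-polynomially small since its base is bounded away from $1$, which absorbs $\binom{n}{\ell}\le 2^n$'' is false at the bottom of that range. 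For $\ell=\sqrt n$ the base is $1+\Theta(n^{-1/2})$, so the no-crossing probability is only $\exp(-\Theta(\sqrt n\log n))$, which does not come close to absorbing $2^n=\exp(n\log 2)$. The statement you need (that $\binom{n}{\ell}$ times the no-crossing probability is summably small over this whole range) is true, but it requires the per-$\ell$ entropy-versus-energy comparison, i.e.\ comparing $\ell\log(en/\ell)$ with $\alpha_n\log\bigl(1+\tfrac{\ell(n-\ell)}{n^2\epsilon(1+\epsilon)}\bigr)$ uniformly in $\ell$; this is precisely the content of Lemma \ref{intersblock} in the paper, whose proof needs a non-trivial monotonicity analysis of the function $f_n$ (Lemma \ref{minfunct}) because the margin is tight near the lower endpoint.

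Second, the internal-connectivity factor. Your spanning-tree/Campbell route is genuinely different from the paper's: the paper bounds $\Pd(\mathcal{DL}^{(n,F)}_{\alpha_n}\text{ connected})$ by $\Pd\bigl(\sum_{x\in F}N^{(\alpha_n,F)}_x\ge|F|\bigr)$ and exploits the closed-form determinant expression of the generating function of $\sum_{x\in F}N^{(\alpha_n,F)}_x$ together with an optimized exponential Markov bound (Lemma \ref{lengthloop}), which gives $n^{-\bar\delta|F|/2}$ uniformly for $|F|\le n^{1-\delta}$ with no combinatorics over loop configurations. Your route, by contrast, hinges on the assertion that in $\sum_T\sum_\pi\prod_{B\in\pi}\alpha_n\mu^{(F_\ell)}(\ell\ni V(B))$ ``the finest partition dominates'' up to a constant, yielding $C\ell^{\ell-2}(c\log n/n)^{\ell-1}$ with constants uniform in $\ell$. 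As stated this is not justified and is not even true blockwise: a block $B$ of $m$ tree edges carries a factor of order $\alpha_n(|V(B)|-1)!\lambda^{-|V(B)|}$ with $|V(B)|\ge m+1$, and comparing it with the corresponding product of $m$ finest-partition factors leaves a ratio of order $(2m-1)!\,(\epsilon\log n)^{-(m-1)}$, which blows up once $m\gtrsim\sqrt{\log n}$; since you need $\ell$ up to $\sqrt n$, the factorial weights defeat the $(\log n)^{-1}$ gains and the domination-by-the-finest-partition argument collapses. The aggregate sum over trees and edge-partitions may still be small enough (the coarsest partition, for instance, contributes $O(\ell^{\ell-3}\log n/n^{\ell-1})$ per block size, which is fine), but establishing this requires carrying out the full combinatorial sum with uniform control in $\ell$ and $n$, which you have not done. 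So the decomposition and the use of Proposition \ref{nbisolated} are right, but both quantitative pillars — the large-block entropy bound and the small-block connectivity bound — are missing as written; the paper's Lemmas \ref{intersblock} and \ref{lengthloop} are exactly the substitutes you would need.
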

\begin{remark}\
\begin{itemize}
\item[(i)] Theorem \ref{coaltime} shows that $\epsilon(\epsilon+1)n\log(n)$ is a sharp threshold function for the connectedness of the random graph process $(\mathcal{G}^{(n)}_{\alpha})_{\alpha\geq 0}$ defined by the loop sets $(\DL^{(n)}_\alpha)_{\alpha}$. 
\item[(ii)] 
Such properties have been proven for the first time by Erd\"os and R\'enyi in \cite{ErdosRenyi59} for the random graph model they have introduced. To facilitate comparison, the following theorem states some of their results in a slightly different way from that used in \cite{ErdosRenyi59}: 
\begin{bibli}[Erd\"os and R\'enyi, \cite{ErdosRenyi59}]
 Let $G(n,N)$ denote a random graph obtained by forming $N$ links between $n$ labelled vertices, each of the  $\displaystyle{\binom{N}{\binom{n}{2}}}$ graphs being equally likely.\\ 
For every $c\in\RR$ and every $k\in\NN$, the probability that  ${G(n,\lfloor\frac{n}{2}(\log(n)+c)\rfloor)}$ contains a connected component of size $n-k$ and $k$ isolated points converges to $\displaystyle{\exp(-e^{-c})\frac{e^{-ck}}{k!}}$ as $n$ tends to $+\infty$.
\end{bibli}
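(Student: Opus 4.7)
The plan is to follow the classical two-step strategy: first, establish that the number $X_n$ of isolated vertices of $G(n,N_n)$, with $N_n=\lfloor\tfrac{n}{2}(\log n+c)\rfloor$, converges in law to $\mathrm{Poisson}(e^{-c})$ by the method of factorial moments; second, show that with probability tending to one there is no connected component of size between $2$ and $n/2$, so that the non-isolated vertices form a single giant component. Combining these two facts, the event that $G(n,N_n)$ has a component of size $n-k$ together with $k$ isolated vertices coincides up to a null event with $\{X_n=k\}$, and its probability converges to $e^{-e^{-c}}e^{-ck}/k!$.

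For the first step, I would fix $k\geq 1$ and compute, for any ordered $k$-tuple of distinct vertices $v_1,\ldots,v_k$,
\[
\Pd(v_1,\ldots,v_k\text{ all isolated})=\binom{\binom{n-k}{2}}{N_n}\bigg/\binom{\binom{n}{2}}{N_n},
\]
since the $N_n$ edges must all lie among the $\binom{n-k}{2}$ edges spanned by the remaining $n-k$ vertices. Writing the ratio as $\prod_{i=0}^{N_n-1}\bigl(1-\tfrac{k(2n-k-1)/2}{\binom{n}{2}-i}\bigr)$ and Taylor-expanding the logarithm, one finds $\log\Pd=-k(\log n+c)+o(1)$. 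Summing over the $(n)_k$ ordered tuples then yields
\[
\Ed\bigl[X_n(X_n-1)\cdots(X_n-k+1)\bigr]=(n)_k\,\Pd(v_1,\ldots,v_k\text{ isolated})\longrightarrow e^{-ck},
\]
which are precisely the factorial moments of $\mathrm{Poisson}(e^{-c})$. Since the Poisson law is determined by its moments, $X_n\Rightarrow\mathrm{Poisson}(e^{-c})$ follows.

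For the second step, I would use a first-moment estimate on the number $Y_{n,s}$ of connected components of exact size $s$. By Cayley's formula, a component of size $s$ contains one of $s^{s-2}$ labelled spanning trees and has no edge to the remaining $n-s$ vertices, whence
\[
\Ed[Y_{n,s}]\leq\binom{n}{s}\,s^{s-2}\,\binom{\binom{s}{2}+\binom{n-s}{2}-(s-1)}{N_n-(s-1)}\bigg/\binom{\binom{n}{2}}{N_n}.
\]
Approximating the hypergeometric ratio by its Bernoulli counterpart with $p_n=N_n/\binom{n}{2}\sim(\log n)/n$, the right-hand side is dominated by $\binom{n}{s}s^{s-2}p_n^{s-1}(1-p_n)^{s(n-s)}(1+o(1))$. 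Splitting $\sum_{s=2}^{n/2}\Ed[Y_{n,s}]$ into three ranges (fixed $s$, moderate $s$ up to a mild power of $\log n$, and the macroscopic range up to $n/2$) and using Stirling's formula together with $(1-p_n)^{s(n-s)}\leq e^{-sp_n(n-s)}$, one shows that each range contributes $o(1)$.

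The main obstacle is the uniform control in step (ii), and in particular the regime $s\asymp\sqrt{n}$ where $\binom{n}{s}s^{s-2}$ is near its maximum; one must verify that the exponential suppression $e^{-sp_n(n-s)}\sim e^{-s(\log n)(1-s/n)}$ dominates the combinatorial growth $s\log(ne/s)+(s-2)\log s$ uniformly in $s\in[2,n/2]$. Once this uniform estimate is in place, steps (i) and (ii) combine as above: conditionally on the high-probability event that only one non-trivial component exists, its size must equal $n-X_n$, and the Poisson limit for $X_n$ yields the announced $e^{-e^{-c}}e^{-ck}/k!$.
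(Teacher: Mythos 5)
The statement you are proving is quoted in the paper as a known theorem of Erd\"os and R\'enyi, cited from \cite{ErdosRenyi59} purely for comparison with Theorem \ref{coaltime}; the paper gives no proof of it, so there is no internal argument to measure yours against. That said, your proposal is the correct classical route, and it is worth noting that it has exactly the same architecture as the paper's proof of its own analogue, Theorem \ref{coaltime}: a Poisson limit for the number of isolated vertices via factorial moments (your step one is the $G(n,N)$ counterpart of Proposition \ref{nbisolated}), followed by a first-moment union bound showing that with probability $1-o(1)$ there is no component of size between $2$ and $n/2$ (your step two is the counterpart of the bound on $\Pd(B_n)$ via Lemmas \ref{intersblock} and \ref{lengthloop}). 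Your computations check out: the factorial moment $(n)_k\binom{\binom{n-k}{2}}{N_n}/\binom{\binom{n}{2}}{N_n}\to e^{-ck}$ follows from $\binom{n}{2}-\binom{n-k}{2}=k(2n-k-1)/2$ and the fact that $N_n=o(n^2)$ makes the second-order terms in the logarithm $O(\log^2 n/n)$; and in step two the crude uniform bound $\binom{n}{s}s^{s-2}p_n^{s-1}e^{-sp_n(n-s)}\leq \frac{n}{\log n}\bigl(e\log n/\sqrt{n}\bigr)^{s}$ for $s\leq n/2$ already kills the regime $s\geq 5$ that worried you (including $s\asymp\sqrt{n}$), leaving only the finitely many small values of $s$, for which the expectation is $O(n^{1-s}\log^{s-1}n)$. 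The one point to be careful about is the transfer from the hypergeometric ratio to the Bernoulli expression with a uniform $(1+o(1))$: it is cleaner to bound the ratio $\binom{M-m}{N-t}/\binom{M}{N}$ directly by $(N/(M-m))^{t}\exp(-(m-t)(N-m)/M)$ than to invoke an asymptotic equivalence uniformly in $s$. The interest of the comparison is that in the independent-edge model your step two is a few lines, whereas in the loop model the same step costs the paper two substantial lemmas precisely because the loops correlate distant edges.
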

\item[(iii)]
Let us note that the ratio  $\dfrac{\mu(\ell,\ \ell\; \text{passes through exactly two vertices})}{\mu(\DL(\enu{n}))}$ 
 converges to $-\frac{1}{2}\Big(\epsilon+1+(\epsilon+1)^2\log(1-\frac{1}{\epsilon+1})\Big)^{-1}$   as  $n$ tends to $+\infty$. The limit is an increasing function of $\epsilon$ that converges to $1$ as $\epsilon$ tends to $+\infty$.   Therefore,  it is not surprising to obtain properties similar to  the Erd\"os-R\'enyi model for large $\epsilon$. \\
In fact, we can  show using the same proof that Theorem  \ref{coaltime} holds if we replace $\epsilon$ with a positive sequence $(\epsilon_n)_n$ such that  $\displaystyle{\liminf_{n\rightarrow +\infty}\log(n)\epsilon_n>0}$. \\
By contrast, if we let $\epsilon$ converge to $0$ as $n$ tends to $+\infty$, the loop sets can have very large loops: a result of Y.  Chang in  \cite{Chang12} implies that if $\epsilon\underset{n\rightarrow +\infty}{\sim} n^{-d}$ with $d>1$ then  
\[\Pd\big(\exists \ell \in \DL^{(n)}_{1/\log(n)}\;\text{covering}\;\enu{n}\big)\underset{n\rightarrow +\infty}{\rightarrow} 1-e^{-(d-1)}.\]
\end{itemize}
\end{remark}
\noindent The rest of the section is devoted to the proof of  Theorem \ref{coaltime}.\\
Let $A_{n,k}$ denote the event `\textit{$\DL^{(n)}_{\alpha_n}$ consists only of a component of size greater 
or equal to 2 and $k$ isolated points}', let $V_{n}$ be the number of isolated vertices in $\DL^{(n)}_{\alpha_n}$ 
and let $B_n$ be the event `\textit{$\DL^{(n)}_{\alpha_n}$ has at least two components of size greater or equal to $2$}'. 
For $n\geq k+2$, 
 \[\Pd(A_{n,k})=\Pd(V_n=k)-\Pd(\{V_n=k\}\cap B_n). \]
By Proposition \ref{nbisolated}, $\Pd(V_n=k)$ converges to $e^{-e^{-a}}\frac{e^{-ka}}{k!}$. 
We shall prove that $\Pd(B_n)$ converges to $0$. 
For a subset $F$, let $q_{F,n}$ denote the probability that $F$ is a block at time $\alpha_n$.  
\[ \Pd(B_n)\leq \sum_{r=2}^{\lfloor n/2\rfloor}\sum_{F\in \Psub{\enu{n}}{r}}q_{F,n}.\]
As $\DL^{(n,F)}_{\alpha_n}$ is independent of $\DL^{(n)}_{\alpha}\setminus\DL^{(n,F)}_{\alpha_n}$, we have 
\[
q_{F,n} =\Pd(\DL^{(n,F)}_{\alpha_n}\;\text{is connected})\Pd(\text{no  loop in}\;\DL^{(n)}_{\alpha_n}\; \text{intersects}\;F\:\text{and}\;F^c).
\]
For a sufficiently large set $F$, we shall simply  bound  $q_{F,n}$ from above by 
\[
\Pd(\text{no  loop in}\;\DL^{(n)}_{\alpha_n}\;\text{intersects}\;F\;\text{and}\;F^c).
\]  
For small $F$,  the probability $\Pd(\DL^{(n,F)}_{\alpha_n} \;\text{is connected})$ is small. Instead of computing it, we shall consider its upper bound  by the probability that the total length of non-trivial loops on $\mathcal{\DL}_{\alpha_n}^{(n,F)}$ is greater or equal to $|F|$, that is  $\Pd(\sum_{x\in F}N^{(\alpha_n,F)}_x\geq |F|)$ where  $N^{(\alpha,F)}_x$ denotes the number of crossings of a vertex $x$ by the set of non-trivial loops included in $F$ at time $\alpha$. Therefore for a small set $F$, we shall use that  \[q_{F,n} \leq \Pd(\sum_{x\in F}N^{(\alpha_n,F)}_x\geq |F|)\Pd(\text{no  loop in}\;\DL_{\alpha_n}\;\text{intersects}\;F\;\text{and}\;F^c).\]
We start by stating two lemmas: Lemma \ref{intersblock} provides an upper bound for the probability that 
no  loop in $\DL^{(n)}_{\alpha_n}$  intersects $F$  and $F^c$. Lemma \ref{lengthloop} gives  an  
exponential inequality  for $\Pd(\sum_{x\in F}N^{(\alpha_n,F)}_x\geq |F|)$ based on Markov's inequality. To shorten the notations let $\Psub{n}{}$ denote the power set of $\enu{n}$ and let $\Psub{n}{r}$ consist  of subsets of $\enu{n}$ of cardinality~$r$. 
\begin{lem}
\label{intersblock}
 For every nonempty proper subset $F$ of $\enu{n}$, let $A_{n,F}$ denote the event `no  loop in $\DL^{(n)}_{\alpha_n}$ passes through both $F$ and $F^c$' 
where ${\alpha_n=\epsilon(\epsilon+1)n(\log(n)+a+o(1))}$.\\ 
 For every  $\delta\in]0,1[$ there exists  $n_{\delta}>0$ such that for every  
 $n>n_{\delta}$ and for every  $n^{1-\delta}\leq r\leq \frac{n}{2}$, 
\[\sum_{F\in \Psub{n}{r}}\Pd(A_{n,F})\leq \frac{1}{\sqrt{r}}\exp\Big(-\frac{1-\delta}{2}n^{1-\delta} \log(n)\Big). \]
\end{lem}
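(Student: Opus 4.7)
The event $A_{n,F}$ coincides with $\{\Ca^{(n)}\preceq\{F,F^c\}\}$, so the starting point is Lemma~\ref{semigrouppartition} combined with the determinant evaluation used in Example~\ref{extransKn}, namely $\det(G^{(D)})^{-1}=(n(1+\epsilon))^{|D|-1}(n(1+\epsilon)-|D|)$. A short algebraic simplification (the powers of $n(1+\epsilon)$ telescope out) gives, with $r=|F|$,
\[
\Pd(A_{n,F})=\left[1+\frac{r(n-r)}{n^{2}\epsilon(1+\epsilon)}\right]^{-\alpha_n}.
\]
The key observation is that this depends only on $r$, so the sum over $F\in\Psub{n}{r}$ reduces to $\binom{n}{r}$ times the above quantity.

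I would then bound the two factors uniformly in $r\in[n^{1-\delta},n/2]$. For the binomial I would use the Stirling estimate $\binom{n}{r}\leq C\,r^{-1/2}(1-r/n)^{-1/2}\exp(r\log(n/r)+(n-r)\log(n/(n-r)))$; since $r\leq n/2$, the factor $(1-r/n)^{-1/2}$ is absorbed into a constant and produces exactly the $r^{-1/2}$ prefactor demanded by the statement. For the probability, I would use that $u:=r(n-r)/(n^{2}\epsilon(1+\epsilon))$ is bounded by $1/(4\epsilon(1+\epsilon))$ uniformly on $r\leq n/2$, so the elementary inequality $\log(1+u)\geq u(1-u/2)$ gives, after inserting $\alpha_n=\epsilon(1+\epsilon)n(\log n+a+o(1))$,
\[
-\alpha_n\log(1+u)\leq -(1+o(1))\,r(1-r/n)\log n,
\]
with the $o(1)$ uniform in $r\in[n^{1-\delta},n/2]$.

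The logarithm of the summed probability is then, up to an additive $O(1)-\tfrac{1}{2}\log r$,
\[
\Phi_n(r):=r\log(n/r)+(n-r)\log(n/(n-r))-(1+o(1))\,r(1-r/n)\log n.
\]
A direct differentiation shows $\Phi_n'(r)\simeq \log((n-r)/r)-(1-2r/n)\log n<0$ on $[n^{1-\delta},n/2]$ once $n$ is large, because the $\log n$ factor makes the second term dominate throughout this polynomial-in-$n$ range. Hence $\Phi_n$ is decreasing and its maximum is attained at the left endpoint $r_{*}=n^{1-\delta}$. There one computes $r\log(n/r)=\delta\, n^{1-\delta}\log n$, $(n-r)\log(n/(n-r))=O(n^{1-\delta})$ and $r(1-r/n)\log n=(1+o(1))n^{1-\delta}\log n$, so
\[
\Phi_n(r_{*})=-(1-\delta)\,n^{1-\delta}\log n\,(1+o(1)),
\]
which is at most $-\tfrac{1-\delta}{2}n^{1-\delta}\log n$ for $n$ sufficiently large, yielding the claim.

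The main obstacle I expect is the uniformity in $r$: the stated bound has only a factor-of-two slack in the exponent---the natural decay rate is $(1-\delta)$, the claim demands $(1-\delta)/2$---so all the $o(1)$ corrections coming from Stirling, from the expansion of $\log(1+u)$, and from the asymptotic form of $\alpha_n$ must be controlled uniformly over the entire range $[n^{1-\delta},n/2]$. The monotonicity argument for $\Phi_n$ is what makes this tractable, since it reduces the worst case to the single endpoint $r_{*}$ and bypasses any need to analyse intermediate values of $r$ one by one.
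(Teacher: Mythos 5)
Your overall route is the same as the paper's: the exact evaluation $\Pd(A_{n,F})=\bigl[1+\tfrac{r(n-r)}{n^{2}\epsilon(1+\epsilon)}\bigr]^{-\alpha_n}$, the reduction of the sum to a factor $\binom{n}{r}$, the Stirling bound producing the $r^{-1/2}$ prefactor, and the plan to show that the worst case over $r\in[n^{1-\delta},n/2]$ sits at $r=n^{1-\delta}$, where the exponent is $\sim-(1-\delta)n^{1-\delta}\log n$. The gap is in the step where you replace $\log(1+u)$ by $(1+o(1))u$ \emph{uniformly} over the whole range. On $[n^{1-\delta},n/2]$ the quantity $u=r(n-r)/(n^{2}\epsilon(1+\epsilon))$ is not small: at $r=n/2$ it equals $\tfrac{1}{4\epsilon(1+\epsilon)}$, a constant, so $\log(1+u)$ differs from $u$ by a constant factor bounded away from $1$, not a $1+o(1)$ factor; moreover your inequality $\log(1+u)\geq u(1-u/2)$ is vacuous as soon as $u\geq 2$, i.e.\ whenever $\epsilon(1+\epsilon)\leq 1/8$. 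Consequently the claimed bound $-\alpha_n\log(1+u)\leq-(1+o(1))\,r(1-r/n)\log n$ is false for $r$ of order $n$, and the function $\Phi_n$ whose monotonicity you establish is not an upper bound for the true exponent on the whole interval (its monotonicity is a property of the linearised function only). Differentiating through an unquantified $o(1)$ term is a further, smaller, lapse. Note that you cannot fix this by using a single linear lower bound $\log(1+u)\geq c_\epsilon u$ valid up to $u=\tfrac{1}{4\epsilon(1+\epsilon)}$: that bound is lossy precisely at the left endpoint, where it would only give an exponent $(\delta-c_\epsilon)n^{1-\delta}\log n$, useless when $\delta>c_\epsilon$.

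The paper keeps the exact exponent $-nf_n(r/n)$ with $f_n(x)=x\log x+(1-x)\log(1-x)+u_n\epsilon(1+\epsilon)\log\bigl(1+\tfrac{x(1-x)}{\epsilon(1+\epsilon)}\bigr)$ and, instead of monotonicity (which it does not claim and which its argument is designed to do without), shows via the sign analysis of $f_n''$ and Lemma \ref{minfunct} that the minimum of $f_n$ over $[n^{-\delta},1/2]$ is attained at one of the two endpoints; since $f_n(1/2)\to+\infty$, the relevant endpoint is $n^{-\delta}$, where $f_n(n^{-\delta})\sim(1-\delta)n^{-\delta}\log n$, and the factor $\tfrac12$ in the statement absorbs all errors. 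Your strategy can be repaired without monotonicity by splitting the range in $r$: for $r\geq\eta n$ with $\eta>0$ fixed, use $\log(1+u)\geq\log\bigl(1+\tfrac{\eta(1-\eta)}{\epsilon(1+\epsilon)}\bigr)$, a positive constant, so the exponent is at most $n\log 2-c_{\eta,\epsilon}\,n\log n$, far smaller than required; for $n^{1-\delta}\leq r\leq\eta n$ with $\eta$ chosen small in terms of $\delta$ and $\epsilon$, the linearisation $\log(1+u)\geq(1-C_\epsilon\eta)u$ is legitimate, the entropy term is at most $\delta r\log n+r$, and the endpoint computation then yields the bound $-\tfrac{1-\delta}{2}r\log n$ uniformly. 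As written, however, the uniform $(1+o(1))$ claim on which your monotonicity argument rests is incorrect, so the proof has a genuine gap.
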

\begin{proof}
Let $r\in\enu{n/2}$ and let $F$ be a subset of $\enu{n}$ with $r$ elements.
$$
\Pd(A_{n,F})=\exp\Big(-\alpha_n\mu(\{\ell,\ \ell \text{ intersects } F \text{ and } F^c\})\Big)=\Big(\frac{\det(G^{(F)})\det(G^{(F^c)})}{\det(G)}\Big)^{\alpha_n}.
$$ 
In our setting,  $\det(G^{(D)})=(n(1+\epsilon))^{-|D|+1}(n(1+\epsilon)-|D|)^{-1}$ for every subset $D$. Therefore,   $\Pd(A_{n,F})=(1+\frac{1}{\epsilon(\epsilon+1)}\frac{|F|}{n}(1-\frac{|F|}{n}))^{-\alpha_n}$. 
Using that $\binom{n}{r}\leq \frac{1}{\sqrt{2\pi r}\sqrt{1-\frac{r}{n}}}(\frac{n}{r})^r(1-\frac{r}{n})^{-(n-r)}$ (see for example \cite{BollobasBook}, formula 1.5 page 4),  we obtain:
\[\sum_{F\in \Psub{n}{r}}\Pd(A_{n,F}) \leq  \frac{1}{\sqrt{r}}\exp\big(-nf_n(\frac{r}{n})\big)\] 
where 
$f_n$ is the function on $]0, 1/2]$ defined by: \[f_n(x)=x\log(x)+(1-x)\log(1-x)+u_n\epsilon(1+\epsilon)\log\Big(1+ \frac{x(1-x)}{\epsilon(\epsilon+1)}\Big)\] for $x\in ]0, 1/2]$,
with $u_n=\log(n)+a+o(1)$. \\
The function $f_n$ is of class $C^2$ in the interval $]0,1/2]$ and the first two derivatives of $f_n$ are: 
\begin{itemize}
 \item $f'_{n}(x)=\log(x)-\log(1-x)+u_n\frac{1-2x}{1+\frac{x(1-x)}{\epsilon(\epsilon+1)}}$,
\item $f''_n(x) = \frac{Q_n(x(1-x))}{x(1-x)(1+\frac{x(1-x)}{\epsilon(\epsilon+1)})^2}$ where $Q_n$ is the polynomial function of second order defined, for every $y$, by: \[Q_n(y)=1+y\Big(\frac{2}{\epsilon(1+\epsilon)}-(2+\frac{1}{\epsilon(1+\epsilon)})u_n\Big)+y^2\frac{1}{\epsilon(1+\epsilon)}\Big(\frac{1}{\epsilon(1+\epsilon)}+2u_n\Big).\]
\end{itemize}
Thus  $f_n$ has the following properties: 
\begin{itemize}
 \item[(i)] $\lim_{x\rightarrow 0^{+}}f_n(x)=0$,  $f_n(1/2)>0$, 
\item[(ii)] $\lim_{x\rightarrow 0^{+}}f'(x)<0$,  $f'(1/2) = 0$, 
\item[(iii)] $f_n''=0$ has at most two solutions in $]0, 1/2]$.
\end{itemize}

A $C^2$ function in the interval $]0,1/2]$ that verifies conditions $(i)$, $(ii)$,  $(iii)$ has the following property:  for every $c\in]0, 1/2]$ such that $f_n(c)$ is positive, the minimum of $f_n$ on the interval $[c, 1/2]$ lies on $c$ or $\frac{1}{2}$ (see Lemma \ref{minfunct} in the appendix). In fact, $(f_n(1/2))_n$ converges to $+\infty$, whence for sufficiently large $n$, $\inf_{x\in[c, 1/2]}f_n(x)=f_n(c)$. 

To conclude, let us consider  $f_n$ at point $\frac{1}{n^\delta}$ for some $\delta\in]0, 1[$. 
\[f_{n}(\frac{1}{n^\delta})=-\frac{\delta\log(n)}{n^\delta}+(1-\frac{1}{n^\delta})\log(1-\frac{1}{n^\delta})
+u_n\epsilon(1+\epsilon)\log\Big(1+\frac{n^{-\delta}(1-n^{-\delta})}{\epsilon(1+\epsilon)}\Big).\] 
 Therefore $f_{n}(\frac{1}{n^\delta})$ is equivalent to $\frac{1-\delta}{n^\delta}\log(n)$. 
We deduce that for sufficiently large values of $n$ and  for every $x\in[\frac{1}{n^\delta},  1/2]$, $f_n(x)\geq f_n(\frac{1}{n^\delta})\geq \frac{1-\delta}{2n^{\delta}}\log(n)$ which ends the proof. 
\end{proof}
\begin{lem}
\label{lengthloop}
 Let $\delta$ and $\bar{\delta}$ be two positive reals such that  $0<\bar{\delta}<\delta<1$. 
Assume that $\kappa_n=n\epsilon$ with $\epsilon>0$ and set $\alpha_n=\epsilon(1+\epsilon)n(\log(n)+a+o(1))$ with $a\in \RR$ for every $n\in\NN$. \\
 There exists $n_{\delta,\bar{\delta}}>0$ such that for every $n\geq n_{\delta,\bar{\delta}}$,  and $F\in\Psub{n}{}$ with $2\leq |F|\leq n^{1-\delta}$, 
\[\Pd\Big(\sum_{x\in F}N^{(\alpha_n,F)}_x\geq |F|\Big)\leq n^{-\frac{\bar{\delta}}{2}|F|}.\]
\end{lem}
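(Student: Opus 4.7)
The plan is to apply Markov's inequality to the Laplace transform of $L := \sum_{x \in F} N^{(\alpha_n, F)}_x$, which equals the total length of the non-trivial loops of $\DL^{(n,F)}_{\alpha_n}$. Writing $r := |F|$, and noting that these loops form a Poisson point process on $\DL(F)$ with intensity $\alpha_n \mu_{|\DL(F)}$, Campbell's formula gives, for $s>0$ with $s\,\rho(P_{|F})<1$,
\begin{equation*}
\Ed(s^L) \;=\; \exp\Big(\alpha_n \sum_{k\ge 2}(s^k - 1)\,\tfrac{1}{k}\mathrm{tr}(P_{|F}^k)\Big) \;=\; \Big(\frac{\det(I - P_{|F})}{\det(I - s P_{|F})}\Big)^{\alpha_n},
\end{equation*}
where the second equality uses $-\log\det(I - xP_{|F}) = \sum_{k\ge 1}x^k\mathrm{tr}(P_{|F}^k)/k$ together with $\mathrm{tr}(P_{|F}) = 0$.

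In our setting, writing $\lambda_n := n(1+\epsilon)-1$, we have $P_{|F} = \lambda_n^{-1}(J - I)$ with $J$ the $r\times r$ all-ones matrix, so $P_{|F}$ has eigenvalues $(r-1)/\lambda_n$ (simple) and $-1/\lambda_n$ (with multiplicity $r-1$), and
\begin{equation*}
\det(I - sP_{|F}) \;=\; \Big(1 - \tfrac{s(r-1)}{\lambda_n}\Big)\Big(1 + \tfrac{s}{\lambda_n}\Big)^{r-1}.
\end{equation*}
Setting $A(s) := \log\det(I - P_{|F}) - \log\det(I - sP_{|F})$, a direct differentiation yields
\begin{equation*}
A'(s) \;=\; \frac{sr(r-1)}{\lambda_n^{2}\bigl(1 + s/\lambda_n\bigr)\bigl(1 - s(r-1)/\lambda_n\bigr)},
\end{equation*}
the crucial feature being that the $O(\lambda_n^{-1})$ contributions from the two factors of $\det(I - sP_{|F})$ cancel. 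Integrating from $1$ to $s$ and bounding $(1 + t/\lambda_n)^{-1} \le 1$ gives, whenever $s(r-1) < \lambda_n$, the clean estimate
\begin{equation*}
A(s) \;\le\; \frac{r(r-1)(s^2-1)}{2\lambda_n^{2}\bigl(1 - s(r-1)/\lambda_n\bigr)}.
\end{equation*}

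To finish, I fix $\beta$ in the non-empty interval $(\bar\delta/2,\ \delta/2)$ (for instance $\beta=(\bar\delta+\delta)/4$) and take $s = n^\beta$. Since $r\le n^{1-\delta}$, we have $s(r-1)/\lambda_n = O(n^{\beta-\delta})\to 0$ uniformly in such $F$; in particular, for $n$ large enough $s(r-1)/\lambda_n\le 1/2$ and thus $A(s) \le 2 r^{2}s^{2}/\lambda_n^{2}$. Using $\alpha_n/\lambda_n^{2} = O(\log n/n)$, Markov's inequality $\Pd(L\ge r)\le s^{-r}\Ed(s^L)$ yields
\begin{equation*}
\log\Pd(L\ge r) \;\le\; -\beta\, r\log n + C_\epsilon\, n^{2\beta-1}\, r^{2}\log n
\end{equation*}
for a constant $C_\epsilon$ depending only on $\epsilon$. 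Divided by $r\log n$, the second term is at most $C_\epsilon\, n^{2\beta-\delta}$, which tends to $0$ uniformly in $r\in[2,n^{1-\delta}]$ thanks to $\beta<\delta/2$. Hence for $n\ge n_{\delta,\bar\delta}$ the right-hand side is dominated by $-(\bar\delta/2)r\log n$, giving the stated inequality. The main technical subtlety is securing this uniformity simultaneously in $r$ and in $F$; it relies precisely on the cancellation of the $O(\lambda_n^{-1})$ terms in $A(s)$, without which the admissible range of $\beta$ would shrink and the Markov bound would no longer beat $n^{-\bar\delta r/2}$ in the regime $r\asymp n^{1-\delta}$.
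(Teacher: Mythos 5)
Your proof is correct and follows essentially the same route as the paper's: an exponential Markov (Chernoff) bound on the total length $\sum_{x\in F}N_x^{(\alpha_n,F)}$, whose Laplace transform is computed exactly from the complete-graph structure (your eigenvalue computation for $P_{|F}$ is equivalent to the paper's evaluation of $\det\big(sI+\lambda^{(n)}(1-s)G^{(F)}\big)$). The only differences are in execution: you derive the transform via Campbell's formula and the trace--log identity instead of quoting the generating-function formula of \cite{stfl}, and you use the suboptimal but sufficient tilt $s=n^{\beta}$ with $\beta\in(\bar{\delta}/2,\delta/2)$ in place of the paper's near-optimal choice $e^{\theta_{r,n}}=(\lambda^{(n)}+1)/\sqrt{\alpha_n(r-1)}$, which lets your final estimate go through with less bookkeeping.
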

\begin{proof}
To prove Lemma \ref{lengthloop}, we shall apply  Markov's inequality to the random variable\linebreak ${\exp(\theta\sum_{x\in F}N^{(\alpha_n,F)}_x)}$ for a well-chosen positive real $\theta$. 

The generating function of the vector $(N^{(\alpha,F)}_x,x\in F)$ has been computed for any finite graph $\mathcal{G}=(\mathcal{V},\mathcal{E})$ in \cite{stfl}\footnote{Formula (4.3) page 37 in \cite{stfl} contains a misprint: the exponent $1$ in the left-hand side term of the equation has to be replaced by $\alpha$.}, page 37: $\forall\ (s_x)_{x\in \mathcal{V}}\in]0, 1]^{\mathcal{V}},$
\[\Ed\Big(\prod_{x\in F}s_x^{N^{(\alpha,F)}_{x}}\Big)=\left(\det\Big(\Big[s_x\delta_{x,y}+\lambda_x(1-s_x)G^{(F)}_{x,y}\Big]_{x,y\in F}\Big)\right)^{-\alpha}.\]
In our setting,  the generating function of $\sum_{x\in F}N^{(\alpha,F)}_x$ satisfies: \[\Ed\Big(s^{ \sum_{x\in F}N^{(\alpha,F)}_x}\Big)=\Big(\det(sI_{|F|}+\lambda^{(n)}(1-s)G^{(F)})\Big)^{-\alpha}\; \forall\, 0<s\leq 1 \]
where $\lambda^{(n)}=n(\epsilon+1)-1$ and $G^{(F)}_{x,y}=\frac{1}{\lambda^{(n)}+1}\Big(\un_{\{x=y\}}+\frac{1}{\lambda^{(n)}+1-|F|}\Big)$ for ${x,y\in F}$.  
The computation of that determinant is detailed in the Appendix (Lemma \ref{detmatrix}). We obtain:
 \begin{equation}
\label{detGF}
\det(sI_{|F|}+\lambda^{(n)}(1-s)G^{(F)})=\Big(1-\frac{1-s}{\lambda^{(n)}+1}\Big)^{|F|-1}\Big(1+(1-s)\frac{|F|-1}{\lambda^{(n)}+1-|F|}\Big).
\end{equation}
Equality \eqref{detGF} can be extended to $s\in \RR$ and the determinant  is positive for every $0\leq s<\frac{\lambda^{(n)}}{|F|-1}$.
By  Markov's inequality, for every $\theta>0$,  
\begin{equation}\label{ineqmarkovapp}
\Pd\Big(\sum_{x\in F}N^{(\alpha_n,F)}_x\geq |F|\Big)\leq  \exp(-\psi_{F,n}(\theta))
\end{equation}
 where $\psi_{F,n}(\theta)=-\log\Big(\Ed\big(\exp(-\theta |F| +\theta\sum_{x\in F}N^{(\alpha_n, F)}_x )\big)\Big)$. 
The expression of the generating function of $\sum_{x\in F}N^{(\alpha_n,F)}_x$ shows that the value of $\psi_{F,n}(\theta)$ depends on $F$ only via $|F|$; we  denote it $\psi_{|F|,n}(\theta)$. The value of $\psi_{r,n}(\theta)$ is
\[\psi_{r,n}(\theta)=\theta r +\alpha_n\Big((r-1)\log\big(1+\frac{e^{\theta}-1}{\lambda^{(n)}+1}\big)+\log\big(1-(e^\theta-1)\frac{r-1}{\lambda^{(n)}+1-r}\big)\Big) \]
for every $\theta>0$ such that $e^\theta\leq \frac{\lambda^{(n)}}{r-1}$.  \\
Set $\beta_{r,n}=\frac{\lambda^{(n)}+1}{\sqrt{\alpha_n(r-1)}}$ and $\theta_{r,n}=\log(\beta_{r,n})$ for every integer $2\leq r\leq n$. 
We shall use  inequality \eqref{ineqmarkovapp} for $\theta=\theta_{r,n}$. Before, let us note that $\min(\beta_{r,n},\,r\in\enum{2}{n^{1-\delta}})$ converges to  $+\infty$ and $\max(\frac{\beta_{r,n}(r-1)}{\lambda^{(n)}},\, {r\in\enum{2}{n^{1-\delta}}})$ converges to $0$. 
Therefore, for sufficiently large values of $n$ and $r\in\enum{2}{n^{1-\delta}}$, the two conditions $\theta_{r,n}>0$ and $\exp(\theta_{r,n})\leq \frac{\lambda^{(n)}}{r-1}$ are satisfied.  
\\
It remains to bound from below $\psi_{r,n}(\theta_{r,n})$.
\begin{multline*}\psi_{r,n}(\theta_{r,n})=r\Big(\log(\lambda^{(n)}+1)-\frac{1}{2}\log(\alpha_n)-\frac{1}{2}\log(r-1)\Big) \\ +\alpha_n\Big((r-1)\log(1+\frac{\beta_{r,n}-1}{\lambda^{(n)}+1})+\log(1-(r-1)\frac{\beta_{r,n}-1}{\lambda^{(n)}+1-r})\Big).
\end{multline*}
 Using the classical lower bounds  
\[\log(1-t)\geq -t-t^2\;\text{for}\;0<t<1/2\;\text{and}\;\log(1+t)>t-\frac{t^2}{2}\;\text{for}\; t>0\] 
along with $\max\Big((r-1)\frac{\beta_{r,n}}{\lambda^{(n)}+1},\, {r\in\enum{2}{n^{1-\delta}}}\Big)\underset{n\rightarrow+\infty}{\rightarrow} 0$, we obtain  \[\psi_{r,n}(\theta_{r,n})\geq  \frac{r}{2}(\log(n)-\log(r-1)-I_{n,1})-I_{n,2}-I_{n,3}\]
where, for sufficiently large values of $n$
\begin{itemize}
\item $I_{n,1}=\log(1+\frac{\epsilon}{1-\epsilon})+\log(\log(n)+a+o(1))\leq 2\log(\log(n))$;
\item $I_{n,2}=\alpha_n(r-1)r\frac{\beta_{r,n}-1}{(\lambda^{(n)}+1)(\lambda^{(n)}+1-r)}\leq 2r\sqrt{\frac{r-1}{n}(\log(n)+a+o(1))}$;
\item  $I_{n,3}=\alpha_n(r-1)(\beta_{r,n}-1)^2\Big(\frac{1}{2(\lambda^{(n)}+1)^2}+\frac{r-1}{(\lambda^{(n)}+1-r)^2}\Big)\leq \frac{1}{2}+2(r-1)$.
\end{itemize}
Therefore there exists a constant $M$ such that for sufficiently large values of $n$ and for every $2\leq r\leq \frac{n}{2}$, 
\[\psi_{r,n}(\theta_{r,n})\geq  \frac{r}{2}\big(\log(n)-\log(r)-2\log(\log(n))-M\big).\] 
In particular for $2\leq r\leq n^{1-\delta}$, $\psi_{r,n}(\theta_{r,n})\geq  \delta\frac{r}{2}\log(n)-2\log(\log(n))-M$. 
This shows that for every $0<\bar{\delta}<\delta$ and sufficiently large values of $n$, $\min_{r\in \enum{2}{n^{1-\delta}}}\psi_{r,n}(\theta_{r,n})\geq \frac{\bar{\delta}}{2}r\log(n)$ which ends the proof.  \\
Let us note that a study of $\psi_{r,n}$ shows that for every $r\in \enum{2}{n^{1-\delta}}$, and sufficiently large values of $n$, $\psi_{r,n}$ has a maximum at a point which is equivalent to $\theta_{r,n}$ as $n$ tends to $+\infty$. 
\end{proof}

We can  now complete the proof of Theorem \ref{coaltime}.  Set $S_{r,n}=\sum_{F\in \Psub{n}{r}}q_{F,n}$. We shall prove that the upper bound of $\Pd(B_n)$, $\sum_{r=2}^{\lfloor n/2\rfloor }S_{r,n}$ converges to $0$ as $n$ tends to $+\infty$. 
By Lemma \ref{intersblock},   
\[\sum_{r=n^{1-\delta}}^{\frac{n}{2}} S_{r,n}\leq n\exp\Big(-\frac{1-\delta}{2}n^{1-\delta}\log(n)\Big).\]
Let us consider the case of subsets $F$ with  $r\leq n^{1-\delta}$ elements.  Using the notations introduced in Lemmas \ref{intersblock} and \ref{lengthloop}, we have
\[S_{r,n}\leq \frac{1}{\sqrt{r}}\exp\Big(-nf_n(\frac{r}{n})-\psi_{r,n}(\theta_{r,n})\Big).\] 
By Lemma \ref{lengthloop}, for every $\bar{\delta}\in ]0, \delta[$, sufficiently large $n$  and   $r\in\enum{2}{n^{1-\delta}}$,  
\[S_{r,n}\leq \frac{1}{\sqrt{r}}\exp\Big(-n\big(f_{n}(\frac{r}{n})+\frac{\bar{\delta}}{2}\frac{r}{n}\log(n)\big)\Big).\] 
Let us study the function $\bar{f}_n(x)=f_n(x)+\frac{\bar{\delta}}{2}x\log(n)$. 
By computations we obtain that $\bar{f}_n(2/n)$ is equivalent to $\frac{\bar{\delta}}{2n}\log(n)$ as $n$ tends to $+\infty$. The study of $f_n$ made in the proof of Lemma  \ref{intersblock} shows that, for sufficiently large values of $n$, $\bar{f}_n$ is greater than  $\bar{f}_n(2/n)$ in   $[2/n, 1/2]$. Let us introduce  a real $\bar{\bar{\delta}}$ such that $0<\bar{\bar{\delta}}<\bar{\delta}$. We have shown that for $n$ large enough and every $r\in\enum{2}{n^{1-\delta}}$,  $S_{r,n}\leq \frac{1}{\sqrt{r}}n^{-\frac{\bar{\bar{\delta}}}{2}}$ and thus 
\[\sum_{r=2}^{n^{1-\delta}}S_{r,n}\leq n^{1-\delta-\bar{\bar{\delta}}/2}.\]
By taking $\delta=3/4$ and $\bar{\bar{\delta}}=2/3$ for instance we obtain that for sufficiently large values of $n$, $P(B_n)\leq ne^{-\frac{1}{8}n^{1/4}\log(n)}+n^{-1/12}$. This ends the proof of Theorem \ref{coaltime}.
\appendix
\section{}
\begin{lem}
\label{permanent}
For a finite set $E$, let $\mathfrak{S}_E$  denote the set of permutations of $E$ and let  $\mathfrak{S}^{0}_{E}$ consist of permutations of $E$ without
fixed point. For two integers $r\geq 2$ and $1\leq k\leq r/2$, let  $\mathfrak{P}_{k}(2,r)$ denote the partitions of $\enu{r}$ with $k$ blocks, each of them having at least two elements. \\ 
 For a $r\times r$ matrix $A$ and a real $\alpha$, set 
\[\Per_{\alpha}^{0}(A)=\sum_{\sigma
\in\mathfrak{S}_{\enu{r}}^{0}}\alpha^{m(\sigma)}A_{1,\sigma(1)}\cdots A_{r,\sigma(r)}\] where  $m(\sigma)$ denotes the number of cycles in a permutation $\sigma$ ($\Per_{\alpha}^{0}(A)$ introduced in \cite{stfl} page 41, can also be defined as the $\alpha$-permanent of $A$ with the diagonal elements  of the matrix set to zero).\\
Another expression of $\Per_{r}^{0}(A)$ is 
\[\sum_{k=1}^{r/2}\alpha^{k}\sum_{(\pi_1,\ldots,\pi_k)\in \mathfrak{P}_k(2,r)}\prod_{j=1}^{k}\Big(\frac{1}{|\pi_j|}\sum_{\sigma\in \mathfrak{S}_{\pi_j}}A_{\sigma(1),\sigma(2)}A_{\sigma(2),\sigma(3)}\cdots A_{\sigma(|\pi_j|),\sigma(1)}\Big).\]
\end{lem}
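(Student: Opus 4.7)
The plan is to expand the first expression by partitioning $\mathfrak{S}^{0}_{\enu{r}}$ according to cycle type. Any $\sigma \in \mathfrak{S}^{0}_{\enu{r}}$ has no fixed point, so each of its cycles has length at least~$2$, and the supports of its cycles form a partition $\pi \in \mathfrak{P}_{k}(2,r)$ with $k=m(\sigma)$. Conversely, a partition $(\pi_{1},\ldots,\pi_{k}) \in \mathfrak{P}_{k}(2,r)$ together with a choice of cyclic order on each block $\pi_{j}$ uniquely determines an element of $\mathfrak{S}^{0}_{\enu{r}}$ with exactly $k$ cycles, and that correspondence is a bijection.

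Under this decomposition the product $A_{1,\sigma(1)}\cdots A_{r,\sigma(r)}$ factorises as a product, over the cycles of $\sigma$, of the corresponding cycle weights $A_{c_{1},c_{2}} A_{c_{2},c_{3}}\cdots A_{c_{\ell},c_{1}}$; these weights are well defined on cyclic orders since the product is invariant under cyclic rotation. So I would rewrite the first expression as
\[
\sum_{k=1}^{r/2}\alpha^{k}\sum_{(\pi_{1},\ldots,\pi_{k})\in\mathfrak{P}_{k}(2,r)}\prod_{j=1}^{k}\Bigl(\sum_{c\text{ cyclic order on }\pi_{j}} A_{c_{1},c_{2}}A_{c_{2},c_{3}}\cdots A_{c_{|\pi_{j}|},c_{1}}\Bigr),
\]
the exponent $\alpha^{k}$ coming from $m(\sigma)=k$.

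The final step converts the inner sum over cyclic orders into the symmetric sum that appears in the stated formula. A cyclic order on a block $B$ of size $\ell$ is represented by exactly $\ell$ bijections $\tau\colon\enu{\ell}\to B$ (the $\ell$ possible choices of starting point), each giving the same cycle product $A_{\tau(1),\tau(2)}\cdots A_{\tau(\ell),\tau(1)}$; dividing by $|B|$ corrects this overcounting. Identifying $\mathfrak{S}_{\pi_{j}}$ with the set of such bijections then yields the second expression. I do not anticipate any real obstacle: the argument is essentially the standard ``cycle-type'' expansion of a permanent, and the only mildly delicate point is the notational identification of $\mathfrak{S}_{\pi_{j}}$ with bijections $\enu{|\pi_{j}|}\to\pi_{j}$, which is what is needed for the symbols $\sigma(1),\sigma(2),\ldots$ in the target formula to make sense.
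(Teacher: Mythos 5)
Your proposal is correct and follows essentially the same route as the paper: first decompose a fixed-point-free permutation into its cycles, giving the sum over partitions into blocks of size at least two weighted by $\alpha^{m(\sigma)}=\alpha^{k}$, and then observe that each cyclic order on a block of size $\ell$ is represented by exactly $\ell$ linear listings (choices of starting point), which accounts for the factor $1/|\pi_j|$ in front of the symmetric sum. The paper just makes this last counting explicit via a map $F$ from permutations to cycles with exactly $\ell$ preimages, which is the same argument as your rotation count.
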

\begin{proof}
 For a finite set $B$, let $\mathfrak{S}^{c}_{B}$ be the subset of $\mathfrak{S}_{B}$ which consists of cycles of length $|B|$.
The decomposition of permutations into disjoint cycles  entails that
\[\Per_{r}^{0}(A)=\sum_{k=1}^{r/2}\alpha^{k}\sum_{\pi=(\pi_1,\ldots,\pi_k)\in \mathfrak{P}_k(2,r)}\prod_{j=1}^{k}\Big(\sum_{\sigma_j\in \mathfrak{S}^{c}_{\pi_j}}\prod_{u\in\pi_j}A_{u,\sigma_j(u)}\Big).\]
Therefore, it remains to prove that for every $k\geq 2$
\begin{equation}
\label{eqcycle}
\sum_{\sigma\in \mathfrak{S}^{c}_{\enu{k}}}\prod_{u=1}^{k}A_{u,\sigma(u)}=\frac{1}{k}\sum_{\nu\in \mathfrak{S}_{\enu{k}}}A_{\nu(1),\nu(2)}A_{\nu(2),\nu(3)}\ldots A_{\nu(k),\nu(1)}. 
\end{equation}
Starting from a permutation $\nu\in\mathfrak{S}_{\enu{k}}$, we define a cycle of length $k$, $F(\nu)$ such that  \[\prod_{i=1}^{k}A_{i,F(\nu)(i)}=A_{\nu(1),\nu(2)}A_{\nu(2),\nu(3)}\ldots A_{\nu(k),\nu(1)}
\] by setting
\[
F(\nu)(u)=\left\{\begin{array}{ll}\nu(1)&\;\text{if}\;\nu^{-1}(u)=k\\
\nu(\nu^{-1}(u)+1)&\;\text{if}\;\nu^{-1}(u)\leq k-1.
\end{array}\right.
\]
Conversely, starting from a cycle $\sigma$ of length $k$, we can construct exactly $k$ different permutations 
$\nu_1$, \ldots, $\nu_k$ such that $F(\nu_1)=\ldots =F(\nu_k)=\sigma$ by setting  
 \[\nu_i(j)=\left\{\begin{array}{ll}i &\;\text{if}\; j=1\\
\sigma^{j-1}(i)   &\;\text{if}\; j\in\{2,\ldots,k\}
  \end{array}\right.\]
This shows equality \eqref{eqcycle}, completing the proof. 
\end{proof}
\begin{lem} \label{minfunct}
Let $a<b$ be two reals. 
Let $f$ be a real function of class $C^2$ in the interval $]a, b]$. Assume that:
\begin{itemize}
 \item $\lim_{x\rightarrow a^{+}}f(x)=0$,  $f(b)>0$, 
\item $\lim_{x\rightarrow a^{+}}f'(x)<0$,  $f'(b)\leq 0$, 
\item $f''=0$ has at most two solutions in $]a, b]$.
\end{itemize}
Then for every $c\in]a, b]$ such that $f(c)>0$, the minimum of $f$ on the interval $[c, b]$ lies on $c$ or~$b$. 
\end{lem}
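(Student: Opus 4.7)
The plan is to use the constraint on $f''$ to pin down the qualitative shape of $f$, namely that $f$ has a single "valley–hill" structure: it decreases from $0$ to a strictly negative local minimum, then increases through a single zero to a strictly positive local maximum, then decreases back down to $f(b)>0$. Once this shape is established, any $c$ with $f(c)>0$ must lie past the unique zero of $f$, so on $[c,b]$ the function $f$ is monotone on two pieces whose common point is an interior maximum, forcing the minimum to sit at an endpoint.

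First I would translate the hypothesis "$f''=0$ has at most two solutions" into the statement that $f'$ is piecewise monotone with at most three monotone pieces. As a warm-up, rule out the case where $f''$ has no zero: then $f'$ would be monotone, and together with $f'(a^+)<0$ and $f'(b)\le 0$ this would force $f'\le 0$ throughout, contradicting $f(b)>0=\lim_{x\to a^+}f(x)$. So $f''$ has one or two zeros. I would then run a short case analysis on the sign patterns of $f''$ on the (two or three) intervals determined by its zeros. In each case, using $f'(a^+)<0$, $f'(b)\le 0$, and the fact that $f'$ must take a positive value somewhere (because $\int_a^b f'=f(b)>0$), show that $f'$ has precisely two sign changes, at points $y_1<y_2$ in $]a,b]$, with $f'<0$ on $]a,y_1[$, $f'>0$ on $]y_1,y_2[$, and $f'\le 0$ on $]y_2,b]$.

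From this $f'$-profile, $f$ has a strict local minimum at $y_1$ with $f(y_1)<0$ (since $f(a^+)=0$ and $f$ is strictly decreasing on $]a,y_1[$) and a local maximum at $y_2$ with $f(y_2)>0$ (since $f(b)>0$ and $f$ is non-increasing on $[y_2,b]$). By the intermediate value theorem there is a unique $\zeta\in ]y_1,y_2[$ with $f(\zeta)=0$, and $f<0$ on $]a,\zeta[$ while $f>0$ on $]\zeta,b]$. Consequently, any $c\in]a,b]$ with $f(c)>0$ satisfies $c>\zeta$, so $c\in]\zeta,b]$. On $[c,b]$, the function $f$ is non-decreasing on $[c,\min(y_2,b)]$ and non-increasing on $[y_2,b]$ (one of these pieces may be empty, when $c\ge y_2$). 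In either situation the minimum of $f$ on $[c,b]$ is attained at $c$ or at $b$.

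The main technical obstacle is Step 2: handling each of the four possible sign patterns of $f''$ (one zero with sign $-,+$ or $+,-$; two zeros with signs $-,+,-$ or $+,-,+$) uniformly. The key observation that makes each case short is that any pattern ending with $f''\le 0$ at $b$ together with $f'(b)\le 0$ forces $f'\le 0$ on the final monotone piece of $f'$, while any earlier monotone piece on which $f'$ becomes positive must be preceded by a sign change from negative. After verifying these, the rest of the proof is immediate.
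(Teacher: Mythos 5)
Your route is essentially the paper's. The paper's own proof simply asserts that the hypotheses leave only two configurations (either $f$ decreases then increases, or it decreases, increases, then decreases) and reads off that $\inf_{[c,b]}f=\min(f(c),f(b))$; your claimed profile for $f'$ (negative on $]a,y_1[$, positive on $]y_1,y_2[$, nonpositive on $]y_2,b]$, with $y_2=b$ allowed) is exactly that shape, and your endgame via the unique zero $\zeta$ of $f$ is the same conclusion. So you are not taking a different route, you are supplying the case analysis on $f''$ that the paper leaves implicit, and that intermediate claim about the profile of $f'$ is indeed correct.

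Two points in your sketch of the case analysis need repair, though. First, your ``key observation'' has the inequality reversed: on a final monotone piece where $f''\le 0$, $f'$ is non-increasing, so $f'(b)\le 0$ only yields $f'\ge f'(b)$ there and in no way forces $f'\le 0$ on that piece --- indeed a terminal piece with $f''\le 0$ on which $f'$ starts positive and crosses down through $0$ is precisely the paper's configuration (ii), which must be allowed. The observation you actually need (and already use correctly in your warm-up) is the opposite one: on a final piece where $f''\ge 0$, $f'$ is non-decreasing, hence $f'\le f'(b)\le 0$ throughout it; combined with $f'<0$ near $a$ and the fact that $f'\le 0$ everywhere would give $f(b)\le\lim_{x\to a^{+}}f(x)=0$, this is what eliminates the patterns in which $f'$ would need a third sign change or none at all. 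Second, your enumeration of cases is incomplete: the hypothesis bounds the number of zeros of $f''$ but does not force $f''$ to change sign at them, so with two zeros you must also admit non-alternating patterns such as $(+,+,-)$, $(-,-,+)$, $(+,+,+)$, etc.; these collapse to the one- or zero-interior-zero situations, so nothing breaks, but they are not among the four patterns you list. A last minor point: in the intended application $\lim_{x\to a^{+}}f'(x)=-\infty$ can occur, so justify ``$f'$ is positive somewhere'' by the monotonicity argument just recalled rather than by writing $\int_a^b f'=f(b)$. With these corrections your argument is complete and matches the paper's.
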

 \begin{proof}
The assumptions on $f$ ensure that the only possible configurations are:
\begin{itemize}
 \item[(i)] there exists $\rho\in]a, b[$ such that $f$ is a decreasing function on $]a, \rho[$ and an increasing function on $[\rho, b]$;
\item[(ii)] there exists two reals $\rho_{-}< \rho_{+}$ in $]a, b[$ such that $f$ is a decreasing function on $]a, \rho_{-}[$, an increasing function on $[\rho_{-}, \rho_{+}]$ and a decreasing function on $[\rho_{+}, b]$. 
\end{itemize}
Since $\lim_{x\rightarrow a^{+}}f(x)=0$, for every $x\in]a, b[$ such that $f(x)>0$, we have in both cases  ${\underset{u\in[x, b]}{\inf}f(u)=\min(f(x),f(b))}$. 
\end{proof}
\begin{lem}\label{detmatrix}
 Let $a$ and $b$ be two reals and let $n$ be a positive integer. Let $I_n$ denote the identity matrix of size $n$ and let $J_n$ denote the $n\times n$  matrix with all entries equal to one.  The determinant of the matrix  $M_n=bJ_{n}+(a-b)I_{n}$ is $(a-b)^{n-1}(a+(n-1)b)$.  
\end{lem}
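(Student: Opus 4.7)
My plan is to compute $\det(M_n)$ by exploiting the rank-one structure of $J_n$. Since $J_n = \mathbf{1}\mathbf{1}^T$ where $\mathbf{1} = (1,\ldots,1)^T$, the matrix $J_n$ has $n$ as a simple eigenvalue (with eigenvector $\mathbf{1}$) and $0$ as an eigenvalue of multiplicity $n-1$ (the eigenspace being the hyperplane $\{v : \sum_i v_i = 0\}$). Consequently, the eigenvalues of $M_n = bJ_n + (a-b)I_n$ are $bn + (a-b) = a + (n-1)b$ with multiplicity $1$ and $(a-b)$ with multiplicity $n-1$. Taking the product gives $\det(M_n) = (a-b)^{n-1}(a + (n-1)b)$.

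A fully elementary alternative, which I would actually write out in the paper since it avoids invoking eigenvalue theory, proceeds by column and row operations. First, add columns $2, \ldots, n$ to column $1$: every entry of the new first column becomes $a + (n-1)b$, and this factor can be pulled out of the determinant. The resulting matrix has first column $(1,1,\ldots,1)^T$ and is otherwise unchanged, so its first row is $(1, b, b, \ldots, b)$ and, for $i \geq 2$, its $i$-th row is $(1, b, \ldots, b, a, b, \ldots, b)$ with $a$ in position $i$. Next, subtract row $1$ from each of rows $2, \ldots, n$: each such row becomes $(0, 0, \ldots, 0, a-b, 0, \ldots, 0)$ with $a-b$ in the $i$-th position. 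The matrix is now upper triangular with diagonal entries $1, a-b, a-b, \ldots, a-b$, and its determinant equals $(a-b)^{n-1}$. Multiplying by the factor pulled out gives the claim.

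There is no real obstacle here: both approaches are routine and of a few lines each. The only thing to note is that the formula holds without any restriction on $a, b$ (including the degenerate case $a = b$, where both sides vanish for $n \geq 2$, and $a = -(n-1)b$, where the matrix is singular and both sides equal $0$). I would present the row/column operation argument as the official proof, since it keeps the appendix self-contained and does not require the reader to consult outside facts about rank-one perturbations.
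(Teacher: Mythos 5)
Your proof is correct, and both of your arguments are genuinely different from the paper's. The paper proceeds by induction on $n$: it subtracts column $n-1$ from column $n$, expands along that column, and obtains a coupled recursion $\det(M_n)=(a-b)\det(M_{n-1})+(a-b)\det(R_{n-1})$ involving an auxiliary matrix $R_n$ (the matrix $M_n$ with one diagonal entry replaced by $b$), whose determinant is computed by a second, simpler recursion. Your ``official'' argument instead reduces $M_n$ to triangular form in one pass --- add columns $2,\ldots,n$ to column $1$, pull out the common factor $a+(n-1)b$ by multilinearity, then subtract row $1$ from the others --- which yields the formula directly, with no induction and no auxiliary matrix; your checks of the reduction and of the degenerate cases $a=b$ and $a=-(n-1)b$ are all sound (and multilinearity handles the factoring even when $a+(n-1)b=0$). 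The eigenvalue argument via the rank-one structure $J_n=\mathbf{1}\mathbf{1}^{T}$ is a third correct route, shortest of all but resting on spectral facts rather than elementary manipulations. What the paper's induction buys is only that it stays within the most basic determinant rules (cofactor expansion); your direct reduction is arguably cleaner and equally self-contained, so either version would serve the appendix.
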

\begin{proof}
First, $\det(M_1)=a$. Let $n\geq 2$. 
 If we substract the column $n-1$ to the column $n$ and expand the determinant along the  column $n$, we obtain: \\ $\det(M_n)=(a-b)\det(M_{n-1})+(a-b)\det(R_{n-1})$ where $R_{n}$ denotes the matrix defined by \[R_{n}=\left\{\begin{array}{ll}b &\quad\text{if}\quad n=1   \\                    
b J_n +\diag(a-b,\ldots,a-b,0)&\quad\text{if}\quad n\geq 2.\\
\end{array}
\right.\] 
 By  applying the same transformations to $R_n$, we obtain 
\[\det(R_n)=(a-b)\det(R_{n-1})=(a-b)^{n-1}b.\] The expression of $\det(M_n)$ follows by induction on $n$.  
\end{proof}

\end{document}